\newtheorem{theorem}{Theorem}[section]
\newtheorem{lemma}[theorem]{Lemma}
\newtheorem{corollary}[theorem]{Corollary}
\newtheorem{proposition}[theorem]{Proposition}
\newtheorem{definition}[theorem]{Definition}
\newtheorem{example}[theorem]{Example}
\newtheorem{remark}[theorem]{Remark}
\newtheorem{conjecture}[theorem]{Conjecture}
\newcommand{\R}{\mathbb R}
\newcommand{\Q}{\mathbb Q}
\newcommand{\F}{\mathbb F}
\newcommand{\T}{\mathcal{T}}
\begin{document}

\title[Lipschitz Geometry of Pairs of Normally Embedded H\"older triangles
 ]{Lipschitz Geometry of Pairs of Normally Embedded H\"older triangles}

\author[]{Lev Birbrair*}\thanks{*Research supported under CNPq 302655/2014-0 grant and by Capes-Cofecub}
\address{Departamento de Matem\'atica, Universidade Federal do Cear\'a
(UFC), Campus do Pici, Bloco 914, Cep. 60455-760. Fortaleza-Ce,
Brasil and Institute of Mathematics, Jagiellonian University, Profesora Stanisława Łojasiewicza 6, 30-348 Kraków, Poland  } \email{lev.birbrair@gmail.com}

\author[]{Andrei Gabrielov}
\address{Department of Mathematics, Purdue University,
West Lafayette, IN 47907, USA}\email{gabriea@purdue.edu}

\date{\today}

\keywords{width}
\subjclass[2010]{51F30, 14P10, 03C64}
\maketitle

\section{Introduction}
The question of bi-Lipschitz classification of semialgebraic surfaces has become in recent years one of the central questions of Metric Geometry of Singularities. There are two natural structures of a metric space on a connected semialgebraic set $X\subset\R^n$. The first one is the inner distance, the length of a minimal path in $X$ connecting two points. The second one is the outer distance, defined as the distance in $\R^n$ between two points of $X$.
A germ $X$ is called \emph{normally embedded} (see \cite{birbrair2000normal}) if its inner and outer metrics are equivalent.
There are three natural equivalence relations associated with these distances. Two sets $X$ and $Y$ are inner (resp., outer) equivalent if there is a inner (resp., outer) bi-Lipschitz homeomorphism $h:X\to Y$. The sets $X$ and $Y$ are ambient bi-Lipschitz equivalent if the homeomorphism $h:X\to Y$ can be extended to a bi-Lipschitz homeomorphism $H$ of the ambient space. The ambient equivalence is stronger than the outer equivalence, and the outer equivalence is stronger then the inner equivalence.  Finiteness theorems of Mostowski and Valette (see \cite{Mostowski} and  \cite{valette2005Lip}) show that there are finitely many ambient bi-Lipschitz equivalence classes in any semialgebraic family of semialgebraic sets.

The paper \cite{birbrair1999} of the first author presents a complete bi-Lipschitz classification of semialgebraic surface germs at the origin with respect to the inner metric.
It is based on a canonical partition of a surface germ into H\"older triangles and isolated arcs.
The exponents of these triangles, and the combinatorics of the graph defined by their links, constitute a complete inner Lipschitz invariant.

The outer Lipschitz geometry of semialgebraic surface germs is considerably more complicated, and their outer bi-Lipschitz classification is still work in progress. An important step towards such classification was made in \cite{birbrair2014lipschitz}, where classification of the germs at the origin of $\R^2$ of semialgebraic (or, more generally, definable in a polynomially bounded o-minimal structure) Lipschitz functions with respect to contact Lipschitz equivalence relation was suggested.
It was based on a complete combinatorial invariant of contact Lipschitz equivalence, called \emph{pizza}.

Another important step was made in \cite{GS}, where an ``abnormal'' semialgebraic surface germ was canonically partitioned into normally embedded H\"older triangles. Several constructions and results from \cite{GS} are used in the present paper.

Normally embedded H\"older triangles are the simplest ``building blocks'' of semialgebraic surface germs: the only Lipschitz invariant of a normally embedded H\"older triangle is its exponent.
In the present paper we consider the next, a little bit more complicated, case of a \emph{pair} of normally embedded H\"older triangles: a surface germ $X=T\cup T'$ which is the union of two normally embedded H\"older triangles $T$ and $T'$.
Let $f:T\to\R$ and $g:T'\to\R$ be the distances from the points in one of these two triangles to the other one.
The pizzas of $f$ and $g$, being contact Lipschitz invariants of these two functions, are outer Lipschitz invariants of $X$.
The first question is whether $X$ is outer bi-Lipschitz equivalent to the union of $T$ and the graph of the distance function $f$.
Simple examples (see Fig.~\ref{fig:non-elementary}) show that the answer may be negative.
Another natural question is whether the pizzas of $f$ and $g$ are equivalent. The answer, in general, is again negative.
We show (see Theorem \ref{theorem:very-elementary}) that the answers to both questions are positive if the pair $(T,T')$ is \emph{elementary} (see Definition \ref{def: Elementary Holder triangle}) and satisfies boundary conditions (\ref{tord-tord}).
The conditions (\ref{tord-tord}) appear naturally in the paper \cite{GS}, where some standard building blocks (\emph{clusters}) are defined in the link of a singular surface. Any two H\"older triangles in a cluster satisfy (\ref{tord-tord}).
Although a pair $X$ satisfying (\ref{tord-tord}) is simpler than the general pair of normally embedded H\"older triangles, its outer Lipschitz geometry is still rather complicated.
If one considers a pair $X=T\cup T'$ of two normally embedded H\"older triangles, such that $T'$ is a graph of a Lipschitz function defined on $T$, then $X$ automatically satisfies the condition (\ref{tord-tord}). A natural question is whether the opposite is true. Suppose that a pair
$X=T\cup T'$ of normally embedded H\"older triangles satisfies (\ref{tord-tord}). Is it true that $X$ is outer Lipschitz equivalent to the union of $T$ and the graph of a function $f$ defined on $T$? The answer is negative, and we present several examples when this is not true (see Section \ref{the-invariant}).
In this paper we define an outer Lipschitz invariant of a pair of normally embedded H\"older triangles satisfying (\ref{tord-tord}), called \emph{$\sigma\tau$-pizza}, and conjecture that it is a complete invariant: all
pairs with the same $\sigma\tau$-pizza should be outer bi-Lipschitz equivalent.

In Section \ref{prelim} we give basic definitions and reformulate the pizza invariant in the language of \emph{zones} (see Definition \ref{zone}).

In Section \ref{Section:LNE triangles} we establish properties of elementary pairs of H\"older triangles and give examples
of non-elementary pairs for which these properties fail.
We also discuss conditions satisfied by a surface germ $X=T\cup T'$ equivalent to the union of a H\"older triangle $T$ and the graph of the distance function $f$ defined on $T$.

In Section \ref{the-invariant} the $\sigma\tau$-pizza is defined.
The main result of the section, Theorem \ref{invariant}, states that it is an outer Lipschitz invariant of a pair of normally embedded H\"older triangles satisfying (\ref{tord-tord}): the $\sigma\tau$-pizzas of outer bi-Lipschitz equivalent pairs are combinatorially equivalent.
We conjecture that the converse of Theorem \ref{invariant} is also true, but the proof needs some additional work.

Some remarks about the figures. Since it is practically impossible to adequately show outer Lipschitz geometry of a surface germ in a plot, we draw instead its link (intersection with a small sphere centered at the singular point) indicating higher tangency orders by smaller Euclidean distances. We hope these plots will help to create geometric intuition.

\section{Preliminaries}\label{prelim}

All sets, functions and maps in this paper are germs at the origin of $\R^n$ definable in a polynomially bounded o-minimal structure over $\R$ with the field of exponents $\F$.
The simplest (and most important in applications) examples of such structures are real semialgebraic and (global) subanalytic sets, with ${\F}={\Q}$.

\begin{definition}\label{metrics}\normalfont
A germ $X$ at the origin inherits two metrics from the ambient space: the \emph{inner metric} where the distance between two points of
$X$ is the length of the shortest path connecting them inside $X$, and the \emph{outer metric} with the distance between two points of
$X$ being their distance in the ambient space.
A germ $X$ is \emph{normally embedded} if its inner and outer metrics are equivalent.

For a point $x\in X$ and a subset $Y\subset X$ we define the \emph{outer distance} $dist(x,Y)=\inf_{y\in Y}|x-y|$, and the \emph{inner distance} $idist(x,Y)$ as the infimum of the lengths of paths connecting $x$ with points $y\in Y$.

A \emph{surface germ} is a closed germ $X$ such that $\dim_\R X=2$, and it is pure dimensional.
\end{definition}

\begin{definition}\label{arc}\normalfont
An \emph{arc} in $\R^n$ is (a germ at the origin of) a mapping $\gamma:[0,\epsilon)\rightarrow \R^n$ such that $\gamma(0)=0$.
Unless otherwise specified, we suppose that arcs are parameterized by the distance to the origin, i.e., $|\gamma(t)|=t$.
We usually identify an arc $\gamma$ with its image in $\R^n$.
The \emph{Valette link} of $X$ is the set $V(X)$ of all arcs $\gamma\subset X$.
\end{definition}

\begin{definition}\label{ordonarc}
\emph{Let $f\not\equiv 0$ be (a germ at the origin of) a function defined on an arc $\gamma$.
The \emph{order} of $f$ on $\gamma$ is the value $q=ord_\gamma f\in\F$ such that $f(\gamma(t))=c t^q+o(t^q)$ as $t\to 0$, where $c\ne 0$.
If $f\equiv 0$ on $\gamma$, we set $ord_\gamma f=\infty$.}
\end{definition}

\begin{definition}\label{tord}\normalfont
The \emph{tangency order} of two arcs $\gamma$ and $\gamma'$ is defined as  $tord(\gamma,\gamma')=ord_{\gamma}
|\gamma(t)-\gamma'(t)|$.
The tangency order of an arc $\gamma$ and a set of arcs $Z\subset V(X)$  is defined as
$tord(\gamma,Z)=\sup_{\lambda\in Z} tord(\gamma,\lambda)$.
The tangency order of two subsets $Z$ and $Z'$ of $V(X)$ is defined as $tord(Z,Z')=\sup_{\gamma\in Z} tord(\gamma,Z')$.
Similarly, $itord(\gamma,\gamma')$, $itord(\gamma,Z)$ and $itord(Z,Z')$ denote the tangency orders with respect to the inner metric.
If $T$ is a H\"older triangle and $\gamma$ is an arc we are going to use the notation $tord(\gamma, T)$ instead of $tord(\gamma, V(T))$ and $itord(\gamma, T)$ instead of $itord(\gamma, V(T))$.

The tangency order defines a non-Archimedean metric on the set of arcs: if $tord(\gamma,\gamma')>tord(\gamma,\gamma'')$ then
$tord(\gamma',\gamma'')=tord(\gamma,\gamma'')$.
\end{definition}

\begin{remark} \normalfont
 The inner metric on a semialgebraic set is bi-Lipschitz equivalent to a semialgebraic metric (so-called pancake metric, see the theorem of Kurdyka and Orro \cite{kur-orr} and also \cite{birbrair2000normal}). The inner order of tangency of two arks $itord(\gamma_1,\gamma_2))$ is also defined in \cite{birbrair-mendes}.
\end{remark}
\begin{definition}\label{standard holder}
	\normalfont For $\beta \in \F$, $\beta \ge 1$, the \emph{standard $\beta$-H\"older triangle} is (a germ at the origin of) the set
	\begin{equation}\label{Formula:Standard Holder triangle}
	T_\beta = \{(x,y)\in \R^2 \mid x\ge 0, \; 0\le y \le x^\beta\}.
	\end{equation}
	The curves $\{x\ge 0,\; y=0\}$ and $\{x\ge 0,\; y=x^\beta\}$ are the \emph{boundary arcs} of $T_\beta$.
\end{definition}

\begin{definition}\label{holder}\normalfont
A $\beta$-\emph{H\"older triangle} is (a germ at the origin of) a set $T \subset \R^n$ that is inner bi-Lipschitz homeomorphic
to the standard $\beta$-H\"older triangle (\ref{Formula:Standard Holder triangle}).
The number $\beta=\mu(T) \in \F$ is called the \emph{exponent} of $T$.
The arcs $\gamma_1$ and $\gamma_2$ of $T$ mapped to the boundary arcs of $T_\beta$ by an inner bi-Lipschitz homeomorphism are the
\emph{boundary arcs} of $T$ (notation $T=T(\gamma_1,\gamma_2)$).
All other arcs of $T$ are its \emph{interior arcs}. The set of interior arcs of $T$ is denoted by $I(T)$.
An arc $\gamma\subset T$ is \emph{generic} if $itord(\gamma,\gamma_1)=itord(\gamma,\gamma_2)$.
The set of generic arcs of $T$ is denoted by $G(T)$.
\end{definition}

\begin{definition}\label{singulararc}
\normalfont Let $X$ be a surface germ.
An arc $\gamma\in V(X)$ is \emph{Lipschitz non-singular} if there exists a normally embedded H\"older triangle
$T\subset X$ such that $\gamma\in I(T)$ and $\gamma\not\subset\overline{X\setminus T}$.
Otherwise, $\gamma$ is \emph{Lipschitz singular}.
A H\"older triangle $T$ is \emph{non-singular} if any arc $\gamma\in I(T)$ is Lipschitz non-singular.
\end{definition}

\begin{definition}\label{def:Qf}
	\emph{For a Lipschitz function $f$ defined on a H\"older triangle $T$, let
	\begin{equation}\label{eqn:Qf}
	Q_{f}(T)=\bigcup_{\gamma\in V(T)}ord_{\gamma} f.
	\end{equation}
	It was shown in \cite{birbrair2014lipschitz} that $Q_f(T)$ is either a point or a closed interval in $\mathbb{F}\cup \{\infty\}$.}
\end{definition}

\begin{definition}\label{def: Elementary Holder triangle}
	\emph{A H\"older triangle $T$ is \emph{elementary} with respect to a Lipschitz function $f$ if, for any $q\in Q_f(T)$ and any two arcs
$\gamma$ and $\gamma'$ in $T$ such that $ord_{\gamma}f=ord_{\gamma'}f=q$, the order of $f$ is $q$ on any arc in the H\"older triangle
$T(\gamma,\gamma')\subset T$.}
\end{definition}

\begin{remark}\normalfont
Examples 4.4, 4.5, 4.6 in \cite{birbrair2014lipschitz} make the definition \ref{def: Elementary Holder triangle} more clear.
\end{remark}

\begin{definition}\label{def:width function}\normalfont Let $T$ be a H\"older triangle and $f$ a Lipschitz function defined on $T$.
For each arc $\gamma \subset T$, the \emph{width} $\mu_T(\gamma,f)$ of $\gamma$ with respect to $f$ is the infimum of exponents of H\"older triangles $T'\subset T$ containing $\gamma$ such that $Q_f(T')$ is a point.
For $q\in Q_f(T)$ let $\mu_{T,f}(q)$ be the set of exponents $\mu_T(\gamma,f)$, where $\gamma$ is any arc in $T$ such that $ord_{\gamma}f=q$.
It was shown in \cite{birbrair2014lipschitz} that, for each $q\in Q_f(T)$, the set $\mu_{T,f}(q)$ is finite.
This defines a multivalued \emph{width function} $\mu_{T,f}: Q_f(T)\to \F\cup \{\infty\}$.
If $T$ is an elementary H\"older triangle with respect to $f$ then the function $\mu_{T,f}$ is single valued.
When $f$ is fixed, we write $\mu_T(\gamma)$ and $\mu_T$ instead of $\mu_T(\gamma,f)$ and $\mu_{T,f}$.

The \emph{depth} $\nu_T(\gamma,f)$ of an arc $\gamma$ with respect to $f$
is the infimum of exponents of H\"older triangles $T'\subset T$ such that $\gamma\in G(T')$ and $Q_f(T')$ is a point.
By definition, $\nu_T(\gamma,f)=\infty$ when there are no such triangles $T'$.
\end{definition}

\begin{definition}\label{def: pizza slice}
	\emph{Let $T$ be a non-singular H\"older triangle and $f$ a Lipschitz function defined on $T$.
We say that $T$ is a \emph{pizza slice} associated with
$f$ if it is elementary with respect to $f$ and, unless $Q_f(T)$ is a point, $\mu_{T,f}(q)=aq+b$ is an affine function on $Q_f(T)$.
If $T$ is a pizza slice such that $Q_f(T)$ is not a point, then the \emph{supporting arc}
$\tilde\gamma$ of $T$ with respect to $f$ is the boundary arc of $T$ such that $\mu_T(\tilde\gamma,f)=\max_{q\in Q_f(T)}\mu_{T,f}(q)$.}
\end{definition}

\begin{proposition}\label{prop:width function properties}
	\emph{(See \cite{birbrair2014lipschitz}.)} Let $T$ be a $\beta$-H\"older triangle which is a pizza slice associated with a non-negative Lipschitz function $f$, such that $Q=Q_f(T)$ is not a point. Then $\mu_T\not\equiv \mathrm{const}$ and the following holds:\newline
\emph{(1)} $\beta\le\mu_T(q)\le \max(q,\beta)$ for all $q \in Q$,\newline
\emph{(2)} $\mu_T(\gamma)=\beta$ for $\gamma \in G(T)$,\newline
\emph{(3)} If $\tilde\gamma$ is the supporting arc of $T$ with respect to $f$,
		then $\mu_T(\gamma)=itord(\tilde\gamma,\gamma)$ for all arcs $\gamma\subset T$ such that $\mu_T(\gamma)<\mu_T(\tilde\gamma)$.
\end{proposition}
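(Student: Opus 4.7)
The plan is to exploit two rigid properties of a pizza slice: the elementary hypothesis forces each level set $L_q = \{\gamma \in V(T) : ord_\gamma f = q\}$ to be ``convex'' in the arc ordering (any arc in a subtriangle whose boundary arcs lie in $L_q$ also lies in $L_q$), and the width function $\mu_{T,f}(q) = aq + b$ is affine on $Q$. After reducing via the inner bi-Lipschitz isomorphism $T \simeq T_\beta$, so that arc combinatorics and the ultrametric on $itord$ are transparent, the lower bound $\mu_T(\gamma) \ge \beta$ in (1) is immediate: any H\"older subtriangle $T' \subset T$ is bounded by two arcs of $T$ whose inner tangency order is at least $\beta$, so the exponent of $T'$ is at least $\beta$.

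For (2), given a generic arc $\gamma \in G(T)$ with $q = ord_\gamma f$, I pick arcs $\gamma_-, \gamma_+ \subset T$ on either side of $\gamma$ satisfying $itord(\gamma, \gamma_\pm) = \beta$ and $ord_{\gamma_\pm} f = q$; genericity of $\gamma$ together with the ultrametric ensures such arcs exist. The elementary property then forces $ord_{\gamma'} f = q$ on every arc $\gamma'$ of the $\beta$-subtriangle $T(\gamma_-, \gamma_+)$, giving $\mu_T(\gamma) \le \beta$. The upper bound $\mu_T(q) \le \max(q,\beta)$ in (1) uses the same construction with $\beta$ replaced by $\max(q, \beta)$: when $q \le \beta$, any $\beta$-subtriangle through $\gamma$ keeps the order of $f$ equal to $q$ by a Lipschitz estimate on outer distances controlled by $itord$; when $q > \beta$, one builds a $q$-subtriangle from arcs in $L_q$ at inner tangency $q$ with $\gamma$, invoking elementariness for the constancy of $Q_f$.

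Part (3) is the heart of the proof. Set $\alpha = itord(\tilde\gamma, \gamma)$ and $q = ord_\gamma f$. For $\mu_T(\gamma) \le \alpha$, I construct an $\alpha$-subtriangle through $\gamma$ with $Q_f = \{q\}$ but not containing $\tilde\gamma$: boundary arcs are chosen in $L_q$ at inner tangency $\alpha$ from $\gamma$, with the one on the $\tilde\gamma$-side close enough that $\tilde\gamma$ lies beyond it. For the lower bound $\mu_T(\gamma) \ge \alpha$, I first observe that no subtriangle $T' \ni \gamma$ with $Q_f(T')$ a single point can contain $\tilde\gamma$: if it did, then $ord_{\tilde\gamma} f = ord_\gamma f$ and single-valuedness of the width function in elementary triangles would give $\mu_T(\tilde\gamma) = \mu_T(\gamma)$, contradicting $\mu_T(\gamma) < \mu_T(\tilde\gamma)$. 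Hence the boundary arcs of $T'$ separate $\gamma$ from $\tilde\gamma$, and the ultrametric property applied to the relevant triples of arcs yields the exponent bound.

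The main obstacle is tightening the ultrametric argument in the lower bound of (3): the naive inequality only controls $itord(\gamma, \eta)$, where $\eta$ is the boundary arc of $T'$ separating $\gamma$ from $\tilde\gamma$, and not $itord(\gamma, \eta')$ for the opposite boundary $\eta'$. The resolution is to identify $\mu_T(\gamma)$ with $itord(\eta_-^q, \eta_+^q)$, where $\eta_\pm^q$ are the outermost arcs of the convex arc-set $L_q$, and then use the affine form of $\mu_{T,f}$ together with the extremal position of $\tilde\gamma$ as a boundary arc of $L_{\tilde q}$ with maximal width to conclude $itord(\eta_-^q, \eta_+^q) = itord(\tilde\gamma, \gamma)$. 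Non-constancy of $\mu_T$ then follows immediately: the supporting arc has strictly greater width than any generic arc, whose width is $\beta$ by (2).
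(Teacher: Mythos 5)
The paper itself offers no proof of this proposition---it is quoted from \cite{birbrair2014lipschitz}---so your attempt can only be judged on its own terms, and as it stands it is circular at exactly the steps that carry the content. In (2), and again in the upper bound of (3), you ``choose boundary arcs in $L_q$ at inner tangency $\beta$ (resp.\ $\alpha=itord(\tilde\gamma,\gamma)$) from $\gamma$'' and justify their existence by ``genericity and the ultrametric.'' Genericity of $\gamma$ only constrains $itord(\gamma,\gamma_1)$ and $itord(\gamma,\gamma_2)$; it says nothing about how far the level set $L_q$ extends around $\gamma$. The existence of order-$q$ arcs at inner tangency $\beta$ (resp.\ $\alpha$) on the required sides of $\gamma$ is essentially equivalent to the inequality $\mu_T(\gamma)\le\beta$ (resp.\ $\mu_T(\gamma)\le\alpha$) that you are trying to prove: a priori $L_q$ could be a thin zone hugging $\gamma$, and ruling this out is precisely where the affineness of $\mu_{T,f}$ and the interval structure of $Q_f(T)$ must do real work. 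Your sketch invokes these hypotheses only as an incantation (``use the affine form together with the extremal position of $\tilde\gamma$ \dots to conclude''), with no inequality actually derived from them; moreover the ``outermost arcs'' $\eta_\pm^q$ of $L_q$ on which that final identification rests need not exist, since the order of a zone is an infimum and $L_q$ may be an open zone.

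Two further points. First, the claim $\mu_T\not\equiv\mathrm{const}$ does not ``follow immediately'': if $\mu_T$ were constant, then by (2) the constant would be $\beta$ and the supporting arc would also have width $\beta$; nothing you have established contradicts this, because the strict inequality $\mu_T(\tilde\gamma)>\beta$ is itself part of what must be proved, not a consequence of the definition of supporting arc. Second, a smaller slip in (1): when $q=\beta$ it is false that \emph{every} $\beta$-subtriangle through $\gamma$ has constant order (take $f(x,y)=y$ on the standard $T_1$ and $\gamma=\{y=x\}$; the subtriangle $T(\{y=0\},\gamma)$ contains arcs of all orders); one must use subtriangles of exponent $\beta'>\beta$, on which constancy does follow from the Lipschitz estimate, and then pass to the infimum. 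Note also that a value $q<\beta$ would force, by the same Lipschitz estimate, all arcs of $T$ to have order $q$ and hence $Q$ to be a point, so under the hypothesis only $q\ge\beta$ occurs. The genuinely correct pieces of your outline are the lower bound $\mu_T(\gamma)\ge\beta$ and the observation that a constant-order subtriangle through $\gamma$ cannot contain $\tilde\gamma$ (by single-valuedness of the width on an elementary triangle); but the heart of (2) and (3)---that the order-$q$ zone actually reaches out to inner tangency $\beta$, respectively $itord(\tilde\gamma,\gamma)$, from $\gamma$---is assumed rather than proved.
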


\begin{definition}\label{pizza-decomp}\emph{(See \cite{birbrair2014lipschitz}.)}
		\emph{Let $f$ be a non-negative Lipschitz function defined on an oriented  $\beta$-H\"older triangle $T$.
A \emph{pizza decomposition} of $T$ (or just a \emph{pizza} on $T$) associated with $f$ is a decomposition $\{T_i\}_{i=1}^p$ of $T$ into
$\beta_i$-H\"older triangles $T_i=T(\lambda_{i-1},\lambda_i)$ ordered according to the orientation of $T$, such that\newline
(1) $\lambda_0$ and $\lambda_p$ are the boundary arcs of $T$,\newline
(2) $T_i\cap T_{i+1}=\lambda_i$ for $1\le i< p$,\newline
(3) $T_i\cap T_j=\{0\}$ when $|i-j|>1$,\newline
(4) each H\"older triangle $T_i$ is a pizza slice associated with $f$.\newline
We write $q_i=ord_{\lambda_i}f$, $Q_i=Q_f (T_i)$, $\mu_i(q)=\mu_{T_i,f}(q)$.
If $Q_i$ is not a point, then $\tilde\gamma_i$ denotes the supporting arc of $T_i$ with respect to $f$.}
\end{definition}

\begin{definition}\label{pizza-minimal}	\emph{A pizza decomposition $\{T_i\}$ of $T$ associated with $f$ is \emph{minimal} if $T_{i-1}\cup T_i$ is not a pizza slice associated with $f$ for any $i>1$.}
\end{definition}

\begin{definition}\label{pizza-equiv}		
\emph{For two non-negative Lipschitz functions $f$ on $T$ and $g$ on $T'$, a pizza decomposition $\{T_i=T(\lambda_{i-1},\lambda_i)\}$ of $T$ associated with $f$
is \emph{equivalent} to a pizza decomposition $\{T'_i=T(\lambda'_{i-1},\lambda'_i)\}$ of $T'$ associated with $g$
if there is an orientation preserving inner bi-Lipschitz homeomorphism $h:T\to T'$ such that $h(\lambda_i)=\lambda'_i$, $ord_{\lambda_i}f=ord_{\lambda'_i}g$, $Q_f(T_i)=Q_g (T'_i)$ and $\mu_{T_i,f}\equiv\mu_{T'_i,g}$, for all $i$, and moreover, $h(\tilde\gamma_i)=\tilde\gamma'_i$ if $Q_f(T_i)=Q_g (T'_i)$ is not a point, where $\tilde\gamma_i$ and $\tilde\gamma'_i$ are the supporting arcs for $T_i$ and $T'_i$ with respect to $f$ and $g$.}
\end{definition}

\begin{definition}\label{def:contactequiv}
\emph{ Let $T$ and $T'$ be two $\beta$-H\"older triangles.
Two Lipschitz function germs $f: (T,0) \longrightarrow (\R,0)$ and  $g: (T',0) \longrightarrow (\R,0)$ are
\emph{Lipschitz contact equivalent} if there exist two germs of inner
bi-Lipschitz homeomorphisms $h:(T,0) \longrightarrow (T',0)$ and  $H: (T\times\R,0) \longrightarrow (T'\times\R,0)$ such that
$H(T\times \{0\}) = T'\times \{0\}$ and the following
diagram is commutative:}

\begin{equation}\label{diagram:contactequiv}
\begin{array}{lllll}
(T,0) & \stackrel{(id,\, f)}{\longrightarrow}
 & (T \times \R,0) & \stackrel{\pi}{\longrightarrow} & (T,0) \\
\,\,\,h \, \downarrow &  & \;\;\;\;\; H \, \downarrow  &  & \,\,\, h \, \downarrow \\
(T',0)& \stackrel{(id, \,g)}{\longrightarrow} & (T' \times\R,0) & \stackrel{\pi'}{\longrightarrow}& (T',0) \\
\end{array}
\end{equation}
\emph{Here $\pi:T \times \R\to T$ and $\pi': T'\times \R\to T'$ are natural projections.}
\end{definition}

The main result of \cite{birbrair2014lipschitz}, reformulated for non-negative Lipschitz functions defined on H\"older triangles, is the following theorem.

\begin{theorem}\label{pizza-theorem} Let $T$ and $T'$ be oriented H\"older triangles. Non-negative Lipschitz functions $f:T\to \R$ and $g:T'\to \R$ are Lipschitz contact equivalent if and only if a minimal pizza decomposition of $T$ associated with $f$ and a minimal pizza decomposition of $T'$ associated with $g$ are equivalent.
In particular, any two minimal pizza decompositions associated with the same function $f:T\to \R$ are equivalent.
\end{theorem}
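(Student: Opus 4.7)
The plan is to reduce the statement to the main result of \cite{birbrair2014lipschitz} and verify that the reformulation for non-negative Lipschitz functions on H\"older triangles goes through without essential change. The argument splits into the two implications, plus the uniqueness-up-to-equivalence corollary.

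For the ($\Leftarrow$) direction, I would construct the contact equivalence slice by slice. Given equivalent minimal pizza decompositions $\{T_i\}$ and $\{T'_i\}$ with matching data $(Q_i, \mu_i, q_i, \tilde\gamma_i)$, I would first realize the inner bi-Lipschitz homeomorphism $h:T\to T'$ from Definition \ref{pizza-equiv}, sending $\lambda_i\mapsto\lambda'_i$ and, on slices with non-trivial $Q_i$, $\tilde\gamma_i\mapsto\tilde\gamma'_i$. On each slice, the data $Q_f(T_i)=Q_g(T'_i)$ together with the affine width function and the supporting arc determine $f\circ h^{-1}$ and $g$ up to a Lipschitz contact change of coordinates on $T'_i\times\R$: both functions can be put in a normal form depending only on this data, by fiberwise rescaling of the $\R$-coordinate of the form $(x,t)\mapsto (x,\, t\cdot g(x)/(f\circ h^{-1})(x))$ away from common zero arcs, glued with the identity near them, exactly as in \cite{birbrair2014lipschitz}. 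Because corresponding boundary arcs carry matching orders $q_i=ord_{\lambda_i}f=ord_{\lambda'_i}g$, these local equivalences glue along $\lambda'_i\times\R$ to a single ambient bi-Lipschitz $H$ that makes diagram (\ref{diagram:contactequiv}) commute.

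For the ($\Rightarrow$) direction, suppose $(h,H)$ realizes a Lipschitz contact equivalence. The key observation is that such a pair preserves, for every arc $\gamma\subset T$, the order $ord_\gamma f$ (one compares $f$ along $\gamma$ with $g$ along $h(\gamma)$ using that $H$ is bi-Lipschitz and fixes the zero section) and the width $\mu_T(\gamma,f)$, since width is defined in terms of exponents of H\"older subtriangles on which $Q_f$ is a point, and this condition is transported by $h$. Consequently $Q_f$, the width function $\mu_{T,f}$, and the supporting arcs of each pizza slice are invariants of contact equivalence. Thus, given a minimal pizza decomposition $\{T_i\}$ of $T$ associated with $f$, the collection $\{h(T_i)\}$ is a pizza decomposition of $T'$ associated with $g$ whose combinatorial invariants agree with those of $\{T_i\}$; minimality transfers, and the resulting matching is precisely the equivalence of Definition \ref{pizza-equiv}. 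The ``in particular'' statement then follows by taking $T=T'$, $f=g$ and $(h,H)=(\mathrm{id},\mathrm{id})$: any two minimal pizza decompositions associated with $f$ are equivalent.

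The main obstacle is the ($\Leftarrow$) direction, specifically the passage from local slice-by-slice normal forms to one global ambient bi-Lipschitz $H$ on $T\times\R$. Two technical points have to be checked: first, that the fiberwise rescaling is genuinely bi-Lipschitz on each $T_i\times\R$, which requires bounding the ratio $g/(f\circ h^{-1})$ using the affine form of $\mu_i(q)=a_iq+b_i$ and the estimates of Proposition \ref{prop:width function properties}; second, that the gluing along $\lambda'_i\times\R$ is consistent and preserves the Lipschitz constants, which is where the matching of supporting arcs and of the orders $q_i$ is essential. Once these two points are in place, the remainder is bookkeeping essentially carried out in \cite{birbrair2014lipschitz}.
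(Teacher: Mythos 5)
Note first that the paper does not prove Theorem~\ref{pizza-theorem}; it is stated explicitly as a reformulation of the main result of \cite{birbrair2014lipschitz}, and no proof is given in the present paper. So there is no ``paper's own proof'' to compare against, and the question is whether your reconstruction of the argument is sound. Your treatment of the $(\Rightarrow)$ direction and the ``in particular'' corollary is correct: since $H$ is bi-Lipschitz and maps $T\times\{0\}$ onto $T'\times\{0\}$, and the distance from $(x,t)$ to $T\times\{0\}$ in $T\times\R$ is exactly $|t|$, commutativity of diagram~(\ref{diagram:contactequiv}) forces $|g(h(x))|\asymp|f(x)|$, hence $ord_{h(\gamma)}g=ord_\gamma f$ for every arc $\gamma$; from there $Q_f$, the width function and the pizza combinatorics are preserved as you say.

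The $(\Leftarrow)$ direction has a genuine gap at the fiberwise rescaling step, and it is the gap, not the bookkeeping, that carries the analytic content of the theorem. Even if $h$ is chosen so that $ord_{h(\gamma)}g=ord_\gamma f$ for \emph{every} arc (which Definition~\ref{pizza-equiv} only imposes on the $\lambda_i$, though one can derive it on the remaining arcs from the width-function data on non-degenerate slices), the map $(x,t)\mapsto\bigl(h(x),\,t\cdot g(h(x))/f(x)\bigr)$ need not be bi-Lipschitz on $T\times\R$. The ratio $R(x)=g(h(x))/f(x)$ can be pinched between positive constants and still have unbounded gradient; the derivative of the second component in the $x$-direction is $t\cdot\nabla R(x)$, and since $t$ is an independent coordinate in $T\times\R$ ranging over all small values, not merely values of order $f(x)$, this term is unbounded near the origin. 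Concretely, on the standard $T_\beta$ take $f(x,y)=y$, $g(x,y)=y+y^2/x^\beta$ and $h=\mathrm{id}$: then $R=1+y/x^\beta\in[1,2]$, yet $\partial R/\partial y=x^{-\beta}$, so $t\cdot\partial R/\partial y$ blows up along $t=x^{\beta/2}$, $y\to 0$. These two functions are contact equivalent with identical minimal pizzas, so the theorem's conclusion holds, but the naive rescaling does not produce the required $H$; the construction in \cite{birbrair2014lipschitz} builds $H$ by a more careful interpolation between the zero section and the graph (essentially normalizing each slice to a standard form rather than pushing $f$ directly onto $g$), and that interpolation is exactly what your sketch defers to the cited reference.
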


\begin{definition}\label{zone}\normalfont (See \cite[Definition 2.34]{GS}.) Let $X$ be a surface germ.
A non-empty set of arcs $Z\subset V(X)$ is called a \emph{zone} if, for any two arcs $\gamma_1\ne\gamma_2$ in $Z$, there exists a
non-singular H\"older triangle $T=T(\gamma_1,\gamma_2)$ such that $V(T)\subset Z$.
A \emph{singular zone} is a zone $Z=\{\gamma\}$ consisting of a single arc $\gamma$.
A zone $Z$ is \emph{normally embedded} if, for any two arcs $\gamma_1\ne\gamma_2$ in $Z$,
there exists a normally embedded H\"older triangle $T=T(\gamma_1,\gamma_2)$ such that $V(T)\subset Z$.
\end{definition}

\begin{definition}\label{order}
\emph{(See \cite[Definition 2.37]{GS}.)
The \emph{order} of a zone $Z$ is defined as $\mu(Z)=\inf_{\gamma,\gamma'\in Z} tord(\gamma,\gamma')$.
If $Z$ is a singular zone then $\mu(Z)=\infty$. If $\mu(Z)=\beta$ then $Z$ is called a $\beta$-zone.}
\end{definition}

\begin{definition}\label{closed}
\emph{(See \cite[Definition 2.40]{GS}.)
A $\beta$-zone $Z$ is \emph{closed} if there are two arcs $\gamma$ and $\gamma'$ in $Z$ such that $tord(\gamma,\gamma')=\beta$.
Otherwise, $Z$ is an \emph{open} zone. By definition, any singular zone is closed.}
\end{definition}

\begin{definition}\label{perfect}
\emph{A zone $Z\subset V(X)$ is \emph{perfect} if, for any two arcs $\gamma$ and $\gamma'$ in $Z$, there exists a H\"older triangle $T\subset X$ such that
$V(T)\subset Z$ and both $\gamma$ and $\gamma'$ are generic arcs of $T$. By definition, any singular zone is perfect.}
\end{definition}

\begin{definition}\label{q-zone}\normalfont
Let $f:T\to\R$ be a Lipschitz function defined on a non-singular H\"older triangle $T$.
A zone $Z\subset V(T)$ is a $q$-\emph{order zone} for $f$ if $ord_\gamma f=q$ for any arc $\gamma\in Z$.
A $q$-order zone for $f$ is \emph{maximal} if it is not a proper subset of any other $q$-order zone for $f$.
The \emph{width zone} $W_T(\gamma,f)$ of an arc $\gamma\subset T$ with respect to $f$ is the maximal $q$-order zone
for $f$ containing $\gamma$, where $q=ord_\gamma f$.
The order of $W_T(\gamma,f)$ is $\mu_T(\gamma,f)$.
The \emph{depth zone} $D_T(\gamma,f)$ of an arc $\gamma\subset T$ with respect to $f$ is
the union of zones $G(T')$ for all triangles $T'\subset T$ such that $\gamma\in G(T')$ and $Q_f(T')$ is a point.
By definition, $D_T(\gamma,f)=\{\gamma\}$ when there are no such triangles $T'$.
The order of $D_T(\gamma,f)$ is $\nu_T(\gamma,f)$.
\end{definition}

\begin{lemma}\label{width zone is closed}
Let $f:T\to\R$ be a Lipschitz function defined on a non-singular H\"older triangle $T$.
For any arc $\gamma\subset T$, the width zone $W_T(\gamma,f)$ is closed.
\end{lemma}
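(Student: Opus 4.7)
The plan is to exhibit two arcs $\alpha_1,\alpha_2\in W_T(\gamma,f)$ whose outer tangency order equals $\beta:=\mu_T(\gamma,f)$; this proves $W_T(\gamma,f)$ is closed by Definition \ref{closed}. If $W_T(\gamma,f)=\{\gamma\}$ is a singular zone the claim is immediate from Definition \ref{closed}, so I assume $W_T(\gamma,f)$ contains at least two arcs and $\beta\in\F$ is finite. Throughout, let $q=ord_\gamma f$.

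The crucial step is to show that the infimum in Definition \ref{def:width function} is attained: I would produce a non-singular H\"older triangle $T^*\subset T$ with $\gamma\in T^*$, $Q_f(T^*)=\{q\}$, and $\mu(T^*)=\beta$. For this I rely on a minimal pizza decomposition $\{T_i\}$ of $T$ associated with $f$ (Definition \ref{pizza-decomp}): each $T_i$ is elementary, so by Definition \ref{def: Elementary Holder triangle} together with Proposition \ref{prop:width function properties}, the set of arcs in $T_i$ of $f$-order $q$ is a H\"older sub-triangle of exponent $\mu_{T_i,f}(q)$. Assembling these sub-triangles across adjacent slices that share an arc of order $q$, and using finiteness of $\mu_{T,f}(q)$ cited in Definition \ref{def:width function} to guarantee that only finitely many pieces need be glued, yields a single H\"older triangle $T^*$ of exponent exactly $\beta$ that contains $\gamma$ and has constant $f$-order $q$.

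With $T^*$ in hand, both of its boundary arcs $\alpha_1,\alpha_2$ have $f$-order $q$, so $V(T^*)$ is a $q$-order zone for $f$ containing $\gamma$; by maximality of $W_T(\gamma,f)$ we conclude $\alpha_1,\alpha_2\in W_T(\gamma,f)$. Since $T^*$ is a $\beta$-H\"older triangle, $itord(\alpha_1,\alpha_2)=\beta$ and hence $tord(\alpha_1,\alpha_2)\geq\beta$. The main obstacle I anticipate is upgrading this to the equality $tord(\alpha_1,\alpha_2)=\beta$ rather than a strict inequality: for that I would use non-singularity of $T$ to refine $T^*$ inside $T$ to a normally embedded $\beta$-H\"older sub-triangle that still contains $\gamma$ and has constant $f$-order $q$; on such a sub-triangle inner and outer metrics are equivalent, so its boundary arcs lie in $W_T(\gamma,f)$ and realize outer tangency order exactly $\beta$, finishing the proof.
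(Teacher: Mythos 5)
Your proposal takes a genuinely different route from the paper. The paper's proof is a short definability argument: for $f|_\gamma\equiv 0$ it uses the maximal H\"older triangle containing $\gamma$ in the zero set $\{f=0\}$; for $ord_\gamma f=q<\infty$ it considers the definable one-parameter family of maximal H\"older triangles $\tilde T_c$ containing $\gamma$ inside the sublevel sets $T_c=\{|f(x)|\le c\,t^q\}$, and observes that by o-minimality the exponent of $\tilde T_c$ must stabilize at $\mu_T(\gamma,f)$ for all large $c$, which gives attainability of the infimum at once. You instead propose to build a sub-triangle $T^*$ by gluing pieces across slices of a minimal pizza decomposition. That strategy is plausible in spirit but, as written, has real gaps.

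First, the assertion that ``the set of arcs in $T_i$ of $f$-order $q$ is a H\"older sub-triangle of exponent $\mu_{T_i,f}(q)$'' is not correct: that set is only a zone (typically open, in the sense that it is not $V(T')$ for a single sub-triangle $T'$). What you actually need is that it \emph{contains} a sub-triangle through $\gamma$ whose exponent is exactly $\mu_{T_i,f}(q)$ --- and producing such a triangle is precisely the attainability of the infimum you set out to prove, so invoking Definition \ref{def: Elementary Holder triangle} and Proposition \ref{prop:width function properties} without further argument is close to circular. Proposition \ref{prop:width function properties}(3) does identify the width with $itord(\tilde\gamma,\gamma)$ to a fixed supporting arc, which is the right lever, but you need to say explicitly how to go from that $itord$ value to a concrete sub-triangle of that exponent, and then how to glue correctly across the (at most two) adjacent slices when $\gamma$ lies on a slice boundary, taking the side where the exponent is smaller.

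Second, the final refinement step does not follow from non-singularity of $T$. Non-singularity only guarantees that each interior arc has \emph{some} normally embedded H\"older triangle neighborhood in $T$; its exponent may be much larger than $\beta$, and there is no reason a $\beta$-exponent sub-triangle of $T^*$ can be chosen normally embedded. So the passage from $itord(\alpha_1,\alpha_2)=\beta$ to $tord(\alpha_1,\alpha_2)=\beta$ is not justified as stated. This is exactly the difficulty the paper's sublevel-set construction avoids, since the $\tilde T_c$ are built directly from the outer inequality $|f(x)|\le c\,t^q$ and the stabilization argument produces the closed zone without needing a normally embedded refinement.

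Finally, note that $ord_\gamma f=\infty$ with $\mu_T(\gamma,f)<\infty$ is a genuine possibility (an arc lying in the interior of the zero set of $f$). Your dichotomy ``singular or finite $\beta$'' covers it formally, but the pizza machinery should then be invoked with $q=\infty\in Q_f$; the paper's separate treatment of $f|_\gamma\equiv 0$ sidesteps this.
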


\begin{proof}
If $f|_\gamma\equiv 0$, then either $\gamma$ is an isolated arc in the closed subset $T_0=\{f(x)=0\}$ of $T$
 and a singular zone $W_T(\gamma,f)=\{\gamma\}$ is closed by definition,
or there is a maximal H\"older triangle $\tilde T_0\subset T_0$ containing $\gamma$.
Then $\mu=\mu_T(\gamma,f)$ is the exponent of $\tilde T_0$, and $W_T(\gamma,f)=V(\tilde T_0)$ is a closed $\mu$-zone.
Otherwise, let $f(\gamma(t))=c_0 t^q+o(t^q)$ where $c_0\ne 0$, and let $\tilde T_c$ be the maximal H\"older triangle
containing $\gamma$ in the subset $T_c=\{|f(x)|\le c t^q\}$ of $T$, where $c\ge|c_0|$.
Then the family $\{\tilde T_c\}$ is definable, H\"older triangles $\tilde T_c$ have the same exponent $\mu=\mu_T(\gamma,f)$
for large enough $c$, and $W_T(\gamma,f)=\bigcup_{c\ge|c_0|}V(\tilde T_c)$.
Thus $W_T(\gamma,f)$ is a closed $\mu$-zone.
\end{proof}

\begin{definition}\label{def:zone-slice}
\emph{Let $T$ be a non-singular H\"older triangle and $f$ a Lipschitz function defined on $T$. If $Z\subset V(T)$ is a zone, we define $Q_f(Z)$
as the set of all exponents $ord_\gamma f$ for $\gamma\in Z$. The zone $Z$ is \emph{elementary} with respect to $f$ if the set of arcs $\gamma\in Z$
such that $ord_\gamma f=q$ is a zone for each $q\in Q_f(Z)$.}

\emph{For $\gamma\in Z$ and $q=ord_\gamma f$, the \emph{width} $\mu_Z(\gamma,f)$ of $\gamma$ with respect to $f$ is the infimum of exponents of H\"older triangles $T'$ containing $\gamma$ such that $V(T')\subset Z$ and $Q_f(T')$ is a point.
The \emph{width zone} $W_Z(\gamma,f)$ of $\gamma$ with respect to $f$ is the maximal subzone of $Z$ containing $\gamma$ such that $q=ord_\lambda f$ for all arcs $\lambda\subset W_Z(\gamma,f)$. The order of $W_Z(\gamma,f)$ is $\mu_Z(\gamma,f)$.
For $q\in Q_f(Z)$ let $\mu_{Z,f}(q)$ be the set of exponents $\mu_Z(\gamma,f)$, where $\gamma\in Z$ is any arc such that $ord_{\gamma}f=q$.
It follows from \cite{birbrair2014lipschitz} that, for each $q\in Q_f(Z)$, the set $\mu_{Z,f}(q)$ is finite.
This defines a multivalued \emph{width function} $\mu_{Z,f}: Q_f(Z)\to \F\cup \{\infty\}$.
If $Z$ is an elementary zone with respect to $f$ then the function $\mu_{Z,f}$ is single valued.}

\emph{We say that $Z$ is a \emph{pizza slice zone} associated with
$f$ if it is elementary with respect to $f$, $Q_f(Z)$ is a closed interval in $\F\cup\{\infty\}$ and, unless $Q_f(Z)$ is a point, $\mu_{Z,f}(q)=aq+b$ is an affine function on $Q_f(Z)$.
If $Z$ is a pizza slice zone such that $Q_f(Z)$ is not a point, then the \emph{supporting subzone}
$\tilde Z$ of $Z$ with respect to $f$ is the set of arcs $\lambda\in Z$ such that $\mu_Z(\lambda,f)=\max_{q\in Q_f(Z)}\mu_{Z,f}(q)$.}
\end{definition}

\begin{lemma}\label{depth}
Let $f$ be a Lipschitz function defined on a non-singular H\"older triangle $T$.
Let $\gamma$ be an interior arc of $T$, so that $T=T'\cup T''$ and $T'\cap T''=\{\gamma\}$.
Then either $\mu_{T'}(\gamma,f)=\mu_{T''}(\gamma,f)$ and $\nu_T(\gamma,f)=\mu_T(\gamma,f)$, or $\nu_T(\gamma,f)=\max(\mu_{T'}(\gamma,f),\mu_{T''}(\gamma,f))>\mu_T(\gamma,f)$.
In both cases, $D_T(\gamma,f)$ is a closed perfect zone.
\end{lemma}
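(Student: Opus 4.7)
The plan is to first compute $\mu_T(\gamma,f)$ and $\nu_T(\gamma,f)$ explicitly in terms of $\mu':=\mu_{T'}(\gamma,f)$ and $\mu'':=\mu_{T''}(\gamma,f)$, obtaining the dichotomy as a direct consequence, and then to construct a witness H\"older triangle to establish that $D_T(\gamma,f)$ is a closed perfect zone.

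First, any H\"older triangle $T_1\subseteq T$ containing $\gamma$ with $Q_f(T_1)$ a single point $\{q\}$ (where $q=ord_\gamma f$) decomposes, when $\gamma$ is interior to $T_1$, as $T_1=T_1'\cup T_1''$ with $T_1'\subseteq T'$ and $T_1''\subseteq T''$; since the exponent of a H\"older triangle glued from two pieces along an arc equals the minimum of the exponents of the pieces, $\mu(T_1)=\min(\mu(T_1'),\mu(T_1''))\ge\min(\mu',\mu'')$, and equality is realized by one-sided triangles $T_1\subseteq T'$ or $T_1\subseteq T''$, giving $\mu_T(\gamma,f)=\min(\mu',\mu'')$. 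Genericity of $\gamma$ in $T_1$ additionally forces $itord(\gamma,\delta_1)=itord(\gamma,\delta_2)$, so both halves of $T_1$ have the same exponent and $\mu(T_1)\ge\max(\mu',\mu'')$, whence $\nu_T(\gamma,f)\ge\max(\mu',\mu'')$.

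To realize $\nu_T=\max(\mu',\mu'')=:\beta$, I invoke that $W_{T'}(\gamma,f)$ is a closed $\mu'$-zone containing $\gamma$ (Lemma \ref{width zone is closed}): the tangency orders from $\gamma$ to arcs of $W_{T'}$ attain every value in $[\mu',\infty]$, so there is $\delta_1\in W_{T'}$ with $tord(\delta_1,\gamma)=\beta$; similarly pick $\delta_2\in W_{T''}$ with $tord(\delta_2,\gamma)=\beta$. The triangle $T^*=T(\delta_1,\delta_2)$ is a $\beta$-H\"older triangle in which $\gamma$ is generic, and since each subtriangle $T(\delta_i,\gamma)$ lies in the corresponding width zone by the zone property, $Q_f(T^*)=\{q\}$. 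This establishes $\nu_T=\max(\mu',\mu'')$, and the dichotomy follows at once: $\mu'=\mu''$ gives $\nu_T=\mu_T$, whereas $\mu'\neq\mu''$ gives $\nu_T>\mu_T$.

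Since $T^*$ is a valid witness in the definition of $D_T$, we have $G(T^*)\subseteq D_T$, and $G(T^*)$ contains pairs of arcs at tangency order exactly $\beta=\nu_T$ (any two generic arcs of $T^*$ with distinct leading behavior), so $D_T$ is closed of order $\nu_T$. For perfectness, given $\eta_1,\eta_2\in D_T$, each is in some $G(T_1^{(i)})$ for a valid triangle $T_1^{(i)}=T(\delta_1^{(i)},\delta_2^{(i)})$; I construct an enlarged balanced triangle $T^{**}=T(\tilde\delta_1,\tilde\delta_2)$ by choosing $\tilde\delta_i$ to be the outermost of the $\delta_i^{(j)}$'s within $W_{T'}$ or $W_{T''}$, so that $T_1^{(j)}\subseteq T^{**}$ for $j=1,2$. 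Non-Archimedean tangency then gives $\eta_1,\eta_2\in G(T^{**})$, and for each $\zeta\in V(T^{**})$ a slight further outward enlargement $T^{***}\supseteq T^{**}$ within the width zones places $\zeta$ as an interior generic arc of $T^{***}$, witnessing $\zeta\in D_T$. The main obstacle I expect is precisely this last verification $V(T^{**})\subseteq D_T$: one must confirm that $\tilde\delta_1,\tilde\delta_2$ can be chosen strictly inside the extremal positions of $W_{T'},W_{T''}$ so that outward expansion remains possible, which requires carefully tracking the non-Archimedean structure of the width zones and handling the case in which an extremal boundary of $W_{T'}$ or $W_{T''}$ coincides with the prescribed boundary.
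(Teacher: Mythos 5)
Your argument follows the paper's proof: you compute $\mu_T(\gamma,f)=\min(\mu',\mu'')$ and realize $\nu_T(\gamma,f)=\max(\mu',\mu'')$ by using the closedness of $W_{T'}(\gamma,f)$ and $W_{T''}(\gamma,f)$ (Lemma \ref{width zone is closed}) to build a witness $\max(\mu',\mu'')$-H\"older triangle with $\gamma$ generic and $Q_f$ a point, then derive closedness and perfectness of $D_T(\gamma,f)$ from unions of such witness triangles --- exactly the paper's route, with your uniform formula $\nu_T=\max(\mu',\mu'')$ subsuming the paper's two cases. The step you flag as the main obstacle (verifying that the arcs of the enlarged triangle lie in $D_T$) is left implicit in the paper as well and is not a genuine gap: for any witness triangle $\hat T$ and any $a_1,a_2\in G(\hat T)$, every arc of $T(a_1,a_2)$ is again generic in $\hat T$, and $G(\hat T)$ has no extremal arc, so one can always pick $a_1,a_2\in G(\hat T)$ making $\eta_1,\eta_2$ generic in $T(a_1,a_2)$ with $V(T(a_1,a_2))\subset G(\hat T)\subset D_T$.
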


\begin{proof} Let $\mu=\mu_T(\gamma,f)$, $\mu'=\mu_{T'}(\gamma,f)$ and $\mu''=\mu_{T''}(\gamma,f)$.
By definition of the width, $\mu=\min(\mu',\mu'')$. By definition of the depth, $\nu_T(\gamma,f)\ge\max(\mu',\mu'')$.
According to Lemma \ref{width zone is closed}, the width zones $W_{T'}(\gamma,f)$ and $W_{T''}(\gamma,f)$ are closed zones of orders $\mu'$ and $\mu''$.
If $\mu'=\mu''=\mu$ then there are two arcs $\gamma'\subset W_{T'}(\gamma,f)$ and $\gamma''\subset W_{T''}(\gamma,f)$
such that $tord(\gamma,\gamma')=tord(\gamma,\gamma'')=\mu$ and $ord_{\lambda}f=ord_\gamma f$ for all arcs $\lambda\subset T(\gamma',\gamma'')$.
Then $\gamma$ is a generic arc of a $\mu$-H\"older triangle $T(\gamma',\gamma'')$, thus $\nu_T(\gamma,f)\le\mu$.
Since $\nu_T(\gamma,f)\ge\mu$, we have $\nu_T(\gamma,f)=\mu$ in this case.
Otherwise, if $\mu'>\mu''$ then, according to Lemma \ref{width zone is closed}, there are two arcs $\gamma'\subset W_{T'}(\gamma,f)$ and $\gamma''\subset W_{T''}(\gamma,f)$ such that $tord(\gamma,\gamma')=tord(\gamma,\gamma'')=\mu'$ and $ord_{\lambda}f=ord_\gamma f$ for all arcs $\lambda\subset T(\gamma',\gamma'')$.
Then $\gamma$ is a generic arc of a $\mu'$-H\"older triangle $T(\gamma',\gamma'')$, thus $\nu_T(\gamma,f)\le\mu'$.
Since $\nu_T(\gamma,f)\ge\max(\mu',\mu'')$, we have $\nu_T(\gamma,f)=\max(\mu',\mu'')$ in this case.

To show that $D_T(\gamma,f)$ is a closed perfect zone, note first that its order is $\nu=\nu_T(\gamma,f)$ and, unless $\nu=\infty$
and $D_T(\gamma,f)=\{\gamma\}$ is by definition closed perfect, $\gamma$ is a generic arc
of a $\nu$-H\"older triangle $\tilde T=T(\gamma',\gamma'')\subset T$ such that $tord_\lambda f=tord_\gamma f$ for any arc $\lambda\subset\tilde T$.
Then, since $tord(\gamma,\gamma')=tord(\gamma,\gamma'')=\nu<\infty$, there is a generic arc $\lambda$ of $\tilde T$ such that $tord(\lambda,\gamma)=\nu$,
thus $\bar T=T(\lambda,\gamma)$ is a $\nu$-H\"older triangle and $V(\bar T)\subset D_T(\gamma,f)$. This implies that $D_T(\gamma,f)$
is a closed zone. If $\lambda'$ and $\lambda''$ are any two arcs in $D_T(\gamma,f)$, then there are two H\"older triangles
$T'\subset T$ and $T''\subset T$ containing $\gamma$ such that $\lambda'\in G(T')$ and $\lambda''\in G(T'')$.
Then both $\lambda'$ and $\lambda''$ are generic arcs of $T'\cup T''$, thus $D_T(\gamma,f)$ is a perfect zone.
\end{proof}

\begin{remark}\label{width-depth}\emph{Let $h:T\to T'$ be an inner bi-Lipschitz homeomorphism, and let $f(x)=g(h(x))$ where $g$ is a Lipschitz
function defined on $T'$. Then $\mu_T(\gamma,f)=\mu_{T'}(h(\gamma),g)$, $\nu_T(\gamma,f)=\nu_{T'}(h(\gamma),g)$,
$h(W_T(\gamma,f))=W_{T'}(h(\gamma),g)$ and $h(D_T(\gamma,f))=D_{T'}(h(\gamma),g)$, for any arc $\gamma\in V(T)$.}
\end{remark}

\begin{lemma}\label{perfect-automorphism}
 A zone $Z\subset V(X)$ is perfect if and only if, for any two arcs $\gamma$ and $\gamma'$ in $Z$, there exists a H\"older triangle $T\subset X$ such that $V(T)\subset Z$, and an inner bi-Lipschitz automorphism $h:X\to X$ such that $h(\gamma)=\gamma'$ and $h(x)=x$ for all $x\in X\setminus T$.
\end{lemma}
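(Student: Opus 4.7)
Given distinct arcs $\gamma, \gamma' \in Z$, perfectness produces a H\"older triangle $T \subset X$ with $V(T) \subset Z$ such that both $\gamma$ and $\gamma'$ are generic arcs of $T$. Since $V(T)\subset Z$ and $Z$ is a zone, $T$ is non-singular, so one can fix an inner bi-Lipschitz identification $\Phi: T \to T_\beta$ (with $\beta = \mu(T)$) sending the boundary arcs of $T$ to $\{y=0\}$ and $\{y=x^\beta\}$. Genericity lets us write $\Phi(\gamma) = (x, c_1 x^\beta + o(x^\beta))$ and $\Phi(\gamma') = (x, c_2 x^\beta + o(x^\beta))$ with $c_1, c_2 \in (0,1)$. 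Define $\psi(x,y) = (x, \phi_x(y))$, where $\phi_x$ is the piecewise affine self-homeomorphism of $[0, x^\beta]$ fixing the endpoints and sending $c_1 x^\beta$ to $c_2 x^\beta$. A routine check shows $\psi$ is inner bi-Lipschitz on $T_\beta$, pointwise identity on the two boundary arcs, and carries $\Phi(\gamma)$ to $\Phi(\gamma')$. Pulling back by $\Phi$ yields an inner bi-Lipschitz automorphism $h_0$ of $T$; extending by the identity outside $T$ produces $h: X \to X$.

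\textbf{If direction (automorphism condition $\Rightarrow$ perfect).} Let $\gamma, \gamma' \in Z$ be distinct, and take $T_0$ and $h$ as in the hypothesis. Since $h$ is a homeomorphism that is the identity on $X \setminus T_0$, continuity forces $h(T_0) = T_0$ and fixes the boundary arcs $\gamma_1, \gamma_2$ of $T_0$ pointwise; in particular $\gamma, \gamma' \subset T_0$ (otherwise $h(\gamma)=\gamma$ contradicting $\gamma\ne\gamma'$). The inner bi-Lipschitz property of $h$ yields $\mu_i := itord(\gamma, \gamma_i) = itord(\gamma', \gamma_i)$ for $i=1,2$. Setting $\mu = \max(\mu_1, \mu_2)$, the non-Archimedean property of $itord$ applied to the boundary arc realizing $\mu$ gives $itord(\gamma, \gamma') \ge \mu$. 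Passing to a standard model of $T_0$, the arcs $\gamma, \gamma'$ are represented as graphs $(x, c_i x^\mu + o(x^\mu))$ over the boundary arc attaining $\mu$, and the band $\{0 \le y \le C x^\mu\}$ with $C > \max(c_1, c_2)$ corresponds to a $\mu$-H\"older subtriangle $T \subset T_0$ in which both $\gamma$ and $\gamma'$ are generic. Since $V(T) \subset V(T_0) \subset Z$, this $T$ witnesses perfectness of $Z$.

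\textbf{Main obstacle.} The delicate step is verifying that the extension-by-identity $h$ in the only-if direction is inner bi-Lipschitz on all of $X$. The difficulty is that the seam $T \cap \overline{X \setminus T}$ may strictly exceed $\partial T$, so demanding $h_0$ to equal the identity only on $\partial T$ need not suffice for a continuous, let alone bi-Lipschitz, extension. I expect to resolve this by localizing $\psi$: replacing it by a variant that agrees with the identity outside a small sub-triangle of $T_\beta$ tightly enclosing $\Phi(\gamma)\cup\Phi(\gamma')$, so that $h_0$ is the identity on a neighborhood of the whole seam. The non-singularity of $T$, inherited from $V(T)\subset Z$ and the zone structure, provides the room required to perform this localization while keeping $\psi$ bi-Lipschitz, and one then checks that paths crossing between $T$ and $X\setminus T$ do so through regions where $h$ acts as the identity, giving global bi-Lipschitz control.
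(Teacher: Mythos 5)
Your ``only if'' direction follows the same strategy as the paper's proof (the paper glues inner bi-Lipschitz homeomorphisms $T(\gamma_1,\gamma)\to T(\gamma_1,\gamma')$ and $T(\gamma,\gamma_2)\to T(\gamma',\gamma_2)$ that are the identity on $\gamma_1,\gamma_2$ and agree along $\gamma$, then extends by the identity; you do the same via an explicit fiberwise map in the standard model), but it has two genuine gaps. The first is repairable: the map $\phi_x$ sending $c_1x^\beta$ to $c_2x^\beta$ carries $\Phi(\gamma)=(x,c_1x^\beta+o(x^\beta))$ only to an arc with the same leading coefficient as $\Phi(\gamma')$, not onto $\Phi(\gamma')$ itself, so as written $h(\gamma)\ne\gamma'$; you must interpolate between the actual graphs $y_\gamma(x)$ and $y_{\gamma'}(x)$ (breakpoint $y_\gamma(x)\mapsto y_{\gamma'}(x)$), not between their leading terms. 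The second is the step you yourself call the main obstacle, and it is left unproved; moreover the localization you propose is not the right mechanism, because if the frontier of $X\setminus T$ really met $T$ along an interior arc lying between (or equal to) $\gamma$ and $\gamma'$, no map that is the identity near the seam could move $\gamma$ to $\gamma'$ at all. The missing idea is that this situation is excluded by the hypothesis $V(T)\subset Z$: if $X\setminus T$ accumulated on an interior arc $\lambda_0$ of $T$, then $\lambda_0$ would be Lipschitz singular (Definition \ref{singulararc}) and would separate, inside $X$, arcs of $V(T)\subset Z$ lying on its two sides, so every H\"older triangle joining two such arcs would contain $\lambda_0$ as an interior arc and hence fail to be non-singular, contradicting Definition \ref{zone}. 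Consequently the seam $T\cap\overline{X\setminus T}$ lies in the two boundary arcs of $T$ (plus the origin), and since your automorphism of $T$, like the paper's, is the identity on both boundary arcs, the identity extension is continuous and inner bi-Lipschitz by the usual decomposition of paths at seam crossings. Without this argument (or an equivalent one) the main assertion of the lemma is not established.

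On the converse, which the paper does not write out, your argument also has an unjustified step: continuity forces $h$ to be the identity only on $T_0\cap\overline{X\setminus T_0}$, so the claim that $h$ fixes both boundary arcs $\gamma_1,\gamma_2$ of $T_0$ pointwise fails when $X\setminus T_0$ does not accumulate on one of them (for instance when $X=T_0$, where $h$ may even interchange the boundary arcs). With it fall the equalities $itord(\gamma,\gamma_i)=itord(\gamma',\gamma_i)$ on which your band construction rests. You need either a separate argument (e.g.\ that $h$ permutes the arcs of the topological boundary of $X$ contained in $T_0$ and preserves all inner tangency orders, and that this still forces the $itord$ profiles of $\gamma$ and $\gamma'=h(\gamma)$ relative to $\{\gamma_1,\gamma_2\}$ to agree up to that permutation) or a different construction of the triangle witnessing perfectness; once the profile equality is secured, your band argument, together with $V(T)\subset V(T_0)\subset Z$, does finish the converse.
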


\begin{proof}
Let $Z\subset V(X)$ be a perfect zone and $\gamma,\gamma'$ two arcs in $Z$. Let $T=T(\gamma_1,\gamma_2)$ be a $\beta$-H\"older triangle such that  $V(T)\subset Z$ and both $\gamma$ and $\gamma'$ are generic arcs in $T$. Then $T=T(\gamma_1,\gamma)\cup T(\gamma,\gamma_2)$ and $T=T(\gamma_1,\gamma')\cup T(\gamma',\gamma_2)$ are two decompositions of $T$ into $\beta$-H\"older triangles. Let $h_1: T(\gamma_1,\gamma)\to T(\gamma_1,\gamma')$ and $h_2: T(\gamma,\gamma_2)\to T(\gamma',\gamma_2)$ be inner bi-Lipschitz homeomorphisms, such that $h_1 |_{\gamma_1=Id}$, $h_2 |_{\gamma_2}=Id$ and
$h_1 |_\gamma=h_2 |_\gamma$. Then the mapping $h:T\to T$ such that $h=h_1$ on $T(\gamma_1,\gamma)$ and $h=h_2$ on $T(\gamma,\gamma_2)$
is an inner bi-Lipschitz homeomorphism such that  $h |_{\gamma_1}=Id$, $h |_{\gamma_2}=Id$ and $h(\gamma)=\gamma')$.
Thus $h$ can be extended  by identity outside $T$ to an inner bi-Lipschitz homeomorphism $X\to X$ preserving $Z$.
\end{proof}

\begin{proposition}\label{MP}
Let $f$ be a non-negative Lipschitz function defined on a normally embedded H\"older triangle $T=T(\gamma_1,\gamma_2)$, oriented from $\gamma_1$ to $\gamma_2$.
There exists a unique finite family $\{D_\ell\}_{\ell=0}^p$ of disjoint zones $D_\ell\subset V(T)$, the pizza zones associated with $f$, with the following properties:\newline
\emph{1.} The singular zones $D_0=\{\gamma_1\}$ and $D_p=\{\gamma_2\}$ are the boundary arcs of $T$.\newline
\emph{2.} For any arc $\gamma\in D_\ell$, $D_\ell=D_T(\gamma,f)$ is a closed perfect $\nu_\ell$-zone, where $\nu_\ell=\nu_T(\gamma,f)$.
In particular, $D_\ell$ is a $q_\ell$-order zone for $f$, where $q_\ell=ord_\gamma f$ for $\gamma\in D_\ell$.
Moreover, $D_\ell$ is a maximal $q_\ell$-order zone for $f$ of order $\nu_\ell$: if $Z\subset V(T)$ is a $q_\ell$-order zone for $f$ containing $D_\ell$ and $\lambda\in Z$ is an arc such that $tord(\lambda,D_\ell)\ge\nu_\ell$, then $\lambda\in D_\ell$.\newline
\emph{3.} Any choice of arcs $\lambda_\ell\in D_\ell$ defines a minimal pizza $\{T_\ell=T(\lambda_{\ell-1},\lambda_\ell)\}_{\ell=1}^p$ on $T$ associated with $f$.\newline
\emph{4.} Any minimal pizza on $T$ associated with $f$ can be obtained as a decomposition $\{T_\ell\}$ of $T$ defined by some choice of arcs $\lambda_\ell\in D_\ell$.
\end{proposition}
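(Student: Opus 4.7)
The plan is to build the family $\{D_\ell\}$ from one fixed minimal pizza on $T$, verify the intrinsic characterization stated in property (2) so that the family does not depend on that choice, and then recover every minimal pizza as a selection $\lambda_\ell\in D_\ell$.

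\textbf{Existence and property (1).} By Theorem \ref{pizza-theorem} there is a minimal pizza $\{T_\ell=T(\lambda_{\ell-1},\lambda_\ell)\}_{\ell=1}^{p}$ on $T$ associated with $f$. Set $D_0=\{\gamma_1\}$, $D_p=\{\gamma_2\}$ and, for $1\le\ell\le p-1$, $D_\ell:=D_T(\lambda_\ell,f)$. Lemma \ref{depth} applied to the splitting of $T$ at $\lambda_\ell$ shows that each $D_\ell$ is a closed perfect $\nu_\ell$-zone with $\nu_\ell=\nu_T(\lambda_\ell,f)$, and in particular is a $q_\ell$-order zone for $q_\ell=ord_{\lambda_\ell}f$. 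Disjointness of the $D_\ell$ is automatic when $q_\ell\ne q_m$; when $q_\ell=q_m$ for $\ell<m$, minimality of the pizza forces an intermediate slice $T_k$ with $Q_f(T_k)\ne\{q_\ell\}$, and an arc in $D_\ell\cap D_m$ would produce a $q_\ell$-constant H\"older triangle crossing $T_k$, contradicting Proposition \ref{prop:width function properties}.

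\textbf{Maximality, property (2).} Let $Z\subset V(T)$ be a $q_\ell$-order zone for $f$ containing $D_\ell$ and let $\lambda\in Z$ satisfy $tord(\lambda,D_\ell)\ge\nu_\ell$. By definability of the zone the supremum defining $tord(\lambda,D_\ell)$ is attained, so we fix $\gamma'\in D_\ell$ with $tord(\lambda,\gamma')\ge\nu_\ell$. By definition of $D_\ell$ there is a H\"older triangle $T^\star\subset T$ in which $\lambda_\ell$ and $\gamma'$ are generic arcs and $Q_f(T^\star)=\{q_\ell\}$. Because $Z$ is a zone, there is a non-singular H\"older triangle $S=T(\lambda,\gamma')$ with $V(S)\subset Z$ and exponent at least $tord(\lambda,\gamma')\ge\nu_\ell$, hence $Q_f(S)=\{q_\ell\}$. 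Using Lemma \ref{perfect-automorphism} on the perfect zone $D_\ell$ we deform inside $T^\star$ so that $\gamma'$ becomes an interior generic arc of $T^\star$; gluing $S$ to the half of $T^\star$ lying on the opposite side of $\gamma'$ from $\lambda$ produces a H\"older triangle $T^\#$ with $Q_f(T^\#)=\{q_\ell\}$ in which both $\lambda_\ell$ and $\lambda$ are generic. Hence $\lambda\in D_T(\lambda_\ell,f)=D_\ell$, which proves (2) and, as a byproduct, shows that $D_\ell$ does not depend on the minimal pizza used to define it.

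\textbf{Properties (3) and (4), and uniqueness.} For (3), any choice $\lambda_\ell'\in D_\ell$ ($1\le\ell\le p-1$) yields, by successive applications of Lemma \ref{perfect-automorphism} on the perfect zones $D_\ell$, an inner bi-Lipschitz automorphism of $T$ that sends $\lambda_\ell$ to $\lambda_\ell'$ while being the identity outside disjoint triangles inside each $D_\ell$; transporting the pizza-slice structure of the $T_\ell$ yields a pizza associated with $f$ with slices $T_\ell'=T(\lambda_{\ell-1}',\lambda_\ell')$, which is again minimal by Theorem \ref{pizza-theorem} since the invariants $q_\ell$, $Q_\ell$, $\mu_\ell$ are preserved. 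For (4), given any minimal pizza $\{T_\ell''\}$ with separating arcs $\lambda_\ell''$, Theorem \ref{pizza-theorem} forces $q_\ell''=q_\ell$ and $\nu_T(\lambda_\ell'',f)=\nu_\ell$; applying (2) to the $q_\ell$-order zone generated by $D_\ell$ and $\lambda_\ell''$ then places $\lambda_\ell''$ in $D_\ell$. Uniqueness of the family $\{D_\ell\}$ follows from the intrinsic characterization furnished by (2).

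The principal obstacle is the maximality step (2): one must package $S$ and $T^\star$ into a single H\"older triangle in which $\lambda$ and $\lambda_\ell$ are simultaneously generic and $f$ has constant order $q_\ell$. This is where the perfectness of $D_\ell$ (so that $\gamma'$ can be moved into the interior of $T^\star$) and the non-Archimedean behaviour of tangency order from Definition \ref{tord} (so that the glued triangle still has exponent at least $\nu_\ell$) are both indispensable. Once (2) is in hand, the remaining items are straightforward applications of Lemmas \ref{depth} and \ref{perfect-automorphism} together with Theorem \ref{pizza-theorem}.
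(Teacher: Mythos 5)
Your overall blueprint coincides with the paper's: build the $D_\ell$ from one fixed minimal pizza by setting $D_\ell=D_T(\lambda_\ell,f)$ for the interior zones, invoke Lemma \ref{depth} for the closed-perfect property, Lemma \ref{perfect-automorphism} to move arcs inside the perfect zones, and Theorem \ref{pizza-theorem} to compare minimal pizzas. However, two steps do not hold up as written.

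For the maximality assertion in property (2), you glue $S=T(\lambda,\gamma')$ to the half of $T^\star$ on the far side of $\gamma'$ to get $T^\#$ and then claim that both $\lambda_\ell$ and $\lambda$ are generic arcs of $T^\#$. But $\lambda$ is a boundary arc of $S$ and therefore a boundary arc of $T^\#$, so $\lambda\notin G(T^\#)$. By Definition \ref{q-zone}, membership of $\lambda$ in $D_T(\lambda_\ell,f)$ requires a triangle in which $\lambda$ is \emph{interior} generic together with $\lambda_\ell$; your $T^\#$ does not witness that, and nothing in the argument produces an arc of constant $f$-order past $\lambda$ that would make $\lambda$ interior. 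The parenthetical remark about deforming $\gamma'$ to "become an interior generic arc of $T^\star$" does not help, because $\gamma'$ is generic (hence already interior) by construction, and the deformation does not change which side of $T^\#$ the arc $\lambda$ lies on. The paper sidesteps this by appealing directly to Definition \ref{q-zone} together with the closedness of $D_\ell$, using an arc of $D_\ell$ on the opposite side of $\lambda_\ell$ so that $\lambda_\ell$, not $\lambda$, plays the role of the generic arc whose depth zone is being computed; you would need to carry out a comparable argument.

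For property (4), you replace the paper's argument with "apply (2) to the $q_\ell$-order zone generated by $D_\ell$ and $\lambda_\ell''$." This presupposes that $D_\ell\cup\{\lambda_\ell''\}$ sits in a common $q_\ell$-order zone and that $tord(\lambda_\ell'',D_\ell)\ge\nu_\ell$, neither of which you establish; a priori $\lambda_\ell''$ could be separated from $D_\ell$ by a slice where $f$ has a different order. The paper instead produces, via Theorem \ref{pizza-theorem}, an inner bi-Lipschitz $h:T\to T$ with $ord_{h(\gamma)}f=ord_\gamma f$ and then uses the Lipschitz invariance of the depth zone (Remark \ref{width-depth}) to get $h(D_\ell)=D_\ell$, hence $\lambda_\ell''=h(\lambda_\ell)\in D_\ell$. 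That invariance argument is what your sketch is missing.
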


\begin{proof}
Consider a decomposition $\{T_\ell\}_{\ell=1}^p$, of $T$ into $\beta_\ell$-H\"older triangles $T_\ell=T(\lambda_{\ell-1},\lambda_\ell)$ which is a minimal pizza for $f$.
Let $Q_\ell\subset\F\cup\{\infty\}$ be the set (either a point or a closed interval) of values
$tord_\gamma f$ for $\gamma\subset T_\ell$, and let $\mu_\ell:Q_\ell\to\F\cup\{\infty\}$ be the affine width function for $f$ on $T_\ell$
(a constant if $Q_\ell$ is a point).
We assume that $\lambda_0$ and $\lambda_p$ are the boundary arcs of $T$, and that $T_\ell\cap T_{\ell+1}=\lambda_\ell$ for $1\le \ell<p$.

Since each boundary arc of $T$ is also a boundary arc of a pizza slice for any pizza decomposition of $T$,
we can define singular zones $D_0=\{\lambda_0\}$ and $D_p=\{\lambda_p\}$.

If the germ at zero of the set $S=\{x\in T,\;f(x)=0\}$ is non-empty, it is a union of finitely many germs isolated arcs and germs of maximal in $S$ H\"older triangles $S_j$.
Each isolated arc of $S$, and each boundary arc of one of the triangles $S_j$, must be a boundary arc of a pizza slice for any minimal pizza on $T$ associated with $f$.
In particular, such an arc $\lambda$ must be one of the arcs $\lambda_\ell$, and the singular zone $\{\lambda\}$ must be one of the zones $D_\ell$.

Assume now that $0<\ell<p$ and $q_\ell=ord_{\lambda_\ell} f<\infty$.
Consider the depth zone $D_\ell=D_T(\lambda_\ell,f)$ (see Definition \ref{q-zone}).
Then $D_\ell$ is a closed perfect zone of order $\nu_\ell=\nu_T(\lambda_\ell,f)$,
which is also a $q_\ell$-order zone for $f$.
Moreover, if $\lambda\subset T_\ell$ is an arc such that $tord(\lambda,\lambda_\ell)\ge\nu_\ell$ and $ord_\gamma f=q_\ell$ for any arc $\gamma\subset T(\lambda_\ell,\lambda)$, then $\lambda\in D_\ell$ by Definition \ref{q-zone}.
The same argument works for $\lambda\subset T_{\ell-1}$.
Thus $D_\ell$ is a maximal $q_\ell$-order zone for $f$ of order $\nu_\ell$.

We claim that, if the arc $\lambda_\ell$ is replaced by any other arc $\theta\in D_\ell$ and the H\"older triangles
$T_\ell=T(\lambda_{\ell-1},\lambda_\ell)$ and $T_{\ell+1}=T(\lambda_\ell,\lambda_{\ell+1})$ with the common arc $\lambda_\ell$ are
replaced by the H\"older triangles $T(\lambda_{\ell-1},\theta)$ and $T(\theta,\lambda_{\ell+1})$ with the common arc $\theta$,
the resulting decomposition of $T$ is again a minimal pizza on $T$ associated with $f$.
Indeed, since $D_\ell$ is a perfect zone, and also a $q_\ell$-order zone for $f$,
by Lemma \ref{perfect-automorphism} one can construct  an inner bi-Lipschitz map $\phi:T\to T$, such that
$\phi(\lambda_\ell)=\theta$ and $\phi(\gamma)=\gamma$ for any arc $\gamma\in V(T)\setminus D_\ell$.
In particular, $ord_{\phi(\gamma)} f=ord_\gamma f$ for each arc $\gamma\subset T$, thus
$\phi$ transforms the function $f$ into a $v$-equivalent function.
This implies that $\phi$ preserves all zones $D_\ell$, and that decomposition $\{\phi(T_\ell)\}$ defines a minimal pizza on $T$ associated with $f$.
Replacing all arcs $\lambda_\ell$ with some other arcs $\theta_\ell\in D_\ell$, for $\ell=0,\dots,p$,
we see that any choice of arcs $\lambda_\ell\in D_\ell$ results in a minimal pizza on $T$ associated with $f$.

On the other hand, given a minimal pizza $\{T_\ell=T(\lambda_{\ell-1},\lambda_\ell)\}$ on $T$ associated with $f$,
consider any other minimal pizza $\{T'_\ell=T(\theta_{\ell-1},\theta_\ell)\}$ on $T$ associated with $f$.
By the Lipschitz contact invariance of a minimal pizza (see Theorem \ref{pizza-theorem}) there exists an inner bi-Lipschitz homeomorphism $h:T\to T$
such that $h(\lambda_\ell)=\theta_\ell$ and $h(T_\ell)=T'_\ell$ for all $\ell$, and also such that $ord_{h(\gamma)}f=ord_\gamma f$ for any arc $\gamma\subset T$.
Thus $h$ transforms $f$ into a function of the same contact (see Definition 2.2 from \cite{Bi-Fer-Co-Ru}) .
Since the zones $D_\ell$ are Lipschitz invariant, we have $h(D_\ell)=D_\ell$ for all $\ell$, thus $\theta_\ell\in D_\ell$.
This proves that any minimal pizza can be obtained by some choice of arcs $\lambda_\ell\in D_\ell$.
\end{proof}

\begin{corollary}\label{zone-slice} Let $\{D_\ell\}_{\ell=0}^p$ be the pizza zones of a minimal pizza
 $\{T_\ell=T(\lambda_{\ell-1},\lambda_\ell)\}_{\ell=1}^p$ on $T$ associated with $f$, as in Proposition \ref{MP}.
For each $\ell=1,\ldots,p$, the set $Y_\ell=D_{\ell-1}\cup D_\ell\cup V(T_\ell)$ is a pizza slice zone associated with $f$,
independent of the choice of arcs $\lambda_\ell\in D_\ell$.
Moreover, $Y_\ell$ is a maximal pizza slice zone: if a pizza slice zone $Y\subset V(T)$ associated with $f$ contains $Y_\ell$ then $Y=Y_\ell$.
\end{corollary}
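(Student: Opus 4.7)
The plan is to verify the three claims of the statement---independence of $Y_\ell$ from the choice of representatives, the pizza-slice-zone property, and maximality---in turn.

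For independence, let $\theta_\ell\in D_\ell$ be another choice. Since $D_\ell$ is perfect by Lemma \ref{depth}, Lemma \ref{perfect-automorphism} provides an inner bi-Lipschitz automorphism $h$ of $T$ with $h(\lambda_\ell)=\theta_\ell$ that fixes every arc outside some H\"older triangle $T''\subset V(D_\ell)$. In particular $h$ fixes $\lambda_{\ell-1}$, so $h(T_\ell)=T(\lambda_{\ell-1},\theta_\ell)$, and since $h$ permutes $V(T'')\subset D_\ell$ within $D_\ell$ one obtains $V(T(\lambda_{\ell-1},\theta_\ell))\cup D_\ell = V(T_\ell)\cup D_\ell$; the analogous argument at the other endpoint gives the full independence claim. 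The same construction also shows $Y_\ell$ is a zone: for arcs $\alpha,\beta\in Y_\ell$, the interesting case $\alpha\in D_{\ell-1}$ and $\beta\in D_\ell$ is handled by taking $\lambda_{\ell-1}=\alpha,\ \lambda_\ell=\beta$, making the pizza slice $T_\ell=T(\alpha,\beta)$ a non-singular H\"older triangle with $V(T_\ell)\subset Y_\ell$; the remaining configurations reduce to the zone properties of $D_{\ell-1}$, $D_\ell$, or $V(T_\ell)$, possibly after bridging through an auxiliary arc.

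For the pizza-slice-zone property, note first that $Q_f(Y_\ell)=Q_\ell$, since $D_{\ell-1}$ and $D_\ell$ only contribute the endpoint orders $q_{\ell-1},q_\ell$. For each $q$ strictly between $q_{\ell-1}$ and $q_\ell$ the $q$-order arcs of $Y_\ell$ coincide with those of $V(T_\ell)$ and form a zone of order $\mu_\ell(q)$ by elementarity of $T_\ell$. At the boundary value $q=q_\ell$ the $q_\ell$-order arcs of $Y_\ell$ form $D_\ell\cup W_{T_\ell}(\lambda_\ell,f)$, and Lemma \ref{depth} gives two subcases: either $\mu_\ell(q_\ell)=\nu_\ell$, in which case the maximality assertion of Proposition \ref{MP}(2) forces $W_{T_\ell}(\lambda_\ell,f)\subset D_\ell$ so the union collapses to $D_\ell$; or $\mu_\ell(q_\ell)<\nu_\ell$, in which case for any $\alpha\in D_\ell$ and $\beta\in W_{T_\ell}(\lambda_\ell,f)$ the H\"older triangle $T(\alpha,\beta)$ decomposes through $\lambda_\ell$ into $T(\alpha,\lambda_\ell)\subset D_\ell$ and $T(\lambda_\ell,\beta)\subset W_{T_\ell}(\lambda_\ell,f)$, both carrying order $q_\ell$. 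The non-Archimedean property of $tord$ then yields $\mu_{Y_\ell,f}(q_\ell)=\mu_\ell(q_\ell)$; the same argument at $q_{\ell-1}$ shows $\mu_{Y_\ell,f}\equiv\mu_\ell$ is affine on $Q_\ell$, so $Y_\ell$ is a pizza slice zone.

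For maximality, assume $Y\supsetneq Y_\ell$ is a pizza slice zone containing some $\gamma\in Y\setminus Y_\ell$, and by symmetry assume $\gamma$ lies past $D_\ell$ in the orientation of $T$. Using $\lambda_{\ell-1}\in D_{\ell-1}\subset Y$, the zone property of $Y$ produces a non-singular H\"older triangle $T'=T(\lambda_{\ell-1},\gamma)\supsetneq T_\ell$ with $V(T')\subset Y$; the pizza-slice-zone property of $Y$ forces $T'$ to itself be a pizza slice. Since $T'$ extends $T_\ell$ across $\lambda_\ell$ into $T_{\ell+1}$, the affine width of $T'$ must agree with $\mu_\ell$ on $Q_\ell$ and with part of $\mu_{\ell+1}$ on the remainder; two affine functions agreeing on a nontrivial interval coincide, so $\mu_\ell$ and $\mu_{\ell+1}$ extend each other and $T_\ell\cup T_{\ell+1}$ is itself a pizza slice, contradicting the minimality of the pizza. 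The main technical hurdle is the subcase $\mu_\ell(q_\ell)<\nu_\ell$ of the elementarity step, where one must carefully track the overlap of $D_\ell$ and $W_{T_\ell}(\lambda_\ell,f)$ and invoke the non-Archimedean metric to compute the order of their union correctly.
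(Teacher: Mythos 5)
The paper offers no written proof of Corollary~\ref{zone-slice}; it is stated as an immediate consequence of Proposition~\ref{MP}, so your attempt must stand or fall on its own. Your treatment of the independence of $Y_\ell$ from the choice of $\lambda_\ell\in D_\ell$ (via perfectness of $D_\ell$, Lemma~\ref{depth}, and the automorphism of Lemma~\ref{perfect-automorphism}) is correct and is exactly the mechanism already used inside the proof of Proposition~\ref{MP}. Your verification that $Y_\ell$ is a pizza slice zone is also essentially sound: the computation that the $q_\ell$-level set of $Y_\ell$ is $D_\ell\cup W_{T_\ell}(\lambda_\ell,f)$, a zone whose order equals $\mu_\ell(q_\ell)=\min(\mu_{T_\ell}(\lambda_\ell,f),\nu_\ell)$ by the non-Archimedean property, does give $\mu_{Y_\ell,f}\equiv\mu_\ell$.

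The maximality argument, however, has a genuine gap. From $V(T')\subset Y$ and $Y$ being a pizza slice zone you conclude that $T'=T(\lambda_{\ell-1},\gamma)$ ``is itself a pizza slice'' and that its affine width function ``must agree with $\mu_\ell$ on $Q_\ell$''. Neither claim is justified, and the second can fail. Elementarity of $T'$ does follow from elementarity of $Y$ (within a H\"older triangle the subtriangle $T(\alpha,\alpha')$ is unique, so the $q$-level set of $T'$ is $Z_q\cap V(T')$, a zone), but the width function of $T'$ is only $\ge\mu_{Y,f}$ pointwise, with possible strict inequality at the boundary values $q_{\ell-1}$ and $\operatorname{ord}_\gamma f$ of $Q_f(T')$, because there the level zone of $Y$ may stick out of $T'$ and $Z_q\cap V(T')$ may have strictly higher order than $Z_q$. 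So $\mu_{T',f}$ is affine on the interior of $Q_f(T')$ but may jump up at the endpoints, and $T'$ need not be a pizza slice. Moreover, even at $q_\ell$ (interior to $Q_f(T')$ once $T'$ crosses $\lambda_\ell$), the width $\mu_{T',f}(q_\ell)$ is the order of $W_{T_\ell}(\lambda_\ell,f)\cup V(T(\lambda_\ell,\gamma))$, which equals $\min(\mu_{T_\ell}(\lambda_\ell,f),\,tord(\gamma,\lambda_\ell))$ and so can be strictly smaller than $\mu_\ell(q_\ell)$ when $tord(\gamma,\lambda_\ell)<\mu_{T_\ell}(\lambda_\ell,f)$; your assertion ``agrees with $\mu_\ell$ on $Q_\ell$'' is not a consequence of anything you proved. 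You also silently assume $\operatorname{ord}_\gamma f\ne q_\ell$ (otherwise there is no ``remainder'' of $\mu_{\ell+1}$), so the case $\operatorname{ord}_\gamma f=q_\ell$ is not addressed at all.

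A cleaner route to maximality is to exploit the maximality assertion in Proposition~\ref{MP}(2) directly together with elementarity and affinity of $\mu_{Y,f}$. If $\gamma\in Y\setminus Y_\ell$ lies on the $D_\ell$ side: either $\operatorname{ord}_\gamma f=q_\ell$, in which case (since $\gamma\notin D_\ell$) Proposition~\ref{MP}(2) forces $tord(\gamma,D_\ell)<\nu_\ell$, so the $q_\ell$-level zone of $Y$ has order $<\mu_\ell(q_\ell)$ while $\mu_{Y,f}$ agrees with $\mu_\ell$ on the interior of $Q_\ell$, contradicting affinity (when $Q_\ell$ is not a point) or leading to a separate short argument when $Q_\ell$ is a point; or $\operatorname{ord}_\gamma f\ne q_\ell$, in which case one must first show that elementarity of $Y$ prevents $Y\setminus Y_\ell$ from containing arcs of order lying in the interior of $Q_\ell$ (such a level set would split into two components across $D_\ell$), and then the affine-extension argument you sketched can be made to run. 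As written, the proof does not establish the key premises of its final step.
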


\section{Elementary pairs of normally embedded H\"older triangles}\label{Section:LNE triangles}

Let $T=T(\gamma_1,\gamma_2)$ and $T'=T(\gamma'_1,\gamma'_2)$ be two normally embedded $\beta$-H\"older triangles,
oriented from $\gamma_1$ to $\gamma_2$ and from $\gamma'_1$ to $\gamma'_2$ respectively.

\begin{definition}\label{def:regular_pair}\emph{A pair $(\gamma,\gamma')$ of arcs $\gamma\subset T$ and $\gamma'\subset T'$ is \emph{regular} if}
\begin{equation}\label{regularpair}
tord(\gamma,T')=tord(\gamma,\gamma')=tord(\gamma',T).
\end{equation}
\end{definition}

\begin{proposition}\label{map-graph}
 Let $T=T(\gamma_1,\gamma_2)$ and $T'=T(\gamma'_1,\gamma'_2)$ be two normally embedded $\beta$-H\"older triangles.
 Let $f(x)=dist(x,T')$ be the distance from $x\in T$ to $T'$, and let $g(x')=dist(x',T)$ be the distance from $x'\in T'$ to $T$.
 Let $\Gamma\subset T\times\R$ and $\Gamma'\subset T'\times\R$ be the graphs of the functions $f(x)$ and $g(x')$.
Then the following conditions are equivalent:

1. There is a homeomorphism $H:T\cup T'\to T\cup\Gamma$,
bi-Lipschitz with respect to the outer metric, such that $H(\gamma_1)=\gamma_1$ and $H(\gamma_2)=\gamma_2$.

2. There is a homeomorphism $H':T\cup T'\to T'\cup\Gamma'$,
bi-Lipschitz with respect to the outer metric, such that $H'(\gamma'_1)=\gamma'_1$ and $H'(\gamma'_2)=\gamma'_2$.

3. There exists a bi-Lipschitz homeomorphism $h:T\to T'$ such that
$h(\gamma_1)=\gamma'_1$, $h(\gamma_2)=\gamma'_2$ and $tord(\gamma,h(\gamma))=tord(\gamma,T')$ for any arc $\gamma\subset T$.

4. There exists a bi-Lipschitz homeomorphism $h':T'\to T$ such that
$h'(\gamma'_1)=\gamma_1$, $h'(\gamma'_2)=\gamma_2$ and $tord(\gamma',h'(\gamma'))=tord(\gamma',T)$ for any arc $\gamma'\subset T'$.

5. There exists a bi-Lipschitz homeomorphism $h:T\to T'$ such that
$h(\gamma_1)=\gamma'_1$, $h(\gamma_2)=\gamma'_2$, and the pair of arcs $(\gamma,h(\gamma))$ is regular for any arc $\gamma\subset T$.
\end{proposition}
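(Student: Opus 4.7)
The plan is to establish the cyclic chain $(1)\Rightarrow(3)\Rightarrow(5)\Rightarrow(1)$; the conditions $(2)$ and $(4)$ then follow by symmetry, since interchanging $T$ with $T'$ converts $(1),(3)$ into $(2),(4)$, and $(5)$ is itself symmetric under $h\leftrightarrow h^{-1}$ because regularity is symmetric in its two arcs.

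For $(1)\Rightarrow(3)$, I first reduce to the case $H|_T=\mathrm{Id}$: the restriction $H|_T\colon T\to T$ is an outer bi-Lipschitz self-map fixing $\gamma_1,\gamma_2$, and extends to a bi-Lipschitz self-map $\sigma$ of $T\cup\Gamma$ via $\sigma(y,f(y))=(H|_T(y),f(H|_T(y)))$; replacing $H$ by $\sigma^{-1}\circ H$ achieves $H|_T=\mathrm{Id}$. Define $h\colon T\to T'$ by $h(x)=H^{-1}(x,f(x))$. For any arc $\gamma\subset T$,
\[
|H(\gamma(t))-H(h(\gamma)(t))|=|(\gamma(t),0)-(\gamma(t),f(\gamma(t)))|=f(\gamma(t)),
\]
whose order in $t$ equals $ord_\gamma f=tord(\gamma,T')$, and the bi-Lipschitz property of $H$ transfers this equality to $tord(\gamma,h(\gamma))=tord(\gamma,T')$.

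For $(3)\Rightarrow(5)$, fix $\gamma\subset T$, set $\gamma'=h(\gamma)$ and $\alpha=tord(\gamma,\gamma')=tord(\gamma,T')$; only the reverse of the trivial inequality $tord(\gamma',T)\ge\alpha$ requires proof. Suppose for contradiction there is $\lambda\subset T$, $\lambda\ne\gamma$, with $tord(\gamma',\lambda)>\alpha$. Condition $(3)$ gives $tord(\lambda,h(\lambda))=tord(\lambda,T')\ge tord(\lambda,\gamma')>\alpha$, and the non-Archimedean property of $tord$ (Definition~\ref{tord}) then yields $tord(\gamma',h(\lambda))>\alpha$. Since $h$ is a bi-Lipschitz map between normally embedded triangles, it preserves tangency orders, so $tord(\gamma,\lambda)=tord(\gamma',h(\lambda))>\alpha$. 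A second non-Archimedean step applied to $\gamma,\lambda,h(\lambda)$ gives $tord(\gamma,h(\lambda))>\alpha$. But $tord(\gamma,\gamma')=\alpha<tord(\gamma,h(\lambda))$ then forces $tord(\gamma',h(\lambda))=\alpha$, contradicting the earlier strict inequality.

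For $(5)\Rightarrow(1)$, set $H|_T=\mathrm{Id}$ and $H|_{T'}(x')=(h^{-1}(x'),f(h^{-1}(x')))$. Each restriction is manifestly bi-Lipschitz, so the content is the cross-estimate. For $\gamma\subset T$ and $\gamma'\subset T'$, with $\lambda=h^{-1}(\gamma')\subset T$, one has $|H(\gamma(t))-H(\gamma'(t))|^2=|\gamma(t)-\lambda(t)|^2+f(\lambda(t))^2$ of order $\min(tord(\gamma,\lambda),ord_\lambda f)$, while $|\gamma(t)-\gamma'(t)|=|\gamma(t)-h(\lambda)(t)|$ has order $tord(\gamma,h(\lambda))$. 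The key identity $tord(\gamma,h(\lambda))=\min(tord(\gamma,\lambda),ord_\lambda f)$ follows from a non-Archimedean case analysis using both halves of regularity, $tord(\lambda,h(\lambda))=ord_\lambda f$ and $tord(h(\lambda),T)=ord_\lambda f$. The main obstacle is the equality case $tord(\gamma,\lambda)=ord_\lambda f$, where the lower bound on $tord(\gamma,h(\lambda))$ comes from the first half of regularity via the non-Archimedean inequality, but the matching upper bound $tord(\gamma,h(\lambda))\le tord(h(\lambda),T)=ord_\lambda f$ crucially requires the second half of regularity; this is precisely the point where $(3)$ alone would not be sufficient. In the polynomially bounded o-minimal setting, outer bi-Lipschitz equivalence of definable surface germs is detected by tangency orders on pairs of arcs, so this order-matching yields bi-Lipschitzness of $H$.
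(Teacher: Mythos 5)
Your proof is correct and, at its core, rests on the same two constructions the paper uses: $h(x)=H^{-1}(x,f(x))$ (after normalizing $H|_T=\mathrm{Id}$, which matches the paper's $h(x)=H^{-1}(H(x),f(H(x)))$) and $H'(x')=(h^{-1}(x'),f(h^{-1}(x')))$. The overall equivalence chain and the symmetry reduction for $(2),(4)$ are the same. The genuine addition is your self-contained ultrametric argument for $(3)\Rightarrow(5)$: the paper never proves this implication on its own, instead extracting condition $(5)$ directly from $(1)$ by observing that $tord(h(\gamma),T)=tord((H(\gamma),f\circ H(\gamma)),T)=ord_{H(\gamma)}f$ as well. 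Your chain of non-Archimedean steps (deriving $tord(\gamma',h(\lambda))>\alpha$ from the existence of $\lambda$ with $tord(\gamma',\lambda)>\alpha$, transporting it through $h$ to get $tord(\gamma,\lambda)>\alpha$, and closing the contradiction against $tord(\gamma,\gamma')=\alpha$) is a clean free-standing derivation, and it usefully explains why, in $(5)\Rightarrow(1)$, the second half of regularity (the bound $tord(h(\lambda),T)=ord_\lambda f$) is exactly what handles the equality case $tord(\gamma,\lambda)=ord_\lambda f$ in the cross-estimate. The paper states $(3)\Rightarrow(1)$ directly with no verification, so your observation that the cross-estimate genuinely needs the other half of the regularity equality (supplied by your $(3)\Rightarrow(5)$ step) is a real clarification rather than a detour. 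One caveat: the closing appeal to "outer bi-Lipschitz equivalence of definable surface germs is detected by tangency orders on pairs of arcs" compresses a definable-choice/curve-selection argument into a single sentence; it is at the same level of terseness as the paper's own "$H$ satisfies condition 1," but since your proof otherwise spells out its steps, it would be worth stating this as a lemma or citing the relevant arc criterion.
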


\begin{proof}
If condition 1 is satisfied, we may assume that $H(T)=T$ and $H(T')=\Gamma$.
Since $f$ is a Lipschitz function on $T$ and $H$ is an outer bi-Lipschitz homeomorphism, we have
$tord(\gamma,T')=tord(H(\gamma),\Gamma)=ord_{H(\gamma)} f=tord(H(\gamma),f(H(\gamma))$ for any arc $\gamma\subset T$.
Since $H^{-1}$ is also an outer bi-Lipschitz homeomorphism, the mapping $h:T\to T'$ defined as $h(x)=H^{-1}((H(x),f(H(x)))$ is a bi-Lipschitz homeomorphism satisfying condition 5, which implies conditions 3 and 4.
Conversely, given a homeomorphism $h:T\to T'$ satisfying condition 3, the mapping $H:T\cup T'\to T\cup\Gamma$ which is the identity on $T$
and defined as $H(x')=(h^{-1}{x'},f(h^{-1}(x')))$for $x'\in T'$ satisfies condition 1. Thus conditions 1, 3 and 5 are equivalent.

Similarly, conditions 2, 4 and 5 are equivalent.

If conditions 1 and 3 are satisfied, we may assume that $T'=\Gamma$ and $h(x)=(x,f(x))$ for $x\in T$.
Then $tord(\gamma,T')=tord(\gamma',T)$ for any arcs $\gamma\subset T$ and $\gamma'=\{(x,f(x)):x\in\gamma\}\subset T'$.
Thus $h'=h^{-1}:T'\to T$ satisfies condition 2. This implies that all five conditions are equivalent.
\end{proof}

If conditions of Proposition \ref{map-graph} are satisfied then the pairs of arcs $(\gamma_1,\gamma'_1)$ and $(\gamma_2,\gamma'_2)$ are regular:
 \begin{equation}\label{tord-tord}
tord(\gamma_1,T')=tord(\gamma_1,\gamma'_1)=tord(\gamma'_1,T),\quad tord(\gamma_2,T')=tord(\gamma_2,\gamma'_2)=tord(\gamma'_2,T).
\end{equation}
In general, the opposite does not hold. However, Theorem \ref{theorem:very-elementary} below states that conditions of Proposition \ref{map-graph}
are satisfied if $T$ is elementary with respect to $f$ and (\ref{tord-tord}) holds.
The following Proposition from \cite{GS} is an important step in the proof of Theorem \ref{theorem:very-elementary}.

\begin{proposition}\label{Prop:2.20-GS}\emph{(see \cite[Proposition 2.20]{GS})}
Let $T=T(\gamma_1,\gamma_2)$ and $T'=T(\gamma'_1,\gamma'_2)$ be normally embedded $\beta$-H\"older triangles such that $tord(\gamma_1,\gamma'_1)\ge\alpha,\;tord(\gamma_2,\gamma'_2)\ge\alpha$, and $tord(\gamma,T')\ge\alpha$
for all arcs $\gamma\subset T$, for some $\alpha>\beta$.
Then there is a bi-Lipschitz homeomorphism $h: T\to T'$ such that $h(\gamma_1)=\gamma'_1,\;h(\gamma_2)=\gamma'_2$,
and $tord(h(\gamma),\gamma)\ge\alpha$ for any arc $\gamma\subset T$.
\end{proposition}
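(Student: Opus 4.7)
My plan is to construct $h$ by reducing to the standard model and matching the radial links of $T$ and $T'$ slice by slice. First, by the normal embedding of both triangles, I can choose outer bi-Lipschitz homeomorphisms $\psi\colon T_\beta \to T$ and $\psi'\colon T_\beta \to T'$ sending the boundary arcs of $T_\beta$ to $\gamma_1,\gamma_2$ and to $\gamma'_1,\gamma'_2$ respectively. The naive candidate $\psi'\circ\psi^{-1}$ is bi-Lipschitz, but its displacement $|h_0(x)-x|$ is only of order $|x|^\beta$, whereas we need order $|x|^\alpha$. So a finer construction, sensitive to the tangency hypothesis, is required.

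For each $t>0$ let $\ell_t=T\cap S_t$ and $\ell'_t=T'\cap S_t$, where $S_t$ is the sphere of radius $t$. By normal embedding, each of these is a bi-Lipschitz arc of length of order $t^\beta$ in $S_t$ with endpoints $\gamma_i(t)$ and $\gamma'_i(t)$. The hypothesis $tord(\gamma,T')\ge\alpha$ for every arc $\gamma\subset T$, combined with definability and a \L ojasiewicz-type inequality, yields the uniform pointwise bound $dist(x,T')\le C|x|^\alpha$ on $T$. Using the normal embedding of $T'$ to move a nearest point of $T'$ radially to the correct sphere $S_t$, this upgrades to $dist(x,\ell'_t)\le Ct^\alpha$ for every $x\in\ell_t$; together with $|\gamma_i(t)-\gamma'_i(t)|\le Ct^\alpha$, the arcs $\ell_t$ and $\ell'_t$ are Hausdorff-close in $S_t$ with matched endpoints. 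I would then define $h_t\colon\ell_t\to\ell'_t$ by proportional arc-length parametrization starting from $\gamma_1(t)$ and $\gamma'_1(t)$, producing a uniformly bi-Lipschitz bijection, and set $h(x)=h_{|x|}(x)$. The identity $h(\gamma_i)=\gamma'_i$ is immediate; the desired tangency estimate $tord(\gamma,h(\gamma))\ge\alpha$ for any arc $\gamma\subset T$ reduces to the pointwise displacement bound $|x-h_t(x)|\le Ct^\alpha$ applied at $x=\gamma(t)$.

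The main obstacle is verifying this pointwise displacement bound. Two Hausdorff-close bi-Lipschitz arcs with matched endpoints can a priori be ``out of phase'' along their arc-length parametrizations, so naive proportional matching does not automatically give $|x-h_t(x)|=O(t^\alpha)$. I would handle this by refining the construction through a pizza decomposition of $T$ associated with the Lipschitz function $f(x)=dist(x,T')$ (Proposition \ref{MP}), which by the hypothesis satisfies $f(x)\le C|x|^\alpha$: on each pizza slice $f$ has a controlled order and width, which lets one build $h_t$ from a definable selection of nearest points in $T'$ whose displacement along the slice is truly $O(t^\alpha)$, with the gluing across slices and across levels controlled by the normally embedded structure and the boundary conditions $tord(\gamma_i,\gamma'_i)\ge\alpha$. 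Assembling these pieces yields a map $h\colon T\to T'$ with uniform bi-Lipschitz constants across levels and displacement $O(|x|^\alpha)$, which delivers the required bi-Lipschitz homeomorphism satisfying $h(\gamma_i)=\gamma'_i$ and $tord(h(\gamma),\gamma)\ge\alpha$ for every arc $\gamma\subset T$.
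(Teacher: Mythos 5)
The present paper does not actually prove this proposition: it is imported verbatim from the reference \cite{GS} (Proposition 2.20 there), and no proof is reproduced here. The only hint the paper gives about how that proof goes appears in Lemma \ref{lem:pizzaslice}, Case 4, which opens with ``Using the same arguments as in the proof of \cite[Proposition 2.20]{GS}, we assume that $T' = T_\beta \subset \R^2$ is a standard $\beta$-H\"older triangle, $T \cup T' \subset \R^n$, and $\pi: T \to \R^2$ is an orthogonal projection.'' So the cited proof flattens $T'$ into $\R^2$, takes the orthogonal projection $\pi\colon T\to\R^2$, observes that the displacement of $\pi$ is automatically $O(|x|^\alpha)$ by the tangency hypothesis, identifies the locus where $\pi$ fails to be a smooth orientation-preserving local bi-Lipschitz map as a finite union of arcs and H\"older triangles of exponent at least $\alpha$, and repairs $\pi$ by explicit bi-Lipschitz gluing on slightly larger disjoint neighborhoods of those pieces. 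Your slice-by-slice, arc-length-matching construction is a genuinely different route.

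The gap in your proposal is precisely the one you flag and then leave open. Proportional arc-length parametrization of $\ell_t$ onto $\ell'_t$ controls only the bi-Lipschitz constant, not the displacement: two Hausdorff-$Ct^\alpha$-close arcs of length of order $t^\beta$ with matched endpoints can still be ``out of phase'' by a quantity of order $t^\beta$, because the Hausdorff distance says nothing about where matching points sit along the two arcs. So the key inequality $|x-h_t(x)|\le Ct^\alpha$ is not obtained. Your proposed repair---switching to ``a definable selection of nearest points in $T'$'' guided by a pizza decomposition for $f(x)=dist(x,T')$---is only a sketch and leaves two concrete problems untouched: (i) a nearest-point selection does have displacement $O(t^\alpha)$, but it need not be injective on a level set (folds can occur), so it is not automatically a bijection $\ell_t\to\ell'_t$, let alone a uniformly bi-Lipschitz one with Lipschitz dependence on $t$; (ii) the pizza decomposition records the orders of $f$ along arcs (all at least $\alpha$ here), but it does not locate where the nearest-point map folds or explain how to unfold it while preserving both the bi-Lipschitz bound and the $O(t^\alpha)$ displacement. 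Those are exactly the points where the \cite{GS} proof does the real work, by replacing the ad hoc nearest-point idea with an orthogonal projection onto a flat model of $T'$ whose singular locus is definable and of high exponent, and then repairing it locally. As written, the hardest step of the argument is still missing.
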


\begin{remark}\label{rem:beta}\normalfont
If a $\beta$-H\"older triangle $T=T(\gamma_1,\gamma_2)$ and a $\beta'$-H\"older triangle
$T'=T(\gamma'_1,\gamma'_2)$ are normally embedded and satisfy (\ref{tord-tord}) then $\beta'=\beta$, unless $tord(T,T')\le\min(\beta,\beta')$.
\end{remark}

\begin{lemma}\label{lem:min-contact} Let $T=T(\gamma_1,\gamma_2)$ and $T'=T(\gamma'_1,\gamma'_2)$ be normally embedded H\"older triangles.
Let $\lambda_1\ne\lambda_2$ be two arcs in $T$, and let $\theta_1\subset T',\;\theta_2\subset T'$ and $\theta\subset T(\theta_1,\theta_2)\subset T'$ be three arcs such that $tord(\theta,T)=q<\min(tord(\lambda_1,\theta_1),tord(\lambda_2,\theta_2))$.
Then there is an arc $\lambda\subset T(\lambda_1,\lambda_2)\subset T$ such that $tord(\lambda,T')\le q$.
\end{lemma}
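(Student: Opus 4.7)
The strategy is to construct $\lambda$ explicitly as an arc on which the distances to the two sub-triangles of $T'$ separated by $\theta$ coincide, and then to use normal embedding of $T'$ together with the hypothesis $tord(\theta,T)=q$ to bound the common distance from below by $t^q$.

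First, from $\lambda_i \subset T$ and $tord(\theta, T) = q$ we get $tord(\theta, \lambda_i) \leq q < tord(\lambda_i, \theta_i)$, so by the non-Archimedean property of $tord$ we have $tord(\theta, \theta_i) = tord(\theta, \lambda_i) \leq q$ for $i = 1, 2$. In particular $\theta$ is distinct from $\theta_1$ and $\theta_2$, and it divides $T'$ into two closed H\"older sub-triangles $T'(\gamma'_1, \theta) \supset \theta_1$ and $T'(\theta, \gamma'_2) \supset \theta_2$ meeting only along $\theta$. Now define the Lipschitz function $F(x) = d_2(x) - d_1(x)$ on $T$, where $d_1(x) = dist(x, T'(\gamma'_1,\theta))$ and $d_2(x) = dist(x, T'(\theta, \gamma'_2))$. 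Setting $q_i = tord(\lambda_i, \theta_i)$ and $r_i = tord(\theta_i, \theta) \leq q$, the Lipschitz estimate $d_1(\lambda_1(t)) \leq |\lambda_1(t) - \theta_1(t)| = O(t^{q_1})$ together with $d_2(\lambda_1(t)) \geq dist(\theta_1(t), T'(\theta, \gamma'_2)) - |\lambda_1(t) - \theta_1(t)| = \Theta(t^{r_1})$ (the lower bound using normal embedding of $T'$, so that the nearest point of $T'(\theta, \gamma'_2)$ to $\theta_1(t)$ lies on $\theta$) gives $F(\lambda_1(t)) > 0$ for small $t$; symmetrically $F(\lambda_2(t)) < 0$.

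For each small $t > 0$ the link $L_t = T(\lambda_1, \lambda_2) \cap \{|x| = t\}$ is a connected topological arc joining $\lambda_1(t)$ to $\lambda_2(t)$ (by inner bi-Lipschitz identification of $T(\lambda_1, \lambda_2)$ with a standard H\"older triangle, combined with normal embedding), so the intermediate value theorem yields a point $x_t \in L_t$ with $F(x_t) = 0$. The definable closed set $Z = \{F = 0\} \cap T(\lambda_1, \lambda_2)$ accumulates at the origin, so by the curve selection lemma its germ contains an arc $\tilde\lambda \subset T(\lambda_1, \lambda_2)$ on which $d_1(\tilde\lambda(t)) = d_2(\tilde\lambda(t)) = dist(\tilde\lambda(t), T')$ for all small $t$. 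To finish, I claim $tord(\tilde\lambda, T') \leq q$. Suppose for contradiction that $dist(\tilde\lambda(t), T') = o(t^q)$, and choose definable nearest points $y_1(t) \in T'(\gamma'_1, \theta)$ and $y_2(t) \in T'(\theta, \gamma'_2)$ to $\tilde\lambda(t)$, so that $|y_1(t) - y_2(t)| = o(t^q)$. By normal embedding the inner distance in $T'$ between $y_1(t)$ and $y_2(t)$ is also $o(t^q)$; since any path in $T'$ from $T'(\gamma'_1, \theta) \setminus \theta$ to $T'(\theta, \gamma'_2) \setminus \theta$ crosses $\theta$, this forces $dist(y_1(t), \theta) = o(t^q)$, hence $dist(\tilde\lambda(t), \theta) \leq |\tilde\lambda(t) - y_1(t)| + dist(y_1(t), \theta) = o(t^q)$. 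Then $tord(\tilde\lambda, \theta) > q$, and since $\tilde\lambda \subset T$ this gives $tord(\theta, T) > q$, contradicting the hypothesis. Hence $tord(\tilde\lambda, T') \leq q$ and $\tilde\lambda$ is the required arc.

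The main technical obstacle is justifying that the link $L_t$ is a connected topological arc from $\lambda_1(t)$ to $\lambda_2(t)$ for all sufficiently small $t$, which combines normal embedding with an inner bi-Lipschitz identification of $T(\lambda_1, \lambda_2)$ with a standard H\"older triangle so that outer-metric spheres cut $T(\lambda_1, \lambda_2)$ in a connected curve. The remaining steps --- curve selection for $\tilde\lambda$, and definable nearest-point selection for the $y_i(t)$ --- are routine in the polynomially bounded o-minimal setting.
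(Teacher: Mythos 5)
Your proof is correct and takes essentially the same approach as the paper's: both split $T'$ along $\theta$ into $T'_1=T(\gamma'_1,\theta)$ and $T'_2=T(\theta,\gamma'_2)$, show that $dist(\cdot,T')$ equals $dist(\cdot,T'_1)$ on $\lambda_1$ and $dist(\cdot,T'_2)$ on $\lambda_2$ via the non-Archimedean property, locate an arc $\lambda\subset T(\lambda_1,\lambda_2)$ where the two distances coincide, and close with a normal-embedding contradiction forcing $tord(\lambda,\theta)>q$. The paper states the existence of $\lambda$ and the final contradiction in compressed form, whereas you explicitly run the intermediate-value/curve-selection argument on $F=d_2-d_1$ and unpack the normal-embedding step with the nearest-point arcs $y_1,y_2$; the substance is the same.
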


\begin{proof}
We may assume that $\theta_1\subset T'_1=T(\gamma'_1,\theta)$ and $\theta_2\subset T'_2=T(\theta,\gamma'_2)$.
For $x\in T$, let $f_1(x)=dist(x,T'_1)$ and $f_2(x)=dist(x,T'_2)$. Then $f(x)=dist(x,T')=\min(f_1(x),f_2(x))$.
Since $tord(\theta,\lambda_1)\le tord(\theta,T)=q$ and $tord(\lambda_1,\theta_1) > q$, we have by the non-Archimedean property
$tord(\theta_1,\theta)=\min(tord(\lambda_1,\theta_1),tord(\lambda_1,\theta))\le q$.
Since $T'$ is normally embedded, we have $tord(\theta_1,T'_2)=tord(\theta_1,\theta)\le q$.
Since $tord(\lambda_1,\theta_1)>q$, this implies $tord(\lambda_1,T'_2)=\min(tord(\lambda_1,\theta_1),tord(\theta_1,T'_2))\le q$ by the non-Archimedean property.
Thus $f|_{\lambda_1}=f_1|_{\lambda_1}$.
Similarly,  $f|_{\lambda_2}=f_2|_{\lambda_2}$, thus there is an arc $\lambda\subset T(\lambda_1,\lambda_2)$ such that
$f|_\lambda=f_1|_\lambda=f_2|_\lambda$ (see Fig.~\ref{fig:min-contact}).
Then $tord(\lambda,T')=ord_\lambda f\le q$, otherwise we would have $tord(\lambda,T'_1)=tord(\lambda,T'_2)>q$.
Since $tord(\lambda,\theta)\le tord(\theta,T)=q$, this would contradict to $T'=T'_1\cup T'_2$ being normally embedded.
\end{proof}

\begin{corollary}\label{long-zone}
Let $T$ and $T' $ be normally embedded H\"older triangles. Let $\tilde T=T(\lambda_1,\lambda_2)\subset T$ be a $\beta$-H\"older triangle
such that $tord(\gamma,T')=q>\beta$ for any arc $\gamma\subset\tilde T$. If $\tilde T'=T(\theta_1,\theta_2)\subset T'$ is a $\beta$-H\"older
triangle such that $tord(\theta_1,\lambda_1)=tord(\theta_2,\lambda_2)=q$ then, for any arc $\theta\subset T'$ such that $tord(\theta,\theta_1)<q$ and
$tord(\theta,\theta_2)<q$, we have $tord(\theta,T)=q$.
\end{corollary}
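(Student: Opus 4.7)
My plan is to split the equality $tord(\theta,T)=q$ into the two inequalities $tord(\theta,T)\ge q$ and $tord(\theta,T)\le q$. In both cases I argue by contradiction, exploiting the non-Archimedean behaviour of the tangency order together with the normal embedding of $T$ and $T'$. Throughout, $\theta$ is understood to lie in $\tilde T'=T(\theta_1,\theta_2)$, which is what is needed for Lemma~\ref{lem:min-contact} to apply.

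For the lower bound, I suppose $tord(\theta,T)=p<q$. Since $p<q=\min(tord(\lambda_1,\theta_1),\,tord(\lambda_2,\theta_2))$ and $\theta\subset T(\theta_1,\theta_2)$, Lemma~\ref{lem:min-contact} produces an arc $\lambda\subset T(\lambda_1,\lambda_2)=\tilde T$ with $tord(\lambda,T')\le p<q$, directly contradicting the hypothesis $tord(\lambda,T')=q$ on $\tilde T$.

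For the upper bound, I suppose there exists $\gamma\subset T$ with $tord(\gamma,\theta)>q$. A short chain of non-Archimedean identities---using $tord(\theta,\theta_i)<q<tord(\gamma,\theta)$ and $tord(\theta_i,\lambda_i)=q$---yields $tord(\gamma,\theta_i)=tord(\theta,\theta_i)<q$ and then $tord(\gamma,\lambda_i)<q$ for $i=1,2$. If $\gamma\in\tilde T$, the estimate $tord(\gamma,T')\ge tord(\gamma,\theta)>q$ already contradicts the hypothesis. Otherwise, WLOG $\gamma\subset T(\gamma_1,\lambda_1)\setminus\{\lambda_1\}$, and the normal embedding of $T$ together with the separation of $\gamma$ from $\tilde T$ by $\lambda_1$ gives $tord(\gamma,\tilde\gamma)=\min(tord(\gamma,\lambda_1),tord(\lambda_1,\tilde\gamma))\le tord(\gamma,\lambda_1)<q$ for every $\tilde\gamma\in\tilde T$. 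I then invoke Proposition~\ref{Prop:2.20-GS} applied to $(\tilde T,\tilde T')$ with $\alpha=q$, producing a bi-Lipschitz $h\colon\tilde T\to\tilde T'$ with $h(\lambda_i)=\theta_i$ and $tord(h(\tilde\gamma),\tilde\gamma)\ge q$. Setting $\tilde\gamma^{*}:=h^{-1}(\theta)\in\tilde T$, the hypothesis $tord(\tilde\gamma^{*},T')=q$ forces $tord(\tilde\gamma^{*},\theta)=q$, and the ultrametric property for the triple $(\gamma,\theta,\tilde\gamma^{*})$ gives $tord(\gamma,\tilde\gamma^{*})\ge\min(tord(\gamma,\theta),q)=q$, contradicting the bound $tord(\gamma,\tilde\gamma^{*})<q$ just established.

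The principal obstacle is verifying the hypothesis of Proposition~\ref{Prop:2.20-GS}, namely $tord(\tilde\gamma,\tilde T')\ge q$ for every $\tilde\gamma\in\tilde T$. The corollary's hypothesis only gives $tord(\tilde\gamma,T')=q$, and the arc realizing this supremum could a priori lie outside $\tilde T'$. I would extract the sub-claim by a case analysis on the position of an optimal $\theta^{*}\in T'$ with $tord(\tilde\gamma,\theta^{*})=q$: the case $\theta^{*}\in\tilde T'$ is immediate, and if $\theta^{*}\in T(\gamma'_1,\theta_1)$ (say) then according to whether $tord(\theta^{*},\theta_1)\ge q$ or $<q$ one either concludes $tord(\tilde\gamma,\theta_1)\ge q$ by a single non-Archimedean step, or uses the hypothesis $tord(\lambda_1,T')=q$ together with the ultrametric property to rule such a $\theta^{*}$ out. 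This auxiliary case analysis is where most of the technical effort concentrates.
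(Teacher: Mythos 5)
Your proof follows the same route as the paper's: the lower bound $tord(\theta,T)\ge q$ comes from Lemma~\ref{lem:min-contact} by contradiction, and the upper bound comes from Proposition~\ref{Prop:2.20-GS} applied to $(\tilde T,\tilde T')$ with $\alpha=q$, combined with the non-Archimedean inequality and normal embedding of $T$. Your restriction ``$\theta$ lies in $\tilde T'$'' matches what the paper's proof actually uses (it is also necessary: for $\theta\in T'\setminus\tilde T'$ the conclusion can fail), so that is a sound reading of the statement. Where you diverge is in flagging the verification of the hypothesis of Proposition~\ref{Prop:2.20-GS}, namely that $tord(\tilde\gamma,\tilde T')\ge q$ for every $\tilde\gamma\subset\tilde T$ rather than merely $tord(\tilde\gamma,T')=q$; the paper invokes Proposition~\ref{Prop:2.20-GS} without comment, so you are being more careful at this point.

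However, the fix you sketch for that hypothesis does not close the gap. In the sub-case where an arc $\theta^{*}$ realizing $tord(\tilde\gamma,\theta^{*})=q$ lies in $T(\gamma'_1,\theta_1)\setminus\tilde T'$ with $tord(\theta^{*},\theta_1)<q$, you assert this is ``ruled out'' using $tord(\lambda_1,T')=q$ and the ultrametric property. But the ultrametric chain only yields the consistent relations $tord(\lambda_1,\theta^{*})=tord(\theta^{*},\theta_1)<q$ and $tord(\tilde\gamma,\lambda_1)=tord(\theta^{*},\theta_1)<q$; there is no contradiction to extract here. (The natural tool, Lemma~\ref{lem:min-contact} applied to $T(\tilde\gamma,\lambda_1)$ and $T(\theta^{*},\theta_1)$, requires knowing that \emph{some} arc of $T(\theta^{*},\theta_1)$ has $tord(\cdot,T)<q$, which you do not have.) So the auxiliary claim you isolate as ``the principal obstacle'' is indeed the right place to worry, but the argument you offer for it does not go through as written; you would need a substantively different argument there, or else simply cite Proposition~\ref{Prop:2.20-GS} on faith as the paper itself does.
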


\begin{proof} Lemma \ref{lem:min-contact} implies that $tord(\theta,T)\ge tord(\theta,\tilde T)\ge q$ for any arc $\theta\subset\tilde T'$.
If $\theta\subset\tilde T'$ is an arc such that $tord(\theta,\theta_1)<q$ and $tord(\theta,\theta_2)<q$,
Proposition \ref{Prop:2.20-GS} implies that $tord(\theta,T\setminus\tilde T)<q$.
If $tord(\theta,T)>q$ and $\gamma\subset T$ is an arc such that $tord(\gamma,\theta)>q$,
then $\gamma\subset\tilde T$ and $tord(\gamma,T')>q$, a contradiction. Thus $tord(\theta,T)=q$.
\end{proof}

\begin{definition}\label{def:triangles-oriented}\normalfont
Let $T$ and $T'$ be normally embedded oriented H\"older triangles.
A pair of $\beta$-H\"older triangles $\tilde T=T(\lambda_1,\lambda_2)\subset T$ and $\tilde T'=T(\theta_1,\theta_2)\subset T'$ in Corollary \ref{long-zone}
is called \emph{positively oriented} if their orientations induced from $T$ and $T'$ are either both the same as their orientations
from $\lambda_1$ to $\lambda_2$ and from $\theta_1$ to $\theta_2$ or both opposite.
Otherwise, $\tilde T$ and $\tilde T'$ is called a \emph{negatively oriented} pair of H\"older triangles.
\end{definition}

\begin{remark}\label{rem:oriented}\normalfont
Any pair of $\alpha$-H\"older triangles $T(\lambda'_1,\lambda'_2)\subset\tilde T$ and $T(\theta'_1,\theta'_2)\subset\tilde T'$, where $\alpha<q$, satisfying conditions $tord(\theta'_1,\lambda'_1)=tord(\theta'_2,\lambda'_2)=q$ is positively (resp., negatively) oriented if, and only if, the pair $(\tilde T,\tilde T')$ is positively (resp., negatively) oriented.
\end{remark}

\begin{proposition}\label{long-q-zone}
Let $T$ and $T' $ be normally embedded H\"older triangles with the distance functions $f(x)=dist(x,T')$ and $g(x')=dist(x',T)$.
Let $Z\subset V(T)$ be a maximal $q$-order zone for $f$ such that $\mu(Z)<q$.
Then there exists a unique maximal $q$-order zone $Z'\subset V(T')$ for $g$ such that $\mu(Z')=\mu(Z)$ and,
for any arc $\gamma\in Z$ such that $\nu_Z(\gamma,f)<q$ and any arc $\gamma'\subset T'$
such that $tord(\gamma,\gamma')=q$, we have $\gamma'\subset Z'$ and $\nu_{Z'}(\gamma',g)=\nu_Z(\gamma,f)$.
Conversely, if $\gamma'\in Z'$ is any arc such that $\nu_{Z'}(\gamma',g)<q$ then, for any arc $\gamma\subset T$
such that $tord(\gamma,\gamma')=q$, we have $\gamma\subset Z$ and $\nu_Z(\gamma,f)=\nu_{Z'}(\gamma',g)$.
\end{proposition}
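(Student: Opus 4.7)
The proof hinges on the following pivotal claim, which I would establish first by contradiction: for any $\gamma\in Z$ with $\nu_Z(\gamma,f)<q$ and any arc $\theta\subset T'$ with $tord(\gamma,\theta)=q$, one has $ord_\theta g = q$. Suppose not: $ord_\theta g>q$ yields $\gamma''\subset T$ with $tord(\theta,\gamma'')>q$, and the non-Archimedean property of tangency forces $tord(\gamma,\gamma'')=q$. By Lemma \ref{depth}, there is a $\nu$-H\"older triangle $T_0\subset T$ with $\gamma\in G(T_0)$, $V(T_0)\subset Z$, and $\mu(T_0)=\nu=\nu_Z(\gamma,f)<q$. Since $\gamma$ is generic in $T_0$ and $tord(\gamma,\gamma'')=q>\nu$, a direct computation in the standard model of $T_0$ places $\gamma''\in T_0\subset Z$, so $ord_{\gamma''}f=q$. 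But $f(\gamma'')\le|\gamma''-\theta|$ has order $tord(\gamma'',\theta)>q$, contradicting $ord_{\gamma''}f=q$.

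\textbf{Construction of $Z'$.} Pick any $\gamma\in Z$ with $\nu_Z(\gamma,f)<q$ --- such an arc exists since $Z$ is closed (Lemma \ref{width zone is closed}) and $\mu(Z)<q$, so some $\mu(Z)$-H\"older triangle sits inside $Z$ and any of its generic arcs qualifies --- together with any $\theta\subset T'$ satisfying $tord(\gamma,\theta)=q$. Set $Z'=W_{T'}(\theta,g)$. To verify independence of these choices, I take a depth triangle $T_0\subset Z$ with $\gamma\in G(T_0)$ and boundary arcs $\lambda_1,\lambda_2$ paired with arcs $\theta_i\subset T'$ at tangency $q$; Corollary \ref{long-zone} then furnishes a $\nu$-H\"older triangle $T_0'=T'(\theta_1,\theta_2)$ whose arcs of tangency $<q$ to both $\theta_1$ and $\theta_2$ all have $g$-order $q$, and Proposition \ref{Prop:2.20-GS} supplies a bi-Lipschitz homeomorphism $h:T_0\to T_0'$ with $tord(h(\delta),\delta)\ge q$ everywhere. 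Since $Z$ is a zone, any two admissible choices of $\gamma$ are joined by a H\"older sub-triangle in $Z$, and the corresponding triangles $T_0'$ overlap through their interior arcs --- all sitting inside a single maximal $q$-order zone for $g$.

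\textbf{Properties and converse.} For $\mu(Z')=\mu(Z)=\beta$: select generic arcs $\lambda_1',\lambda_2'$ of a $\beta$-H\"older triangle inside $Z$ with $tord(\lambda_1',\lambda_2')=\beta$, project to $\theta_i'\subset T'$ at tangency $q$ (these lie in $Z'$ by the key claim); non-Archimedean arithmetic yields $tord(\theta_1',\theta_2')=\beta$, hence $\mu(Z')\le\beta$, with the reverse inequality obtained by applying the construction symmetrically to $Z'$. For the depth identity $\nu_{Z'}(h(\gamma),g)=\nu_Z(\gamma,f)=\nu$: the triangle $T_0'$ with $h(\gamma)\in G(T_0')$ and interior arcs in $Z'$ gives $\nu_{Z'}\le\nu$, and the reverse again comes from the symmetric construction. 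The converse direction in the statement is nothing but the forward direction with $(T,Z,f)$ and $(T',Z',g)$ exchanged.

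\textbf{Main obstacle.} The most delicate step is the \emph{sharp} depth equality $\nu_{Z'}(h(\gamma),g)=\nu_Z(\gamma,f)$ --- as opposed to just $\le$ --- which relies on applying the symmetric construction legitimately to $Z'$. This requires having already established $\mu(Z')=\mu(Z)<q$ and the well-definedness of $Z'$, so that the hypotheses of the proposition are satisfied with $T'$ playing the role of $T$ (in particular, $\nu_{Z'}(h(\gamma),g)<q$). A secondary technical point is assembling the various triangles $T_0'$ obtained from different choices of $\gamma$ into a single zone; this uses the zone structure of $Z$ and the transport of projections along H\"older sub-triangles contained in $Z$.
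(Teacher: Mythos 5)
Your proof is correct and follows the same overall architecture as the paper's: construct the corresponding zone $Z'$ from the arcs of $T'$ at tangency order $q$ to the low-depth arcs of $Z$, argue it is the maximal $q$-order zone of the right order using Corollary~\ref{long-zone} and Proposition~\ref{Prop:2.20-GS}, then obtain the depth equality by symmetry and the non-Archimedean property. The one genuine departure is your opening key claim and its proof. The paper first builds $\tilde Z$ and $\tilde Z'$ and deduces $ord_{\gamma'}g=q$ on the transported triangle $\tilde T'$ by combining Lemma~\ref{lem:min-contact}, Proposition~\ref{Prop:2.20-GS} and Corollary~\ref{long-zone}; you instead prove $ord_\theta g=q$ directly by contradiction, using Lemma~\ref{depth} to place $\gamma$ generically in a $\nu$-triangle $T_0\subset Z$ and then exploiting the elementary bound $f(\gamma''(t))\le|\gamma''(t)-\theta(t)|$ to force $ord_{\gamma''}f>q$ for the arc $\gamma''\in T_0$ supplied by $ord_\theta g>q$. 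That contradiction argument is cleaner and more self-contained than the paper's route for that one step; the cost is that you still need the full machinery for well-definedness of $Z'$ and for the matching of depths, so the total effort is comparable. One small point to tighten: in the well-definedness paragraph you invoke Proposition~\ref{Prop:2.20-GS} to get $h:T_0\to T_0'$ with $tord(h(\delta),\delta)\ge q$; the hypothesis needed there is $tord(\delta,T_0')\ge q$ for all $\delta\subset T_0$, which is not what Lemma~\ref{lem:min-contact} hands you directly (it gives $tord(\theta,T_0)\ge q$ for $\theta\subset T_0'$). You should instead apply Proposition~\ref{Prop:2.20-GS} in the direction $T_0'\to T_0$ and take the inverse map, exactly as in the proof of Corollary~\ref{long-zone}.
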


\begin{proof}
Let $\tilde Z\subset Z$ be the set of all arcs $\gamma\subset Z$ such that $\nu_Z(\gamma,f)<q$.
Let $\lambda_1$ and $\lambda_2$ be any two arcs in $\tilde Z$ such that $\beta=tord(\lambda_1,\lambda_2)<q$.
Consider the $\beta$-H\"older triangle $\tilde T=T(\lambda_1,\lambda_2)\subset T$.
Since $Z$ is a zone, we have $V(\tilde T)\subset Z$, thus $tord(\gamma,T')=q$ for any arc $\gamma\subset\tilde T$.
Also, $\nu_Z(\gamma,f)\le\max(\nu_Z(\lambda_1,f),\nu_Z(\lambda_2,f))<q$ for any arc $\gamma\subset\tilde T$, thus $V(\tilde T)\subset\tilde Z$.
This implies that $\tilde Z$ is a $q$-order zone for $f$.
Let $\tilde Z'$ be the set of all arcs $\gamma'\subset T'$ such that $tord(\gamma,\gamma')=q$ for some arc $\gamma\subset\tilde Z$.

Let $\lambda'_1$ and $\lambda'_2$ be any two arcs in $T'$ such that $tord(\lambda_1,\lambda'_1)=tord(\lambda_2,\lambda'_2)=q$.
Since $\beta<q$, $\tilde T'=T(\lambda'_1,\lambda'_2)\subset T'$ is a $\beta$-H\"older triangle.
Since $\lambda_1\in\tilde Z$ and $\lambda_2\in\tilde Z$, we have $V(\tilde T')\subset\tilde Z'$.
It follows from Proposition \ref{Prop:2.20-GS} that $tord(\gamma,T'\setminus\tilde T')<q$ for any arc $\gamma\subset\tilde T$,
thus $tord(\gamma,\tilde T')=q$ for any arc $\gamma\subset\tilde T$.
Corollary \ref{long-zone} implies that $tord(\theta,\tilde T)=q$ for any arc $\theta\subset\tilde T'$.
This implies that $\tilde Z'$ is a $q$-order zone for $g$.
It follows from the non-Archimedean property that $\nu_{\tilde Z'}(\gamma',g)<q$ for any arc $\gamma'\in\tilde Z'$.

Let $Z'$ be the maximal $q$-zone for $g$ containing $\tilde Z'$. By the construction this zone is unique.
Let us show that $\nu_{Z'}(\gamma',g)\ge q$ for any arc $\gamma'\in Z'\setminus\tilde Z'$.
If $\gamma'\in Z'\setminus\tilde Z'$ and $\nu_{Z'}(\gamma',g)<q$, applying the same arguments as above to $Z'$ and $g$
instead of $Z$ and $f$, we can show that any arc $\gamma\subset T$ such that $tord(\gamma,\gamma')=q$ belongs to $Z$
and $\nu_Z(\gamma,f)<q$. Thus $\gamma\in\tilde Z$, which implies $\gamma'\in\tilde Z'$, a contradiction.
The equality $\nu_Z(\gamma,f)=\nu_{Z'}(\gamma',g)$ follows from the non-Archimedean property.
\end{proof}

\begin{corollary}\label{correspondance}
Let $T$ and $T' $ be normally embedded H\"older triangles with the distance functions $f(x)=dist(x,T')$ and $g(x')=dist(x',T)$.
For any $q\in\F$, the finite set $L_q$ of maximal $q$-order zones $Z\subset V(T)$ for $f$ such that $\mu(Z)<q$ is nonempty if, and only if, the set
$L'_q$ of maximal $q$-order zones $Z'\subset V(T')$ for $g$ such that $\mu(Z')<q$ is nonempty, and
there is a canonical one-to-one correspondence $Z'=\tau_q(Z)$ between the sets $L_q$ and $L'_q$ such that $tord(Z,\tau_q(Z))=q$.
\end{corollary}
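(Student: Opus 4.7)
The plan is to obtain the correspondence $\tau_q$ by applying Proposition \ref{long-q-zone} to each zone in $L_q$, and then to use the symmetry of that proposition to get its inverse. Given $Z\in L_q$, the hypothesis $\mu(Z)<q$ is exactly the assumption of Proposition \ref{long-q-zone}, which produces a unique maximal $q$-order zone $Z'\subset V(T')$ for $g$ with $\mu(Z')=\mu(Z)<q$; we set $\tau_q(Z):=Z'$. By construction $Z'\in L'_q$, and the proposition gives $tord(\gamma,\gamma')=q$ for a dense collection of pairs $\gamma\in Z,\;\gamma'\in Z'$ (namely those with $\nu_Z(\gamma,f)<q$ and any $\gamma'$ with $tord(\gamma,\gamma')=q$), so in particular $tord(Z,\tau_q(Z))=q$.

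To see that $\tau_q:L_q\to L'_q$ is a bijection, I would apply Proposition \ref{long-q-zone} with the roles of $T$ and $T'$ exchanged to get a map $\tau'_q:L'_q\to L_q$, and then verify $\tau'_q\circ\tau_q=\mathrm{id}_{L_q}$ and $\tau_q\circ\tau'_q=\mathrm{id}_{L'_q}$. Both equalities follow from the uniqueness statement in Proposition \ref{long-q-zone}: starting from $Z\in L_q$, the zone $\tau_q(Z)\in L'_q$ contains all arcs $\gamma'\subset T'$ with $tord(\gamma,\gamma')=q$ for some $\gamma\in Z$ with $\nu_Z(\gamma,f)<q$; applying the converse part of the proposition to $\tau_q(Z)$ shows $\tau'_q(\tau_q(Z))$ contains all corresponding arcs of $Z$ and has $\mu(\tau'_q(\tau_q(Z)))=\mu(\tau_q(Z))=\mu(Z)<q$, so by maximality $\tau'_q(\tau_q(Z))=Z$. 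This already yields the equivalence $L_q\neq\emptyset \Leftrightarrow L'_q\neq\emptyset$.

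It remains to argue that $L_q$ (hence $L'_q$) is finite. Here I would invoke the finiteness of the width multifunction: by Definition \ref{def:width function} the set $\mu_{T,f}(q)$ of width values attained by arcs $\gamma\subset T$ with $ord_\gamma f=q$ is finite. Each $Z\in L_q$ is a maximal $q$-order zone, so $\mu_T(\gamma,f)=\mu(Z)$ is constant on $Z$ and two distinct maximal zones give disjoint depth families; a definability/pizza-decomposition argument (using Proposition \ref{MP} applied to $f$ on any normally embedded subtriangle) bounds the number of such zones by the finite cardinality of the relevant width/depth data, yielding $|L_q|<\infty$.

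The main obstacle I anticipate is the bookkeeping in the bijectivity step: one must check carefully that the \emph{maximal} zone $\tau_q(Z)$ constructed by Proposition \ref{long-q-zone} is the same object regardless of which arcs $\gamma\in Z$ are used to test it, and that applying the proposition in reverse does not produce a strictly larger zone. Both points rest on the uniqueness clause of Proposition \ref{long-q-zone} together with the non-Archimedean property of $tord$, so the argument is conceptually light but requires a careful chase through the definitions.
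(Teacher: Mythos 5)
Your construction of the bijection $\tau_q$ via Proposition \ref{long-q-zone}, together with the symmetric application of that proposition and its uniqueness clause, is exactly the right approach and matches what the paper leaves implicit; the verification that $tord(Z,\tau_q(Z))=q$ is also correct (the inequality $\le q$ follows from $Z$ and $Z'$ being $q$-order zones for $f$ and $g$ respectively).

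Where your proposal is weaker is the finiteness step. The paper's entire written proof is a one-liner for precisely this point: each pizza slice $T_i$ of a pizza on $T$ associated with $f$ contains at most one zone from $L_q$, because $T_i$ is elementary with respect to $f$, so the arcs $\gamma\subset T_i$ with $ord_\gamma f=q$ form at most one zone; hence $|L_q|\le p$, the number of slices. Your version routes through the finiteness of the width multifunction $\mu_{T,f}(q)$ plus an unspecified ``definability/pizza-decomposition argument.'' As written this does not close the gap: finiteness of the set of width \emph{values} does not bound the number of distinct maximal $q$-order zones, since several such zones could share the same order $\mu(Z)$. The missing ingredient is exactly the slice-counting observation above. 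Replace the vague appeal to width/depth data with that observation and the argument is complete.
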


\begin{proof}
The finiteness of the set $L_q$ follows from the fact that, for a given $q\in\F$, each pizza slice of a pizza on $T$ associated with $f$ contains at most one zone from $L_q$.
\end{proof}

\begin{lemma}\label{oriented-zone}
Let $T$ and $T'$ be normally embedded oriented H\"older triangles.
Let $Z\subset V(T)$ and $Z'\subset V(T')$ be maximal $q$-order zones for $f$ and $g$ respectively, of orders $\mu(Z)=\mu(Z')<q$,
related as in Proposition \ref{long-q-zone} and Corollary \ref{correspondance}.
Then the pairs $(\tilde T,\tilde T')$ of H\"older triangles $\tilde T\subset T$ and $\tilde T'\subset T'$ related as in Corollary \ref{long-zone}, such that $V(\tilde T)\subset Z$ and $V(\tilde T')\subset Z'$, are either all positively oriented or all negatively oriented.
\end{lemma}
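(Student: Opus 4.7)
The plan is to produce a single enclosing pair $(\hat T,\hat T')$ of which both $(\tilde T_1,\tilde T_1')$ and $(\tilde T_2,\tilde T_2')$ appear as sub-pairs in the sense of Remark \ref{rem:oriented}; since sub-pairs inherit the orientation of the larger pair, the two given pairs will automatically share it. Throughout I will label so that $\tilde T_i=T(\lambda_1^i,\lambda_2^i)$ has $\lambda_1^i$ preceding $\lambda_2^i$ in the orientation of $T$ and $\tilde T_i'=T(\theta_1^i,\theta_2^i)$ with $tord(\lambda_j^i,\theta_j^i)=q$, so that pair $i$ is positively (resp.\ negatively) oriented according to whether $\theta_1^i$ precedes or follows $\theta_2^i$ in the orientation of $T'$.

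First I would build the enclosing triangle in $T$: let $\hat\lambda_1$ and $\hat\lambda_2$ be, respectively, the first and last of the four arcs $\{\lambda_j^i\}$ in the orientation of $T$. Since $Z$ is a zone containing each $\lambda_j^i$, the oriented H\"older triangle $\hat T=T(\hat\lambda_1,\hat\lambda_2)\subset T$ satisfies $V(\hat T)\subset Z$, contains $\tilde T_1\cup\tilde T_2$, and has exponent $\hat\beta=tord(\hat\lambda_1,\hat\lambda_2)<q$. Next, set $\hat\theta_r=\theta_{j_r}^{i_r}$ whenever $\hat\lambda_r=\lambda_{j_r}^{i_r}$; then $tord(\hat\lambda_r,\hat\theta_r)=q$, and two applications of the non-Archimedean property (using that $q>\hat\beta$) force $tord(\hat\theta_1,\hat\lambda_2)=\hat\beta$ and then $tord(\hat\theta_1,\hat\theta_2)=\hat\beta$. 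Thus $\hat T'=T(\hat\theta_1,\hat\theta_2)\subset T'$ is a $\hat\beta$-H\"older triangle in $Z'$ and $(\hat T,\hat T')$ is a pair of the form considered in Corollary \ref{long-zone} and Definition \ref{def:triangles-oriented}; write $\varepsilon\in\{\pm 1\}$ for its orientation.

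To transfer $\varepsilon$ to each $(\tilde T_i,\tilde T_i')$, I would apply Proposition \ref{Prop:2.20-GS} with $\alpha=q>\hat\beta$ to obtain a bi-Lipschitz homeomorphism $\hat h:\hat T\to\hat T'$ with $\hat h(\hat\lambda_r)=\hat\theta_r$ and $tord(\hat h(\gamma),\gamma)\ge q$ for every arc $\gamma\subset\hat T$. Restricted to $\tilde T_i$, this makes $(\tilde T_i,\hat h(\tilde T_i))$ a genuine sub-pair of $(\hat T,\hat T')$, of orientation $\varepsilon$ by Remark \ref{rem:oriented}. The non-Archimedean property gives $tord(\hat h(\lambda_j^i),\theta_j^i)\ge q$, so $\hat h(\lambda_j^i)$ and $\theta_j^i$ lie in a common closed perfect zone inside $Z'$ (Lemma \ref{depth} together with Proposition \ref{long-q-zone}). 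Lemma \ref{perfect-automorphism} then supplies, for $j=1,2$, an inner bi-Lipschitz automorphism of $T'$ supported in a small H\"older triangle of that perfect zone and sending $\hat h(\lambda_j^i)$ to $\theta_j^i$; being the identity off a triangle, each such automorphism preserves the orientation of $T'$. Composing $\hat h$ with these automorphisms yields a bi-Lipschitz map $\tilde T_i\to\tilde T_i'$ sending $\lambda_j^i$ to $\theta_j^i$, so $(\tilde T_i,\tilde T_i')$ has orientation $\varepsilon$ for both $i=1$ and $i=2$.

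The main obstacle is the last step: checking that the pair orientation is invariant under moving the boundary arcs of $\tilde T_i'$ within their depth zones inside $Z'$. This rests on the fact that the relevant depth zones are closed and perfect (Lemma \ref{depth} via Proposition \ref{long-q-zone}), so that Lemma \ref{perfect-automorphism} can realize the arc replacements through ambient automorphisms of $T'$ that are identity off a triangle and hence automatically orientation-preserving.
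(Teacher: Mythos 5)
Your plan---assemble an enclosing pair $(\hat T,\hat T')$ and transport the orientation $\varepsilon$ to each $(\tilde T_i,\tilde T_i')$ via Remark \ref{rem:oriented}---is exactly the paper's strategy. Where you go further is in noticing that the paper's one-line proof silently assumes $\tilde T_i'\subset\hat T'$, which is not automatic since $\theta_j^i$ is determined by $\lambda_j^i$ only up to a depth zone of order $\ge q$; your device of mapping $\hat T\to\hat T'$ by Proposition \ref{Prop:2.20-GS} so that $\bigl(\tilde T_i,\hat h(\tilde T_i)\bigr)$ \emph{is} a literal sub-pair, then repositioning, is a legitimate repair of that gap.

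Two points, one substantive. First, the application of Proposition \ref{Prop:2.20-GS} to $(\hat T,\hat T')$ requires its hypothesis $tord(\gamma,\hat T')\ge q$ for every arc $\gamma\subset\hat T$; you never verify it, and it is not a triviality---in fact it is where the real content of ``$\hat T'$ is aligned with $\hat T$'' lives. The verification has to come from the matching of the zones $Z$ and $Z'$ via Proposition \ref{long-q-zone} / Corollary \ref{long-zone} (with $V(\hat T)\subset Z$ and $V(\hat T')\subset Z'$), and it deserves a sentence; as written this is an unclosed gap in your argument. Second, the perfect-zone automorphism machinery in the final step is more than you need. Once $\hat h$ exists, the non-Archimedean property gives $tord\bigl(\hat h(\lambda_j^i),\theta_j^i\bigr)\ge q>\beta_i=tord(\theta_1^i,\theta_2^i)$; in a normally embedded $T'$, arcs at mutual tangency $>\beta_i$ cannot be separated (in the linear order along $T'$) by arcs at tangency $\beta_i$, so $\theta_1^i,\theta_2^i$ occur in $T'$ in the same order as $\hat h(\lambda_1^i),\hat h(\lambda_2^i)$. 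That identity of orders is precisely the statement that $(\tilde T_i,\tilde T_i')$ and $(\tilde T_i,\hat h(\tilde T_i))$ have the same orientation, so Lemmas \ref{depth} and \ref{perfect-automorphism} are not needed at all.
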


\begin{proof} This follows from Remark \ref{rem:oriented}, since for any two pairs of H\"older triangles in Lemma \ref{oriented-zone}
there is a larger pair of H\"older triangles containing both of them and satisfying conditions of Corollary \ref{long-zone}.
\end{proof}

\begin{definition}\label{def:zones-oriented}\normalfont
The pair of zones $Z\subset V(T)$ and $Z'\subset V(T')$ in Lemma \ref{oriented-zone} is called \emph{positively oriented} (resp., \emph{negatively oriented}) if the pairs $(\tilde T,\tilde T')$ of H\"older triangles $\tilde T\subset T$ and $\tilde T'\subset T'$ in
Lemma \ref{oriented-zone} are positively oriented (resp., negatively oriented).
\end{definition}

\begin{figure}
\centering
\includegraphics[width=5in]{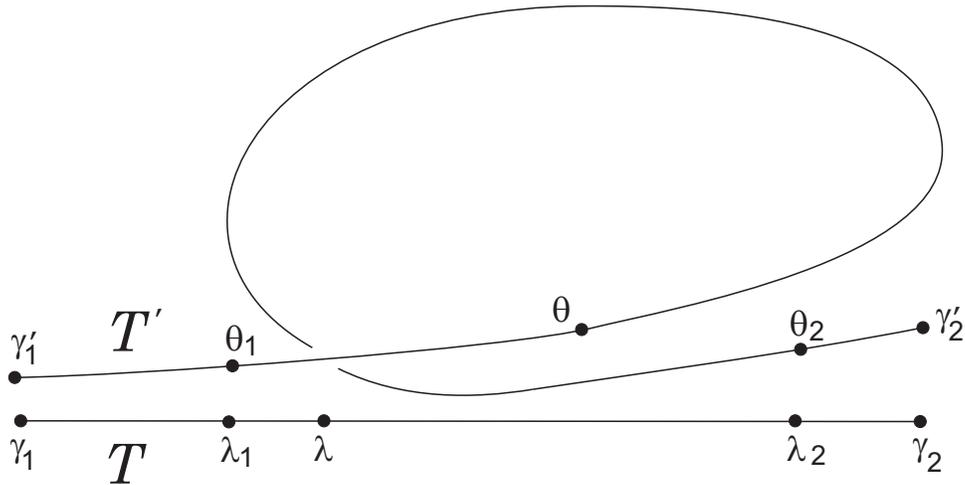}
\caption{Illustration to the proof of Lemma \ref{lem:min-contact}.}\label{fig:min-contact}
\end{figure}
\begin{figure}
\centering
\includegraphics[width=5in]{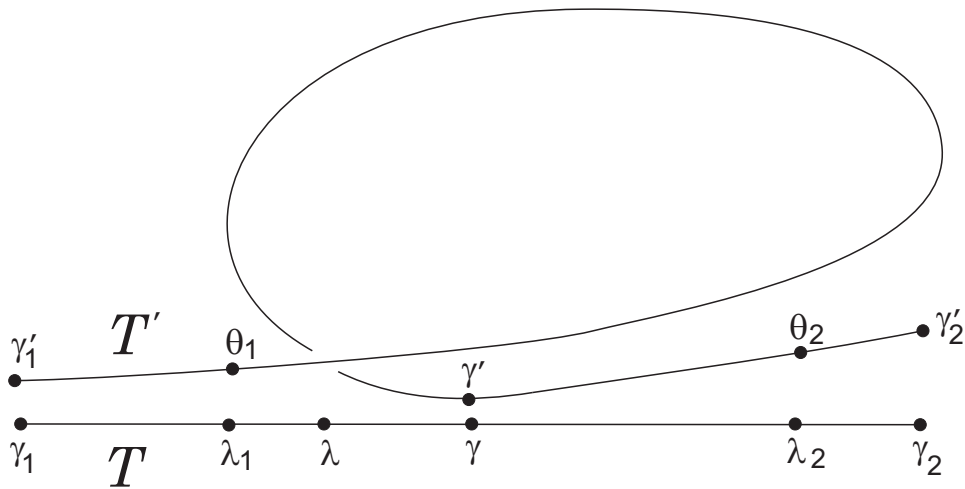}
\caption{Illustration to the proof of Lemma \ref{lem:elementary}.}\label{fig:elementary}
\end{figure}

\begin{lemma}\label{lem:elementary}
Let $T=T(\gamma_1,\gamma_2)$ and $T'=T(\gamma'_1,\gamma'_2)$ be two normally embedded H\"older triangles, such that
$T$ is elementary with respect to $f(x)=dist(x,T')$ and $tord(\gamma_1,\gamma'_1)=tord(T,T')$.
Then $T'$ is elementary with respect to $g(x')=dist(x',T)$.
\end{lemma}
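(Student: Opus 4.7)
The strategy is to argue by contradiction, using the bijective correspondence between maximal $q$-order zones on $T$ and on $T'$ established in Proposition~\ref{long-q-zone} and Corollary~\ref{correspondance}. Suppose $T'$ is not elementary with respect to $g$. Then there exist an order $q \in Q_g(T')$ and arcs $\theta_1, \theta_2 \in V(T')$ with $ord_{\theta_1} g = ord_{\theta_2} g = q$, while some arc $\theta' \in V(T'(\theta_1,\theta_2))$ has $ord_{\theta'} g \ne q$. Consequently $\theta_1$ and $\theta_2$ must lie in two distinct maximal $q$-order zones $Z'_1 \ne Z'_2$ of $g$ on $T'$.

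My first task is to reduce to the situation where $\mu(Z'_j) < q$ for both $j = 1, 2$, so that $Z'_1, Z'_2 \in L'_q$ and Corollary~\ref{correspondance} applies. If $Z'_j$ is a \emph{fat} zone with $\mu(Z'_j) = q$, it is realized on a $q$-H\"older subtriangle of $T'$ on which $g$ has constant order $q$; inside any pizza slice containing this fat zone, Proposition~\ref{prop:width function properties} shows that the width function $\mu_{T_i, g}$ takes values strictly less than $q$ on arcs of $Z'_j$ approaching an appropriate boundary, so one may replace $\theta_j$ by such an arc $\eta_j \in Z'_j$ with width $< q$. The condition $ord_{\theta'} g \neq q$ for the separating arc ensures $\eta_1$ and $\eta_2$ still belong to distinct maximal $q$-order zones.

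Once the reduction is achieved, Corollary~\ref{correspondance} produces, via the inverse of the canonical bijection $\tau_q : L_q \to L'_q$, two distinct maximal $q$-order zones $Z_1 \ne Z_2$ for $f$ on $T$, each of order strictly less than $q$. But the elementarity of $T$ with respect to $f$ states exactly that the set $\{\gamma \in V(T) : ord_\gamma f = q\}$ is a single connected zone: any two arcs of order $q$ in $T$ are joined by a sub-triangle all of whose arcs have order $q$. Hence $T$ has a unique maximal $q$-order zone, contradicting $Z_1 \ne Z_2$. This contradiction will establish that $T'$ is elementary with respect to $g$.

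I expect the delicate step to be the reduction to the case $\mu(Z'_j) < q$, since fat zones lie precisely outside the scope of Corollary~\ref{correspondance}. The boundary hypothesis $tord(\gamma_1,\gamma'_1) = tord(T,T')$ enters here to align the global structure: it forces $ord_{\gamma_1} f = tord(T,T')$ to be the maximum of $ord\, f$ on $T$, and combined with the elementarity of $T$, it makes $ord_\gamma f$ monotone non-increasing from $\gamma_1$ to $\gamma_2$. Together with the orientation-preserving correspondence from Lemma~\ref{oriented-zone} and Remark~\ref{rem:oriented}, this monotone arrangement should be sufficient to match fat zones on $T'$ with fat zones on $T$ and thereby ensure that the reduction can always be carried out consistently.
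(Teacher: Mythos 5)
Your proposal takes a genuinely different route from the paper, but it has a gap that I don't see how to close.

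\textbf{Where the approach diverges.} The paper proves the lemma directly, applying Lemma~\ref{lem:min-contact} twice: once to show that an interior arc $\gamma'\subset T(\theta_1,\theta_2)\subset T'$ cannot have $tord(\gamma',T)<q$, and once (this is where the boundary hypothesis $tord(\gamma_1,\gamma'_1)=tord(T,T')$ enters) to rule out $tord(\gamma',T)>q$. Your proposal instead argues globally via the zone correspondence of Proposition~\ref{long-q-zone} and Corollary~\ref{correspondance}, counting maximal $q$-order zones. The overall logic — ``two separated $q$-order zones for $g$ on $T'$ would force two for $f$ on $T$, contradicting elementarity'' — is sound in spirit, and using Corollary~\ref{correspondance} at this point is not circular (it precedes the lemma).

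\textbf{The gap.} Your reduction to the case $\mu(Z'_j)<q$ does not work. You claim one can ``replace $\theta_j$ by an arc $\eta_j\in Z'_j$ with width $<q$''; but by Definition~\ref{q-zone}, the width zone $W_{T'}(\eta,g)$ is the maximal $q$-order zone containing $\eta$, so for \emph{every} arc $\eta\in Z'_j$ one has $W_{T'}(\eta,g)=Z'_j$ and hence $\mu_{T'}(\eta,g)=\mu(Z'_j)$. The width is constant on a width zone; there is no arc of $Z'_j$ with width $<q$ when $\mu(Z'_j)=q$. And the transversal case $\mu(Z'_j)=q$ is not vacuous — it occurs precisely in the transversal pizza slices of Definition~\ref{def:pizzaslicezone-transversal}, where $\mu_{\ell}(q)\equiv q$ — and Proposition~\ref{long-q-zone} and Corollary~\ref{correspondance} are explicitly restricted to zones with $\mu<q$, so these zones simply fall outside the correspondence you invoke. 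Your final paragraph acknowledges this is the delicate step, but the proposed repair (monotonicity of $ord\,f$ on $T$ plus Lemma~\ref{oriented-zone}) does not supply a correspondence between transversal zones; the paper only sets up such a correspondence later (Proposition~\ref{pizzaslice-oriented}), and that later result depends on Theorem~\ref{theorem:very-elementary}, which in turn relies on this very Lemma~\ref{lem:elementary} — so any such appeal would be circular. The direct argument via Lemma~\ref{lem:min-contact}, as in the paper, avoids these difficulties entirely.
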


\begin{proof} We have to show that, for any H\"older triangle $T''=T(\theta_1,\theta_2)\subset T'$
such that $tord(\theta_1,T)=tord(\theta_2,T)=q$, we have $tord(\gamma',T)=q$ for each arc $\gamma'\subset T''$.
Let us show first that $q'=tord(\gamma',T)\ge q$ for each arc $\gamma'\subset T''$.
If $q'<q$, let $\lambda_1$ and $\lambda_2$ be arcs in $T$ such that $tord(\lambda_1,\theta_1)=tord(\lambda_2,\theta_2)=q$.
Lemma \ref{lem:min-contact} implies that there is an arc $\lambda\subset T(\lambda_1,\lambda_2)$ such that
$tord(\lambda,T')\le q'<q$, a contradiction with $T$ being elementary with respect to $f$.

Suppose now that $q'>q$.
We may assume that $\theta_1\subset T(\gamma'_1,\gamma')\subset T'$.
Since $tord(\gamma_1,\gamma'_1)=tord(T,T')$, we have $tord(\gamma_1,\gamma'_1)\ge q'$.
Let $\gamma\subset T$ be an arc such that $tord(\gamma,\gamma')=q'$ (see Fig.~\ref{fig:elementary}).
Then Lemma \ref{lem:min-contact} applied to $T(\gamma_1,\gamma)\subset T$ and $T(\gamma'_1,\gamma')\subset T'$ implies that there is an arc
$\lambda\subset T(\gamma_1,\gamma)$ such that $tord(\lambda,T')\le q$,  a contradiction with $T$ being elementary with respect to $f$.
\end{proof}

\begin{corollary}\label{elementary-elementary}
Let $T=T(\gamma_1,\gamma_2)$ and $T'=T(\gamma'_1,\gamma'_2)$ be normally embedded H\"older triangles, such that
$T$ is elementary with respect to $f(x)=dist(x,T')$ and $tord(\gamma_1,\gamma'_1)=tord(T,T')$.
Then, for any two H\"older triangles $\tilde T=T(\gamma_1,\lambda)\subset T$ and $\tilde T'=T(\gamma'_1,\lambda')\subset T'$,
$\tilde T$ is elementary with respect to $\tilde f(x)=dist(x,\tilde T')$ and
$\tilde T'$ is elementary with respect to $\tilde g(x')=dist(x',\tilde T)$.
\end{corollary}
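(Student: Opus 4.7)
The strategy is to reduce to Lemma \ref{lem:elementary} applied to the pair $(\tilde T,\tilde T')$. First, I verify the boundary condition: since $\tilde T\subset T$ and $\tilde T'\subset T'$ we have $tord(\tilde T,\tilde T')\le tord(T,T')=tord(\gamma_1,\gamma'_1)$, while $\gamma_1\in\tilde T$ and $\gamma'_1\in\tilde T'$ give the reverse inequality. Hence $(\tilde T,\tilde T')$ satisfies the analog of (\ref{tord-tord}). Once elementarity of $\tilde T$ with respect to $\tilde f$ is established, Lemma \ref{lem:elementary} applied to $(\tilde T,\tilde T')$ immediately yields that $\tilde T'$ is elementary with respect to $\tilde g$.

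The substantive step is showing that $\tilde T$ is elementary with respect to $\tilde f$. Write $T'=\tilde T'\cup T''$ with $T''=T(\lambda',\gamma'_2)$, and set $f^*(x)=dist(x,T'')$. Then $f=\min(\tilde f,f^*)$, so for every arc $\eta\subset T$ one has $ord_\eta f=\max(ord_\eta\tilde f,ord_\eta f^*)$, and in particular $ord_\eta\tilde f\le ord_\eta f$. I argue by contradiction, mirroring the proof of Lemma \ref{lem:elementary}. Suppose a H\"older triangle $T(\eta_1,\eta_2)\subset\tilde T$ satisfies $ord_{\eta_1}\tilde f=ord_{\eta_2}\tilde f=q$ while some $\eta_3\in T(\eta_1,\eta_2)$ has $ord_{\eta_3}\tilde f=q'\ne q$.

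The argument splits by the sign of $q'-q$. In the case $q'>q$, if $ord_{\eta_1}f=ord_{\eta_2}f=q$ then the elementarity of $T$ with respect to $f$ forces $ord_{\eta_3}f=q$, contradicting $ord_{\eta_3}f\ge ord_{\eta_3}\tilde f=q'>q$. Otherwise some $\eta_i$ (say $\eta_1$) satisfies $ord_{\eta_1}f^*>q$, yielding an arc $\theta_1\in T''$ with $tord(\eta_1,\theta_1)>q$. Pick also $\mu\in\tilde T'$ with $tord(\eta_3,\mu)=q'>q$. Since $\theta_1\in T''$ and $\mu\in\tilde T'$ lie on opposite sides of $\lambda'$ in $T'$, the arc $\lambda'$ belongs to $T(\theta_1,\mu)\subset T'$. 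Selecting an arc $\theta\in T(\theta_1,\mu)$ near $\lambda'$ with $tord(\theta,T)<\min(tord(\eta_1,\theta_1),q')$ and applying Lemma \ref{lem:min-contact} to $T,T'$ with $\lambda_1=\eta_1$ and $\lambda_2=\eta_3$ produces an arc $\lambda\in T(\eta_1,\eta_3)\subset T$ with $tord(\lambda,T')<q$, again contradicting the elementarity of $T$ with respect to $f$. The case $q'<q$ is handled by a symmetric construction, using arcs $\theta_1\in\tilde T'$ with $tord(\eta_i,\theta_i)=q$ directly in Lemma \ref{lem:min-contact}.

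The main obstacle is the construction of the auxiliary arc $\theta\in T(\theta_1,\mu)$ with sufficiently small $tord(\theta,T)$ in the case where $f$ and $\tilde f$ diverge in order on $\eta_1,\eta_2$. This step requires the boundary hypothesis $tord(\gamma_1,\gamma'_1)=tord(T,T')$ to control tangency behavior on $T'$ as one crosses $\lambda'$, together with the elementarity of $T'$ with respect to $g$ (already granted by Lemma \ref{lem:elementary} applied to $(T,T')$) and a non-Archimedean argument to ensure the requisite drop of $tord(\cdot,T)$ across the separation between the $\tilde T'$- and $T''$-sides.
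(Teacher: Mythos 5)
Your proposal correctly verifies the boundary condition for $(\tilde T,\tilde T')$ and correctly observes that, once elementarity of $\tilde T$ with respect to $\tilde f$ is known, Lemma \ref{lem:elementary} supplies the other half. But the ``substantive step'' you describe --- establishing elementarity of $\tilde T$ with respect to $\tilde f$ --- is left unfinished. You reduce it to finding an arc $\theta\in T(\theta_1,\mu)\subset T'$ with $tord(\theta,T)$ strictly below $\min(tord(\eta_1,\theta_1),q')$ (and in fact, to get the claimed conclusion $tord(\lambda,T')<q$, below $q$ itself), and then merely list ingredients that such a construction ``requires'' without carrying it out. It is not evident that such a $\theta$ exists: nothing you've written excludes the possibility that $tord(\theta,T)\ge q$ for every arc $\theta$ in $T(\theta_1,\mu)$, including $\lambda'$ itself. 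As it stands this is a genuine gap at the crux of the argument.

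The paper avoids the difficulty entirely by restricting the two triangles one at a time rather than both at once. It first applies Lemma \ref{lem:elementary} to the pair $(\tilde T, T')$ --- here elementarity of $\tilde T$ with respect to $f|_{\tilde T}=dist(\cdot,T')$ is automatic because $\tilde T\subset T$ and $T$ is elementary, and the boundary condition $tord(\gamma_1,\gamma'_1)=tord(\tilde T,T')$ is checked exactly as you checked it for $(\tilde T,\tilde T')$ --- to conclude that $T'$ is elementary with respect to $dist(\cdot,\tilde T)$; restriction to $\tilde T'\subset T'$ then gives that $\tilde T'$ is elementary with respect to $\tilde g$. A second application of Lemma \ref{lem:elementary}, now to the pair $(\tilde T',\tilde T)$, yields that $\tilde T$ is elementary with respect to $\tilde f$. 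No new analysis of the distance functions is required; both halves of the conclusion are direct instances of the Lemma. The move you are missing is the first application, with $T'$ kept whole.
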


\begin{proof}
Since $\tilde T$ is elementary with respect to $f|_{\tilde T}$, Lemma \ref{lem:elementary} applied to $\tilde T$ instead of $T$ implies that
$T'$ is elementary with respect to $h(x')=dist(x',\tilde T)$.
Thus $\tilde T'$ is elementary with respect to $\tilde g=h|_{\tilde T'}$.
Lemma \ref{lem:elementary} applied to $\tilde T'$ instead of $T$ and $\tilde T$ instead of $T'$
implies that $\tilde T$ is elementary with respect to $\tilde f$.
\end{proof}

\begin{lemma}\label{lem:pizzaslice}
Let $T=T(\gamma_1,\gamma_2)$ and $T'=T(\gamma'_1,\gamma'_2)$ be normally embedded $\beta$-H\"older triangles satisfying (\ref{tord-tord}).
Suppose that $T$ is a pizza slice associated with $f(x)=dist(x,T')$.
Then conditions of Proposition \ref{map-graph} are satisfied for $T$ and $T'$.
Moreover,  $\mu_{T,f}\equiv\mu_{T',g}$, where $\mu_{T,f}(q)$ and $\mu_{T',g}(q)$ are the width functions defined on $Q_f(T)=Q_g(T')$.
\end{lemma}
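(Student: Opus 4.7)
My plan is to deduce elementarity of $T'$ with respect to $g$ from Lemma~\ref{lem:elementary}, match the widths via the zonal correspondence of Corollary~\ref{correspondance}, and then build $h$ by iterated application of Proposition~\ref{Prop:2.20-GS}.

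Swapping labels if needed, I may assume $\gamma_1$ is the supporting arc of the pizza slice $T$, so $q_+ := ord_{\gamma_1}f$ is the maximum of $Q_f(T)$ and equals $tord(T,T')$. Condition (\ref{tord-tord}) then reads $tord(\gamma_1,\gamma'_1) = tord(T,T')$, so Lemma~\ref{lem:elementary} gives that $T'$ is elementary with respect to $g$. Since (\ref{tord-tord}) also forces $ord_{\gamma'_1}g = q_+$ and $ord_{\gamma'_2}g = q_- := ord_{\gamma_2}f$, elementarity of $T'$ yields $Q_g(T') = [q_-,q_+] = Q_f(T)$. Write $Z_q$ (resp.\ $Z'_q$) for the unique maximal $q$-order zone in $V(T)$ (resp.\ $V(T')$), with zone-order $\mu_{T,f}(q)$ (resp.\ $\mu_{T',g}(q)$). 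By Proposition~\ref{prop:width function properties}(1) and the affine character of $\mu_{T,f}$, one has $\mu_{T,f}(q) < q$ off at most the two endpoints of $Q_f(T)$; at every such $q$, Corollary~\ref{correspondance} together with Proposition~\ref{long-q-zone} identifies the canonical matching zone in $V(T')$ with $Z'_q$ and yields $\mu_{T',g}(q) = \mu_{T,f}(q)$. Equality at the endpoints is read off directly from (\ref{tord-tord}) applied to $\gamma'_1$ and $\gamma'_2$. Consequently $\mu_{T',g}$ is affine and equal to $\mu_{T,f}$, so $T'$ is a pizza slice with the same width data, which is the last assertion of the lemma.

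To construct the bi-Lipschitz map $h:T\to T'$ required by Proposition~\ref{map-graph}, I iterate Proposition~\ref{Prop:2.20-GS}. Choose $q_+ = s_0 > s_1 > \cdots \to q_-$ with $s_i > \mu_{T,f}(s_i)$, and arcs $\lambda_i\in Z_{s_i}$, $\lambda'_i\in Z'_{s_i}$ satisfying $tord(\lambda_i,\lambda'_i) = s_i$ (from Corollary~\ref{correspondance}). By Proposition~\ref{prop:width function properties}(3), $tord(\gamma_1,\lambda_i) = \mu_{T,f}(s_i)$ and similarly $tord(\gamma'_1,\lambda'_i) = \mu_{T,f}(s_i)$, so the sub-triangles $\tilde T_i := T(\gamma_1,\lambda_i)$ and $\tilde T'_i := T(\gamma'_1,\lambda'_i)$ share the exponent $\mu_{T,f}(s_i)$. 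Every arc of $\tilde T_i$ has $f$-order $q \ge s_i$, and its matching zone $Z'_q$ lies in $\tilde T'_i$, so $tord(\gamma,\tilde T'_i) \ge s_i$; Proposition~\ref{Prop:2.20-GS} with $\alpha = s_i$ thus produces a bi-Lipschitz $h_i:\tilde T_i\to\tilde T'_i$ sending boundary to boundary with $tord(\gamma,h_i(\gamma))\ge s_i$. Choosing $h_{i+1}$ to extend $h_i$ across the annular piece $T(\lambda_i,\lambda_{i+1})$ (again via Proposition~\ref{Prop:2.20-GS}) and passing to the limit assembles the $h_i$ into $h:T\to T'$. Combining $tord(\gamma,h(\gamma)) \ge ord_\gamma f$ with the automatic reverse bound $tord(\gamma,h(\gamma)) \le tord(\gamma,T') = ord_\gamma f$ makes every pair $(\gamma,h(\gamma))$ regular, which is condition~5 of Proposition~\ref{map-graph}.

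The principal obstacle is the inductive gluing: maintaining uniform bi-Lipschitz constants for the assembled $h$ as $s_i\to q_-$, and in particular handling the degenerate outermost layer when $q_-\le\beta$ or $\mu_{T,f}(q_+) = q_+$, where the hypothesis $\alpha>\beta$ of Proposition~\ref{Prop:2.20-GS} fails; in these cases one must revert to the general inner bi-Lipschitz equivalence of normally embedded $\beta$-H\"older triangles to handle the leftover strip before gluing.
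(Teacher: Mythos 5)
Your plan replaces the paper's case analysis with a single scheme (zone correspondence for the width functions, then iterated application of Proposition~\ref{Prop:2.20-GS} over a decreasing sequence $s_0>s_1>\cdots\to q_-$). The paper instead splits into six cases according to the shape of the affine function $\mu_{T,f}$; the hard case ($\mu(q)<q$ on the interior) is handled not by iterating Proposition~\ref{Prop:2.20-GS}, but by placing $T'$ as the standard triangle $T_\beta\subset\R^2$, taking the orthogonal projection $\pi:T\to\R^2$, identifying the \emph{finite} set $S\subset T$ where $\pi$ is not a smooth orientation-preserving map (a finite union of arcs and H\"older triangles on which $f$ turns out to have constant order by normality of $T$), and replacing $\pi$ on finitely many disjoint triangles $\tilde T_j\supset T_j$ by local maps from Proposition~\ref{Prop:2.20-GS}. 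That single projection-plus-finite-repair step is exactly what makes the bi-Lipschitz constant controllable.

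Your write-up has two genuine gaps.

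First, the claim that ``$\mu_{T,f}(q)<q$ off at most the two endpoints of $Q_f(T)$'' does not follow from Proposition~\ref{prop:width function properties}(1) together with affinity; that proposition only gives $\beta\le\mu_{T,f}(q)\le\max(q,\beta)$, which is compatible with $\mu_{T,f}(q)\equiv q$ on all of $Q_f(T)$ (this is precisely the paper's Case~3, occurring whenever $q_-=\beta$ and the slope of $\mu_{T,f}$ is $1$). In that case there is no $q$ in the interior to which Proposition~\ref{long-q-zone} and Corollary~\ref{correspondance} apply, so the zone-matching argument for $\mu_{T,f}\equiv\mu_{T',g}$ collapses completely. (Also, ``read off directly from (\ref{tord-tord})'' only yields the orders $ord_{\gamma'_1}g$, $ord_{\gamma'_2}g$, not the widths $\mu_{T',g}$ at the endpoints; those need an argument.) Case~3 has to be treated separately, as the paper does, by the observation that in this regime $ord_\gamma f=tord(\gamma,\gamma_1)$, so any boundary-preserving bi-Lipschitz $h$ automatically has $tord(\gamma,h(\gamma))=ord_\gamma f$.

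Second, the iterated gluing across infinitely many annular pieces $T(\lambda_i,\lambda_{i+1})$ is not a proof. Proposition~\ref{Prop:2.20-GS} gives no control on the bi-Lipschitz constant as $\alpha\downarrow\beta$, and you also need the map on each annulus to agree on its inner boundary with the previously constructed map, which Proposition~\ref{Prop:2.20-GS} does not supply (it prescribes images of the boundary \emph{arcs}, not a given bi-Lipschitz map on them). You flag the uniform-constant issue yourself, but it is not a technical nicety to be deferred: without a uniform bound the ``limit'' $h$ need not be Lipschitz, and without compatibility on overlaps it need not even be well defined. This is precisely the obstacle that the paper's projection argument is designed to bypass — it produces one globally defined Lipschitz map $\pi$ and then modifies it on only finitely many disjoint pieces.

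What does survive: inferring elementarity of $T'$ with respect to $g$ from Lemma~\ref{lem:elementary} is fine, and using Proposition~\ref{long-q-zone} and Corollary~\ref{correspondance} to match interior width-zones when $\mu_{T,f}(q)<q$ is sound and is in fact very much in the spirit of how Proposition~\ref{pizzaslice-oriented} later reuses this lemma. But as a proof of Lemma~\ref{lem:pizzaslice} itself, the construction of $h$ needs to be replaced by something with controlled constants — either the paper's finite projection repair, or a genuinely finite decomposition — and the width-function identity has to be obtained either as a corollary of the existence of such an $h$ (as the paper does at the very end), or with a separate argument covering the $\mu\equiv q$ regime.
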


\begin{proof}
Since the five conditions of Proposition \ref{map-graph} are equivalent, it is enough to prove condition 3:
there is a bi-Lipschitz homeomorphism $h:T\to T'$ such that
$h(\gamma_1)=\gamma'_1$, $h(\gamma_2)=\gamma'_2$ and $tord(\gamma,h(\gamma))=tord(\gamma,T')$ for each arc $\gamma\subset T$.

Let $Q=Q_f(T)$, and let the width function $\mu(q)=\mu_{T,f}(q):Q\to\F\cup\{\infty\}$ be affine, $\mu(q)=aq+b$.
We consider the following cases: (1) $Q=\{\alpha\}$ where $\alpha\le\beta$, (2) $Q=\{\alpha\}$ where $\alpha>\beta$, (3) $\mu(q)\equiv q$, (4)
$\mu(q)<q$ for all $q\in Q$, (5) $\mu(q)=q$ only for the maximal value of $\mu(q)$, (6) $\mu(q)=q$ only for the minimal value $\mu(q)=\beta$.

Case 1. Any bi-Lipschitz homeomorphism  $h:T\to T'$ such that $h(\gamma_1)=\gamma'_1$ and $h(\gamma_2)=\gamma'_2$ satisfies
$tord(\gamma,h(\gamma))=tord(\gamma,T')=\alpha$ for all arcs $\gamma\in V(T)$.

Case 2. It follows from \cite[Proposition 2.20]{GS} (see Proposition \ref{Prop:2.20-GS}) that there is a bi-Lipschitz
homeomorphism $h:T\to T'$ such that $tord(\gamma,h(\gamma))\ge\alpha$ for any arc $\gamma\subset T$.
Since $tord(\gamma,h(\gamma))\le tord(\gamma,T')=\alpha$ for any arc $\gamma\subset T$, we have $tord(\gamma,h(\gamma))=\alpha$.

Case 3. We may assume that $Q$ is not a point and $q_1=tord(\gamma_1,T')$ is the maximal value of $q\in Q$.
Then $q=ord_\gamma f=\mu_T(\gamma,f)=tord(\gamma,\gamma_1)$ for all arcs $\gamma\subset T$ such that $tord(\gamma,\gamma_1)\le q_1$, otherwise
$ord_\gamma f=q_1$.
Any bi-Lipschitz homeomorphism  $h:T\to T'$ such that $h(\gamma_1)=\gamma'_1$ satisfies
$tord(h(\gamma),\gamma'_1)=tord(\gamma,\gamma_1)$ for all arcs $\gamma\subset T$.
Thus $q=ord_\gamma f=\mu_T(\gamma,f)=tord(\gamma,\gamma_1)=tord(h(\gamma),\gamma'_1)$ for all $\gamma\subset T$ such that $tord(\gamma,\gamma_1)\le q_1$.
Since $tord(\gamma_1,\gamma'_1)=q_1\ge q$, this implies that $tord(\gamma,h(\gamma))\ge q$.
If $tord(\gamma,h(\gamma))>q$ then $tord(\gamma,T')>q$, a contradiction. Thus  $tord(\gamma,h(\gamma))=q$ for all $\gamma\subset T$ such that $tord(\gamma,\gamma_1)\le q_1$. Otherwise, if $tord(\gamma,\gamma_1)>q_1$, then $tord(h(\gamma),\gamma'_1)>q_1$, thus
$tord(\gamma,h(\gamma))=tord(\gamma_1,\gamma'_1)=q_1=tord(\gamma,T')$.

Case 4. Using the same arguments as in the proof of \cite[Proposition 2.20]{GS},
we assume that $T'=T_\beta\subset \R^2$ is a standard $\beta$-H\"older triangle (\ref{Formula:Standard Holder triangle}),
$T\cup T'\subset\R^n$, and $\pi:T\to\R^2$ is an orthogonal projection.
We may also assume that $Q$ is not a point, and that $\mu(q_1)$, where $q_1=tord(\gamma_1,T')$, is the maximal value of $\mu(q)$ for $q\in Q$.
Then $\mu_T(\gamma,f)=tord(\gamma,\gamma_1)$ for all arcs $\gamma\subset T$ such that $tord(\gamma,\gamma_1)\le \mu(q_1)$, otherwise
$ord_\gamma f=q_1$.

The set $S\subset T$ where $\pi$ is not smooth and orientation-preserving is a finite union of isolated arcs and $\beta_j$-H\"older
triangles $T_j=T(\lambda_j,\lambda'_j)\subset T$.
We want to show that $f$ has the same order $q_j$ on each arc $\gamma\subset T_j$.
It is enough to show that $ord_{\lambda_j} f=ord_{\lambda'_j} f$.
We may assume that $\lambda'_j\subset T(\gamma_1,\lambda_j)\subset T$, thus $\mu_j=\mu_T(\lambda_j,f)\le\mu(\lambda'_j,f)$.
If $ord_{\lambda_j} f=q_j\ne ord_{\lambda'_j} f$ then $\beta_j\le\mu_j=tord(\lambda_j,\gamma_1)$.
Since $T_j$ is orientation-reversing and $T$ is normally embedded, we have $\beta_j\ge q_j$, a contradiction with the condition $\mu_j<q_j$.
Thus $ord_{\lambda_j} f=ord_{\lambda'_j} f=q_j\le\beta_j<\mu_j$,
and there is a $\mu_j$-H\"older triangle $\tilde T_j\subset T$ containing $T_j$ such that
$ord_\gamma f=q_j$ for each arc $\gamma\subset \tilde T_j$.

It follows from \cite[Proposition 2.20]{GS} that there is a bi-Lipschitz orientation-preserving homeomorphism $h_j:\tilde T_j\to \pi(\tilde T_j)\cap T'$
such that $tord(\gamma,h_j(\gamma))=q_j$ for each arc $\gamma\subset\tilde T_j$.
One can choose triangles $\tilde T_j$ so that they are all disjoint.
Replacing projection $\pi$ with the homeomorphisms $h_j$ on each triangle $\tilde T_j$, a bi-Lipschitz homeomorphism $h:T\to T'$ can be obtained,
such that $tord(\gamma,h(\gamma))=tord(\gamma,T')$ for each arc $\gamma\subset T$.

Case 5. Assuming that $\mu(q_1)=q_1=tord(\gamma_1,\gamma'_1)$ is the maximal value of $\mu(q)$,
for any arc $\gamma\subset T$ such that $tord(\gamma,T')=q_1$ we have $tord(\gamma,\gamma_1)\ge\mu(\gamma)=q_1$, thus $tord(\gamma,h(\gamma))=tord(\gamma_1,\gamma'_1)=q_1$.
For any arc $\gamma\subset T$ such that $q=tord(\gamma,T')>\mu(q)=tord(\gamma,\gamma_1)$, the same arguments as in Case 4 apply.

Case 6. The same arguments as in Case 4 imply that, for any triangle $T_j\subset T$ containing an arc $\gamma$ such that $ord_\gamma f>\beta$
and $\pi|_T$ is orientation-reversing, the order $q_j$ of $f$ is the same on all arcs of $T_j$, and there is a $\mu_j$-triangle $\tilde T_j$
containing $T_j$, where $\mu_j>\beta$, such that $\pi|_{\tilde T_j}$ can be replaced with a bi-Lipschitz orientation-preserving homeomorphism
$h_j:\tilde T_j\to \pi(\tilde T_j)\cap T'$, such that $tord(\gamma,h_j(\gamma))=q_j$ for each arc $\gamma\subset\tilde T_j$.
This allows one to find $\beta$-H\"older triangles $\tilde T=T(\gamma_1,\tilde\gamma)\subset T$ and $\tilde T'=T(\gamma'_1,\tilde\gamma')\subset T'$ such that $\bar T=T(\tilde\gamma,\gamma_2)\subset T$ and $\bar T'=T(\tilde\gamma',\gamma'_2)\subset T'$ are also $\beta$-H\"older triangles,
$ord_\gamma f=\beta$ for each arc $\gamma\subset\bar T$, and to obtain
a bi-Lipschitz homeomorphism $\tilde h:\tilde T\to \tilde T'$ such that $tord(\gamma,h(\gamma))=tord(\gamma,T')$ for each arc $\gamma\subset \tilde T$. After that, $\tilde h$ combined with any bi-Lipschitz homeomorphism $\bar h:\bar T\to\bar T'$, such that
$\bar h(\tilde\gamma)=\tilde\gamma'$ and $\bar h(\gamma_2)=\gamma'_2$, defines a bi-Lipschitz homeomorphism  $h:T\to T'$ such that $tord(\gamma,h(\gamma))=tord(\gamma,T')$ for each arc $\gamma\subset T$.

The existence of a mapping $h:T\to T'$ satisfying condition 5 of Proposition \ref{map-graph} implies that $Q_f(T)=Q_g(T')$ and
 $\mu_{T,f}\equiv\mu_{T',g}$.
\end{proof}

\begin{definition}\label{def:pizzaslice-oriented}\normalfont
Let $T=T(\gamma_1,\gamma_2)$ and $T'=T(\gamma'_1,\gamma'_2)$ be two normally embedded oriented $\beta$-H\"older triangles satisfying (\ref{tord-tord}), such that $T$ is a pizza slice associated with $f(x)=dist(x,T')$ and $tord(T,T')=tord(\gamma_1,\gamma'_1)>\beta$.
The pair $(T,T')$ is called \emph{positively oriented} if either $T$ is oriented from $\gamma_1$ to $\gamma_2$ and $T'$ from $\gamma'_1$ to $\gamma'_2$,
or $T$ is oriented from $\gamma_2$ to $\gamma_1$ and $T'$ from $\gamma'_2$ to $\gamma'_1$.
Otherwise, the pair $(T,T')$ is called \emph{negatively oriented}.
\end{definition}

\begin{lemma}\label{lem:slices-oriented}
Let $T=T(\gamma_1,\gamma_2)$ and $T'=T(\gamma'_1,\gamma'_2)$ be two normally embedded $\beta$-H\"older triangles in Definition \ref{def:pizzaslice-oriented}
such that $\mu_{T,f}(q)\not\equiv q$. For $q\in Q_T(f)$ such that $\mu(q)<q$, let $Z_q\subset V(T)$ and $Z'_q\subset V(T')$ be the maximal $q$-order zones
for $f(x)=dist(x,T')$ and $g(x')=dist(x',T)$ respectively. Then the pair of zones $(Z_q,Z'_q)$ is positively oriented if the pair of H\"older triangles $(T,T')$ is positively oriented, and negatively oriented otherwise.
\end{lemma}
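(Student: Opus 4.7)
The plan is to reduce this orientation comparison to the orientation behavior of the bi-Lipschitz homeomorphism produced by Lemma \ref{lem:pizzaslice}, and then to let Corollary \ref{long-zone} and Definition \ref{def:zones-oriented} translate that into the orientation of the zone pair. First, I would note that since $T$ is a pizza slice (hence elementary) with respect to $f$ and $tord(T,T')=tord(\gamma_1,\gamma'_1)$, Lemma \ref{lem:elementary} applies and $T'$ is elementary with respect to $g$; moreover, the hypotheses of Lemma \ref{lem:pizzaslice} are satisfied, so there exists a bi-Lipschitz homeomorphism $h:T\to T'$ with $h(\gamma_i)=\gamma'_i$ for $i=1,2$ and $tord(\gamma,h(\gamma))=tord(\gamma,T')$ for every arc $\gamma\subset T$. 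Because $h$ is prescribed on both boundary arcs, it is orientation-preserving with respect to the given orientations of $T$ and $T'$ precisely when $(T,T')$ is positively oriented in the sense of Definition \ref{def:pizzaslice-oriented}, and orientation-reversing otherwise.

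Next, fix $q\in Q_f(T)$ with $\mu(q)<q$. Since $T$ is elementary and a pizza slice, the maximal $q$-order zone $Z_q$ coincides with the width zone $W_T(\gamma,f)$ for any $\gamma\in Z_q$, so by Lemma \ref{width zone is closed} it is a closed $\mu(q)$-zone; I would pick $\lambda_1,\lambda_2\in Z_q$ realizing $tord(\lambda_1,\lambda_2)=\mu(q)$ and set $\theta_i=h(\lambda_i)$. The homeomorphism $h$ preserves tangency orders between arcs (the two triangles are normally embedded, so bi-Lipschitz equals outer bi-Lipschitz), hence $\tilde T=T(\lambda_1,\lambda_2)$ and $\tilde T'=T(\theta_1,\theta_2)$ are both $\mu(q)$-H\"older triangles. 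By construction $V(\tilde T)\subset Z_q$, and $tord(\lambda_i,\theta_i)=tord(\lambda_i,T')=q>\mu(q)$; hence $(\tilde T,\tilde T')$ satisfies the hypotheses of Corollary \ref{long-zone}, which in turn places $V(\tilde T')$ in the maximal $q$-order zone $Z'_q$ for $g$. By Definition \ref{def:zones-oriented}, the orientation of $(Z_q,Z'_q)$ is exactly the orientation of the pair $(\tilde T,\tilde T')$.

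Finally, I would read off the orientation of $(\tilde T,\tilde T')$ directly from Definition \ref{def:triangles-oriented}. Labelling so that $\tilde T$, with its orientation induced from $T$, runs from $\lambda_1$ to $\lambda_2$: if $h$ preserves orientation, then $\tilde T'$, with its orientation induced from $T'$, runs from $\theta_1=h(\lambda_1)$ to $\theta_2=h(\lambda_2)$, so both orientations agree with the pairing $(\lambda_i,\theta_i)$ and $(\tilde T,\tilde T')$ is positively oriented; if $h$ reverses orientation, the induced orientation on $\tilde T'$ runs from $\theta_2$ to $\theta_1$, so exactly one of the two orientations agrees with this pairing, giving a negatively oriented pair. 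Combined with the first paragraph, this shows $(Z_q,Z'_q)$ is positively oriented iff $(T,T')$ is positively oriented.

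The only step that requires genuine care is the first one, where I use Lemma \ref{lem:pizzaslice} to obtain a single global $h$ that simultaneously realizes the tangency orders to $T'$ on every arc of $T$; once such an $h$ is in hand, identifying $\theta_i=h(\lambda_i)$ as members of $Z'_q$ and matching Corollary \ref{long-zone} with Definition \ref{def:triangles-oriented} is a straightforward bookkeeping of orientations, so no additional surprises are expected.
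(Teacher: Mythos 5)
The paper states Lemma \ref{lem:slices-oriented} with no accompanying proof, so there is no printed argument to compare yours against. That said, your proposal is correct and almost certainly the argument the authors had in mind: everything reduces to the single global homeomorphism $h:T\to T'$ furnished by Lemma \ref{lem:pizzaslice}, which realizes regular pairs (condition 5 of Proposition \ref{map-graph}), so that $h$ preserves orientation exactly when $(T,T')$ is positively oriented in the sense of Definition \ref{def:pizzaslice-oriented}, and $h$ transports $Z_q$ to $Z'_q$ together with the requisite witness triangles for Definition \ref{def:zones-oriented}. Two small presentational points. First, you do not really need the detour through Corollary \ref{long-zone} to see $V(\tilde T')\subset Z'_q$: since $(\gamma,h(\gamma))$ is a regular pair for every $\gamma\in V(\tilde T)$ and $h(\tilde T)=\tilde T'$, one has $ord_{h(\gamma)}g=ord_\gamma f=q$ directly; the Corollary by itself only gives the interior arcs of $\tilde T'$, and closing the gap requires invoking elementarity of $T'$, which is more work than the direct route. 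Second, it is worth stating explicitly (even if obvious) that a global orientation-preserving (resp. reversing) bi-Lipschitz homeomorphism of oriented H\"older triangles restricts to an orientation-preserving (resp. reversing) map between sub-H\"older triangles with their induced orientations; this is what lets you transfer the comparison from $(T,T')$ to $(\tilde T,\tilde T')$ in the final paragraph.
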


\begin{lemma}\label{lem:tord}
Let a $\beta$-H\"older triangle $T=T(\gamma_1,\gamma_2)$ and a $\beta'$-H\"older triangle $T'=T(\gamma'_1,\gamma'_2)$ be normally embedded, where
$\beta\ge\beta',\; q_1=tord(\gamma_1,\gamma'_1)=tord(T,T')>\beta$ and $q_2=tord(\gamma_2,T')\ge\beta$.
If $T$ is a pizza slice associated with $f(x)=dist(x,T')$ then there is an arc $\theta\subset T'$ such that
$tord(\gamma'_1,\theta)=\beta$, and
\begin{equation}\label{tord2}
tord(\gamma_2,\theta)=tord(\theta,T)=q_2,
\end{equation}
thus conditions (\ref{tord-tord}) are satisfied for triangles $T$ and $T(\gamma'_1,\theta)\subset T'$.
Moreover, if $q_2>\beta$ then $tord(\gamma'_1,\theta)=\beta$ for any arc $\theta\subset T'$ satisfying condition (\ref{tord2}),
and if $q_2=\beta$ then any arc $\theta\subset T'$ such that $tord(\gamma'_1,\theta)=\beta$ satisfies condition (\ref{tord2}).
\end{lemma}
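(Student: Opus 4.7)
The strategy is to produce $\theta$ as a ``$\beta$-generic'' arc from $\gamma'_1$ in $T'$ and verify (\ref{tord2}) by combining non-Archimedean tangency arithmetic with the pizza slice structure of $T$ with respect to $f$. Since $\beta'\le\beta$ and $\gamma'_1$ is a boundary arc of the $\beta'$-triangle $T'$, the set of arcs $\theta\subset T'$ with $tord(\gamma'_1,\theta)\ge\beta$ is a closed $\beta$-subtriangle $\hat T'=T(\gamma'_1,\theta^*)\subset T'$ (all of $T'$ if $\beta=\beta'$), and arcs on the $\theta^*$-side of $\hat T'$ satisfy $tord(\gamma'_1,\theta)=\beta$. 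For any such $\theta$, the hypothesis $tord(\gamma_1,\gamma'_1)=q_1>\beta$ together with the non-Archimedean property yields $tord(\gamma_1,\theta)=\beta$, and hence $tord(\gamma_2,\theta)\ge\beta$ and $tord(\theta,T)\ge\beta$. If $q_2=\beta$ this already gives $tord(\gamma_2,\theta)=\beta$ (using $\le tord(\gamma_2,T')=q_2$), and $tord(\theta,T)\le\beta$ is proved by contradiction (see below). If $q_2>\beta$, I instead apply Proposition \ref{long-q-zone} to the maximal $q_2$-order zone $Z\subset V(T)$ for $f$ containing $\gamma_2$: its order equals $\mu_{T,f}(q_2)$, which is $<q_2$ in the main sub-case, yielding a corresponding maximal $q_2$-zone $Z'\subset V(T')$ for $g$ with $tord(Z,Z')=q_2$, so any arc $\theta\in Z'$ satisfies $tord(\theta,T)=ord_\theta g=q_2$ and $tord(\gamma_2,\theta)=q_2$.

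\emph{Uniqueness for $q_2>\beta$ and converse for $q_2=\beta$.} Suppose $\theta\subset T'$ satisfies (\ref{tord2}) with $q_2>\beta$. From $tord(\gamma_2,\theta)=q_2>\beta=tord(\gamma_1,\gamma_2)$ the strict non-Archimedean case forces $tord(\gamma_1,\theta)=\beta$, and combined with $tord(\gamma_1,\gamma'_1)=q_1>\beta$ a second application gives $tord(\gamma'_1,\theta)=\beta$. For the converse when $q_2=\beta$, given any $\theta\subset T'$ with $tord(\gamma'_1,\theta)=\beta$ I must verify $tord(\theta,T)\le\beta$: if some $\gamma^*\subset T$ had $tord(\gamma^*,\theta)>\beta$, then $tord(\gamma^*,\gamma_1)=tord(\gamma_1,\theta)=\beta$ by non-Archimedean, while $ord_{\gamma^*}f\ge tord(\gamma^*,\theta)>\beta$. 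Assuming $\mu_{T,f}$ is affine and non-constant, its supporting arc must be $\gamma_1$ (since $\mu_{T,f}(q_2)=\mu_{T,f}(\beta)=\beta$ realises the minimum); Proposition \ref{prop:width function properties}(3) then gives $\mu_{T,f}(ord_{\gamma^*}f)=tord(\gamma_1,\gamma^*)=\beta$, but the non-constant affine $\mu_{T,f}$ equals $\beta$ only at $q=\beta$, contradicting $ord_{\gamma^*}f>\beta$. The degenerate case $\mu_{T,f}\equiv\beta$ is handled by applying Proposition \ref{long-q-zone} to the maximal $(ord_{\gamma^*}f)$-zone containing $\gamma^*$, whose order $\mu=\beta$ is then strictly below $ord_{\gamma^*}f$, and deriving a contradiction with $tord(\gamma'_1,\theta)=\beta$ via the correspondence with zones in $V(T')$.

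\emph{Main obstacle.} The core technical difficulty is the upper bound $tord(\theta,T)\le q_2$, which never follows from tangency arithmetic alone but always requires the pizza slice hypothesis. The argument splits along the sub-cases of Lemma \ref{lem:pizzaslice}: the generic sub-case ($q_2>\beta$ with $\mu_{T,f}(q_2)<q_2$) is handled by Proposition \ref{long-q-zone}; the boundary sub-case $\mu_{T,f}(q_2)=q_2$ (Case 5 of the proof of Lemma \ref{lem:pizzaslice}) and the $q_2=\beta$ sub-case (Case 6, together with its degenerate limit $\mu_{T,f}\equiv\beta$) must each be treated by a dedicated non-Archimedean argument using Proposition \ref{prop:width function properties}(3). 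Ensuring in every sub-case that the arc $\theta$ produced can be chosen to lie exactly in the generic $\beta$-zone from $\gamma'_1$ (so that $tord(\gamma'_1,\theta)=\beta$, not strictly larger), and that the uniqueness and converse statements respect the dichotomy $q_2>\beta$ versus $q_2=\beta$, is the most delicate coordination.
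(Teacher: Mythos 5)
Your plan takes a different route from the paper's, which supposes $\alpha=tord(\theta,T)>q_2$ for contradiction and, producing $\lambda\subset T$ with $tord(\lambda,\theta)=\alpha$, reaches a contradiction through the affine width function $\mu$ of the pizza slice: if the supporting arc is $\gamma_2$, then $tord(\lambda,\gamma_2)=\mu(ord_\lambda f)<q_2$ contradicts the non-Archimedean identity $tord(\lambda,\gamma_2)=\min(q_2,\alpha)=q_2$; if the supporting arc is $\gamma_1$, then $tord(\gamma_1,\lambda)>\beta$ forces $tord(\gamma'_1,\theta)>\beta$, contradicting $tord(\gamma'_1,\theta)=\beta$ obtained from $tord(\gamma_2,\theta)=q_2>\beta$ by non-Archimedean arithmetic. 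The paper never invokes Proposition \ref{long-q-zone} and never needs $\mu_{T,f}(q_2)<q_2$.

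That is exactly where your proposal has a gap that you flag but do not close. For $q_2>\beta$ you build $\theta$ via Proposition \ref{long-q-zone} applied to the maximal $q_2$-order zone $Z$ containing $\gamma_2$, which requires $\mu(Z)=\mu_{T,f}(q_2)<q_2$. The complementary sub-case $\mu_{T,f}(q_2)=q_2$ (permitted by Proposition \ref{prop:width function properties}(1) when $\gamma_2$ is the supporting arc of the pizza slice) is only noted as something that ``must each be treated by a dedicated non-Archimedean argument''; no such argument is given, and in that sub-case your construction of $\theta$ has no starting point. The paper's contradiction argument disposes of the $\gamma_2$-supporting case directly via Proposition \ref{prop:width function properties}(3), regardless of whether $\mu_{T,f}(q_2)<q_2$, so the case split your plan depends on is both avoidable and, as written, unclosed. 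A secondary unresolved point: in your converse argument for $q_2=\beta$, applying Proposition \ref{prop:width function properties}(3) to $\gamma^*$ requires $\mu_T(\gamma^*)<\mu_T(\gamma_1)$, which is not verified when $ord_{\gamma^*}f=q_1$; that residual case is easy (then $tord(\gamma^*,\gamma_1)\ge\mu_{T,f}(q_1)>\beta$, contradicting $tord(\gamma^*,\gamma_1)=\beta$), but it is not covered by what you wrote.
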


\begin{proof} Let $\theta\subset T'$ be an arc such that $tord(\gamma_2,\theta)=q_2$.
Note first that $\alpha=tord(\theta,T)\ge q_2$.
Suppose that $\alpha>q_2$, and let $\lambda\subset T$ be an arc such that $tord(\lambda,\theta)=\alpha$,thus $q'=tord(\lambda,T')\ge\alpha>q_2$.
Let $\mu_T(q)$ be the affine width function of $T$. Note that $\mu$ cannot be constant, since $q_1\ge\alpha >q_2$,
If the maximum of $\mu_T(q)$ is at $q=q_2$ then $tord(\lambda,\gamma_2)=\mu(q')\le\mu(\alpha)<\mu(q_2)\le q_2$.
Since $q_2=tord(\gamma_2,\theta)$ and $\alpha=tord(\lambda,\theta)>q_2$, this contradicts the non-Archimedean property.
Thus the maximum of $\mu_T(q)$ is at $q=q_1$ and its minimum is $\mu(q_2)=\beta$.

If $q_2>\beta$ then $tord(\theta,\gamma'_1)=tord(\gamma_1,\gamma_2)=\beta$ for any arc $\theta\subset T'$ satisfying condition (\ref{tord2}).
However, since $tord(\gamma_1,\lambda)>\beta$, $tord(\gamma_1,\gamma'_1)=\mu(q_1)>\beta$ and $\alpha=tord(\lambda,\theta)>q_2\ge\beta$, condition $tord(\gamma'_1,\theta)=\beta$ cannot be satisfied, a contradiction. Thus $\alpha=q_2$ in this case.

Otherwise, if $q_2=\beta$, then any arc $\theta\subset T'$ such that $\alpha=tord(\theta,T)>\beta$
satisfies $tord(\theta,\gamma'_1)\ge\min(\alpha,q_1)>\beta$, thus any arc $\theta\subset T'$ such that $tord(\gamma'_1,\theta)=\beta$ satisfies condition (\ref{tord2}).
\end{proof}

\begin{proposition}\label{symmetric-pizza}
Let $T=T(\gamma_1,\gamma_2)$ and $T'=T(\gamma'_1,\gamma'_2)$ be normally embedded $\beta$-H\"older triangles with the distance functions $f(x)=dist(x,T')$ and $g(x')=dist(x',T)$, such that $T$ is elementary with respect to $f$ and $tord(\gamma_1,\gamma'_1)=tord(T,T')$.
Let $\lambda\subset T$ and $\lambda'\subset T'$ be a regular pair of arcs such that $\tilde T=T(\gamma_1,\lambda)$ and $\tilde T'=T(\gamma'_1,\lambda')$ are $\tilde\beta$-H\"older triangles and $\check T=T(\lambda,\gamma_2)$ and $\check T'=T(\lambda',\gamma'_2)$ are $\check\beta$-H\"older triangles.
If both pairs $(\tilde T,\tilde T')$ and $(\check T,\check T')$ satisfy conditions of Proposition \ref{map-graph},
then the pair $(T,T')$ satisfies conditions of Proposition \ref{map-graph}.
\end{proposition}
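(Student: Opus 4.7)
The strategy is to verify condition~3 of Proposition~\ref{map-graph} for $(T,T')$ by gluing the bi-Lipschitz homeomorphisms supplied by the same condition applied to the two sub-pairs. The hypotheses give bi-Lipschitz homeomorphisms $\tilde h\colon \tilde T\to\tilde T'$ and $\check h\colon \check T\to\check T'$ with $\tilde h(\gamma_1)=\gamma'_1$, $\tilde h(\lambda)=\check h(\lambda)=\lambda'$, $\check h(\gamma_2)=\gamma'_2$, and $tord(\gamma,\tilde h(\gamma))=tord(\gamma,\tilde T')$ for each $\gamma\subset\tilde T$ (symmetrically for $\check h$). After pre-composing $\check h$ with a bi-Lipschitz self-homeomorphism of $\check T$ that fixes $\gamma_2$ and re-parametrises $\lambda$ so that $\tilde h|_\lambda=\check h|_\lambda$, the two pieces glue into a bi-Lipschitz homeomorphism $h\colon T\to T'$ with $h(\gamma_1)=\gamma'_1$ and $h(\gamma_2)=\gamma'_2$.

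The verification of condition~3 for $(T,T')$ then reduces to a purely tangency-theoretic inequality. For $\gamma\subset\tilde T$ one has $tord(\gamma,h(\gamma))=tord(\gamma,\tilde T')$, while the identity $dist(x,T')=\min(dist(x,\tilde T'),dist(x,\check T'))$ for $x\in T$ yields, after comparing orders,
\[
 tord(\gamma,T')=\max\bigl(tord(\gamma,\tilde T'),\,tord(\gamma,\check T')\bigr).
\]
Thus the required equality $tord(\gamma,h(\gamma))=tord(\gamma,T')$ is equivalent to the \emph{key claim} that $tord(\gamma,\tilde T')\ge tord(\gamma,\check T')$ for every arc $\gamma\subset\tilde T$, and analogously $tord(\gamma,\check T')\ge tord(\gamma,\tilde T')$ for every arc $\gamma\subset\check T$.

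To prove the key claim, fix $\gamma\subset\tilde T$ and an arbitrary $\eta\subset\check T'$, and set $q=tord(\lambda,\lambda')$. By regularity of $(\lambda,\lambda')$ one has $tord(\lambda,T')=q$, so in particular $tord(\lambda,\eta)\le q$. Applying the non-Archimedean property of $tord$ to the triples $\{\gamma,\lambda,\lambda'\}$ and $\{\gamma,\lambda,\eta\}$ and doing a case analysis on the ordering of $tord(\gamma,\lambda)$ with $q$, and of $tord(\gamma,\lambda)$ with $tord(\lambda,\eta)$, handles every case \emph{except} the degenerate one in which $tord(\gamma,\lambda)=tord(\lambda,\eta)$: in each non-degenerate case one gets $tord(\gamma,\eta)\le tord(\gamma,\lambda')\le tord(\gamma,\tilde T')$ directly.

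The main obstacle is the degenerate case. There the non-Archimedean property only yields the lower bound $tord(\gamma,\eta)\ge tord(\gamma,\lambda')$, so a priori $\check T'$ could achieve a tangency with $\gamma$ strictly larger than anything in $\tilde T'$. I plan to rule this out using the full strength of the hypotheses: by Corollary~\ref{elementary-elementary}, $\tilde T$ is elementary with respect to $\tilde f(x)=dist(x,\tilde T')$, and applying the zone correspondence of Proposition~\ref{long-q-zone} and Corollary~\ref{long-zone} to the pair $(T,T')$ forces the existence of an arc $\xi\subset\tilde T'$ with $tord(\gamma,\xi)=tord(\gamma,\eta)$. This witness arc gives $tord(\gamma,\tilde T')\ge tord(\gamma,\eta)$, closing the last case and completing the verification of condition~3 of Proposition~\ref{map-graph} for $(T,T')$.
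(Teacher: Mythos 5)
Your gluing strategy and the reduction to the tangency claim---that $tord(\gamma,T')=tord(\gamma,\tilde T')$ for $\gamma\subset\tilde T$ (and symmetrically for $\check T$)---are exactly what the paper does, and your non-Archimedean case analysis correctly handles the easy subcases. The problem is the "degenerate" case, which is where the entire difficulty lives, and your treatment of it is a gap rather than a proof.

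Concretely: in the case $tord(\gamma,\lambda)=tord(\lambda,\eta)$ with $tord(\gamma,\eta)$ possibly large, you invoke Proposition \ref{long-q-zone} and Corollary \ref{long-zone} to assert that there must be an arc $\xi\subset\tilde T'$ with $tord(\gamma,\xi)=tord(\gamma,\eta)$. But those statements give a zone correspondence between $V(T)$ and $V(T')$; nothing in them localizes the corresponding zone $Z'$ to lie in $V(\tilde T')$ rather than $V(\check T')$, and indeed $\eta$ itself already witnesses an arc of $\check T'$ with the right tangency order, so the correspondence by itself is consistent with $Z'$ sitting entirely inside $\check T'$. You would need an additional argument showing why the high-contact arcs cannot all land on the $\check T'$ side, and that is precisely what is missing. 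The paper closes this case by a different mechanism: it uses Lemma \ref{lem:elementary} to get that $T'$ is elementary with respect to $g$, hence the order function $\theta\mapsto ord_\theta g$ is monotone along $T'$ from its maximum at $\gamma_1'$; combined with condition 5 of Proposition \ref{map-graph} for $\tilde h$ (giving $tord(\gamma,\tilde T')=tord(\tilde h(\gamma),\tilde T)\ge tord(\lambda',\tilde T)=tord(\lambda,\lambda')$), a hypothetical arc $\gamma'\subset\check T'$ with $tord(\gamma',T)\ge\alpha>tord(\lambda',T)$ would make the order of $g$ dip below $\alpha$ at $\lambda'$ and rise again at $\gamma'$, contradicting monotonicity. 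The argument for $\gamma\subset\check T$ is an analogous, but not identical, use of elementarity of $T$ and $\tilde T$ together with the regular-pair property of $\tilde h^{-1}$. So while your skeleton is right, the key step needs the elementarity-monotonicity argument, not the zone correspondence; as written the proposal does not establish the statement.
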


\begin{proof}
Since conditions 1 - 5 of Proposition \ref{map-graph} are equivalent, it is enough to prove condition 3, i.e., to find a bi-Lipschitz homeomorphism
$h:T\to T'$ such that $h(\gamma_1)=\gamma'_1$, $h(\gamma_2)=\gamma'_2$ and $tord(\gamma,h(\gamma))=tord(\gamma,T')$ for each arc $\gamma\subset T$.
Conditions of Proposition \ref{symmetric-pizza} imply that there is a bi-Lipschitz homeomorphism
$\tilde h:\tilde T\to \tilde T'$ such that $\tilde h(\gamma_1)=\gamma'_1$, $\tilde h(\lambda)=\lambda'$ and $tord(\gamma,\tilde h(\gamma))=tord(\gamma,\tilde T')=tord(\tilde h(\gamma,\tilde T)$ for each arc $\gamma\subset\tilde T$, and also a bi-Lipschitz homeomorphism
$\check h:\check T\to \check T'$ such that $\check h(\lambda)=\lambda'$, $\check h(\gamma_2)=\gamma'_2$ and $tord(\gamma,\check h(\gamma))=tord(\gamma,\check T')=tord(\check h(\gamma),\check T)$ for each arc $\gamma\subset \check T$.
We may assume that $\tilde h(x)=\check h(x)$ for $x\in\lambda$.

We claim that a bi-Lipschitz homeomorphism $h:T\to T'$ with the necessary properties can be defined as $h(x)=\tilde h(x)$ for $x\in\tilde T$
and $h(x)=\check h(x)$ for $x\in\check T$. It is enough to show that $tord(\gamma,T')=tord(\gamma,\tilde T')$ for any arc $\gamma\subset\tilde T$
and $tord(\gamma,T')=tord(\gamma,\check T')$ for any arc $\gamma\subset\check T$.

Since $T$ is elementary with respect to $f$, Lemma \ref{lem:elementary} implies that $T'$ is elementary with respect to $g$.
Corollary \ref{elementary-elementary} implies that $\tilde T$ is elementary with respect to $\tilde f$,
$\tilde T'$ is elementary with respect to $\tilde g$,  $\check T$ is elementary with respect to $\check f$
and $\check T'$ is elementary with respect to $\check g$.

Let $\gamma\subset \tilde T$. Then $\alpha=tord(\gamma,T')=max(tord(\gamma,\tilde T'),tord(\gamma,\check T')\ge tord(\gamma,\tilde T')$.
If $\alpha >tord(\gamma,\tilde T')$ then there is an arc $\gamma'\subset\check T'$ such that $tord(\gamma,\gamma')=tord(\gamma,\check T')=\alpha$,
thus $tord(\gamma',T)\ge\alpha$.
Then
$$\alpha>tord(\gamma,\tilde h(\gamma))=tord(\tilde h(\gamma),\tilde T)\ge tord(\lambda',\tilde T)=tord(\lambda,\lambda')=tord(\lambda',\check T)$$
Thus $tord(\gamma',T)>tord(\lambda',T)$, a contradiction with $T'$ being elementary with respect to $g$.

Let $\gamma\subset \check T$. Then $\alpha=tord(\gamma,T')\le tord(\lambda,T')=tord(\lambda,\lambda')$.
If $\alpha>tord(\gamma,\check T')$ then there is an arc $\gamma'\subset \tilde T'$ such that
$tord(\gamma,\gamma')=\alpha>tord(\gamma,\check T'\ge tord(\gamma,\lambda')$ (see Fig.~\ref{fig:symmetric-pizza}).
This implies that $tord(\gamma',\lambda')=tord(\gamma,\lambda')$.
Since $tord(\tilde h^{-1}(\gamma'),\gamma)\le tord(\tilde h^{-1}(\gamma'),\lambda)=tord(\gamma',\lambda')$,
we have $tord(\tilde h^{-1}(\gamma'),\gamma')=tord(\tilde h^{-1}(\gamma'),\gamma)<\alpha$.
Since $T$ is elementary with respect to $f$, we have $tord(\tilde h^{-1}(\gamma'),\gamma')\ge tord(\lambda,\lambda')\ge\alpha$,
a contradiction with condition $tord(\tilde h^{-1}(\gamma'),\gamma)<\alpha$.
Thus $tord(\gamma,T')=tord(\gamma,\check T)$.
 \end{proof}

 \begin{figure}
\centering
\includegraphics[width=5in]{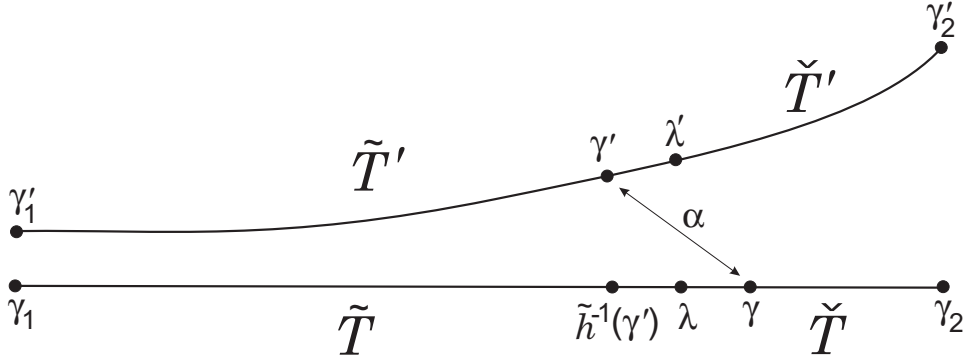}
\caption{Illustration to the proof of Proposition \ref{symmetric-pizza}.}\label{fig:symmetric-pizza}
\end{figure}

\begin{theorem}\label{theorem:very-elementary}
Let $T=T(\gamma_1,\gamma_2)$ and $T'=T(\gamma'_1,\gamma'_2)$ be normally embedded $\beta$-H\"older triangles satisfying (\ref{tord-tord}).
Let  $f(x)=dist(x,T')$ for $x\in T$ and $g(x')=dist(x',T)$ for $x'\in T'$.
If $T$ is elementary with respect to $f$ then $T$ and $T'$ satisfy conditions of Proposition \ref{map-graph}.
Moreover,  $\mu_{T,f}\equiv\mu_{T',g}$, where $\mu_{T,f}(q)$ and $\mu_{T',g}(q)$ are the width functions defined on $Q_f(T)=Q_g(T')$.
\end{theorem}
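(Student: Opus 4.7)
The plan is to reduce to Lemma~\ref{lem:pizzaslice} (which handles single pizza slices) by subdividing the pair $(T,T')$ along a minimal pizza on $T$ associated with $f$, and then to glue the resulting bi-Lipschitz maps via Proposition~\ref{symmetric-pizza}. Lemma~\ref{lem:elementary} together with the boundary condition (\ref{tord-tord}) immediately gives that $T'$ is elementary with respect to $g$, restoring symmetry between the two sides from the outset.

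I would then fix a minimal pizza $\{T_i = T(\lambda_{i-1},\lambda_i)\}_{i=1}^p$ of $T$ associated with $f$ supplied by Proposition~\ref{MP}, with $\lambda_0=\gamma_1$, $\lambda_p=\gamma_2$, and set $q_i = ord_{\lambda_i} f$, letting $D_i$ denote the depth zones. The next step is to construct a matching decomposition $\{T'_i = T(\lambda'_{i-1},\lambda'_i)\}$ of $T'$. Take $\lambda'_0=\gamma'_1$ and $\lambda'_p=\gamma'_2$, for which (\ref{tord-tord}) is part of the hypothesis. For $0<i<p$ with $q_i<\infty$, Corollary~\ref{correspondance} supplies a unique maximal $q_i$-order zone $Z'_i=\tau_{q_i}(D_i)\subset V(T')$ for $g$, and I pick an arc $\lambda'_i \in Z'_i$ with $tord(\lambda_i, \lambda'_i)=q_i$; Lemma~\ref{oriented-zone} guarantees these choices can be made with consistent positive orientation. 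When $q_i=\infty$ (so $\lambda_i$ lies in the zero set of $f$, i.e.\ in $T\cap T'$), the analogous selection is made inside the matching zero set of $g$. By construction each pair $(T_i, T'_i)$ satisfies (\ref{tord-tord}).

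The crucial verification is then that $tord(\gamma,T')=tord(\gamma,T'_i)$ for every arc $\gamma\subset T_i$, so that $f|_{T_i}$ coincides in order with $dist(\cdot, T'_i)$ and $T_i$ becomes a pizza slice with respect to $dist(\cdot, T'_i)$. The inequality $\ge$ is automatic; the reverse is proved by contradiction: if some arc $\gamma\subset T_i$ had strictly higher tangency to an arc $\gamma'\in T'_j$ with $j\ne i$, then applying Lemma~\ref{lem:min-contact} to the segment of $T$ and $T'$ bridging the two regions would produce an arc in $T_i$ whose distance to $T'$ has order strictly below $\min(q_{i-1},q_i)$, incompatible with the affine elementary structure of the pizza slice $T_i$. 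Granting this, Lemma~\ref{lem:pizzaslice} applies to each pair $(T_i, T'_i)$, yielding a bi-Lipschitz $h_i: T_i\to T'_i$ that satisfies condition~3 of Proposition~\ref{map-graph} and for which $\mu_{T_i,f}\equiv\mu_{T'_i,g}$ on $Q_f(T_i)=Q_g(T'_i)$.

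Finally, Proposition~\ref{symmetric-pizza} is applied inductively across the dividing arcs $\lambda_1,\dots,\lambda_{p-1}$ to glue the $h_i$ into a single bi-Lipschitz homeomorphism $h:T\to T'$ realizing condition~3 of Proposition~\ref{map-graph}, and concatenating the per-slice equalities of width functions yields $\mu_{T,f}\equiv\mu_{T',g}$ on $Q_f(T)=Q_g(T')$. The main obstacle I anticipate is the technical verification in the preceding paragraph, namely that on each slice the global distance $f$ is controlled by the single matching piece $T'_i$ and not by distant pieces of $T'$ at higher order; this is where the elementary hypothesis on $T$, Corollary~\ref{long-zone}, and the non-Archimedean property must be combined with care. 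A secondary bookkeeping difficulty is the case where $f$ (and correspondingly $g$) vanishes on some dividing arcs, which is handled by treating the zero sets of $f$ in $T$ and $g$ in $T'$ in parallel via (\ref{tord-tord}) and Lemma~\ref{lem:elementary}.
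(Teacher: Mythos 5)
Your overall strategy---decompose $T$ by a minimal pizza for $f$, construct a matching decomposition of $T'$, apply Lemma~\ref{lem:pizzaslice} to each pair of slices, and glue with Proposition~\ref{symmetric-pizza}---is the same as the paper's, and the opening observation (Lemma~\ref{lem:elementary} plus (\ref{tord-tord}) give that $T'$ is elementary with respect to $g$) is exactly right. The paper's version of this strategy is formulated as an induction on the number $p$ of pizza slices: it peels off $T_1$, finds a matching $T'_1$, applies the inductive hypothesis to the remainder, and glues via Proposition~\ref{symmetric-pizza}. Your proposal tries to produce the whole matching decomposition $\{\lambda'_i\}$ in one pass. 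The inductive version is not just a stylistic preference; it resolves two genuine difficulties your write-up leaves open.

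First, the choice of $\lambda'_i$. You invoke Corollary~\ref{correspondance} to produce $Z'_i=\tau_{q_i}(D_i)$, but that corollary only applies to maximal $q$-order zones $Z$ with $\mu(Z)<q$. Pizza zones at the ends of transversal slices (where $\mu_\ell(q)\equiv q$) fail this condition: the relevant width zone $W_T(\lambda_i,f)$ has order exactly $q_i$, so there is no corresponding zone in $L'_{q_i}$. You would also need to use the width zone rather than the depth zone $D_i$ (these are different objects of possibly different orders). The paper instead applies Lemma~\ref{lem:tord} to the pizza slice $T_1$, which produces the splitting arc $\theta_1\subset T'$ with $tord(\gamma'_1,\theta_1)=\beta_1$ and establishes (\ref{tord2}) uniformly, including the case $q_1=\beta_1$; this is the mechanism that works for every slice, transversal or not.

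Second, the ``crucial verification'' that $tord(\gamma,T')=tord(\gamma,T'_i)$ for $\gamma\subset T_i$ is asserted via Lemma~\ref{lem:min-contact} and a contradiction, but that sketch does not set up the hypotheses of the lemma and does not explain where the needed arcs $\theta_1,\theta_2,\theta$ come from. The paper's argument for the analogous statement ($tord(\gamma,\check T')=tord(\gamma,T')$ for $\gamma\subset\check T$) is the one inside Proposition~\ref{symmetric-pizza}, and it relies crucially on already having the bi-Lipschitz map $\tilde h$ on the first slice: the contradiction is obtained by transporting a hypothetical cross-contact arc through $\tilde h^{-1}$ and then using elementarity of $T$ (resp.\ of $T'$). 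In your one-pass version the maps $h_i$ are constructed \emph{after} the verification, so the logical order is off; you would have to reorganize the argument so that the verification for slice $i$ only uses the maps on slices already processed, which is exactly what the paper's induction on $p$ achieves. Fixing these two points essentially recovers the paper's proof.
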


\begin{proof}
Since the triangles are elementary, we may assume that $tord(\gamma_1,\gamma'_1)=tord(T,T')$.
Let $\{T_i\}_{i=1}^p$ be a minimal pizza decomposition of $T$ associated with $f$, where each pizza slice
$T_i=T(\lambda_{i-1},\lambda_i)$ is a $\beta_i$-H\"older triangle, $\lambda_0=\gamma_1$ and $\lambda_p=\gamma_2$.
We proceed by induction on the number $p$ of pizza slices. The case $p=1$ follows from Lemma \ref{lem:pizzaslice}.
If $p>1$ then $tord(\lambda_1,T')>\beta$, otherwise $\{T_i\}$ would not be a minimal pizza decomposition.
It follows from Lemma \ref{lem:tord} applied to $T_1$ that there is an arc $\theta_1\subset T'$ such that
$tord(\gamma'_1,\theta)=\beta_1$ and conditions of Proposition \ref{map-graph} are satisfied for $T_1$ and $T'_1=T(\gamma'_1,\theta_1)$.

Let $\check T=T(\lambda_1,\gamma_2)$ and $\check T'=T(\theta_1,\gamma'_2)$. Since $tord(\lambda_1,T')>\beta$, $\check T$ and $\check T'$
have the same exponents (see Remark \ref{rem:beta}).
The same arguments as in the proof of Proposition \ref{symmetric-pizza} show that $tord(\gamma,\check T')=tord(\gamma,T')$ for any arc $\gamma\subset \check T$, thus $\{T_i\}_{i=2}^p$ is a minimal pizza decomposition of $\check T$ associated with the function $\check f(x)=dist(x,\check T')$.
By the inductional hypothesis, H\"older triangles $\check T$ and $\check T'$ satisfy conditions of Proposition \ref{map-graph}.
Proposition \ref{symmetric-pizza} implies that $T=T_1\cup \check T$ and $T'=\T'_1\cup\check T'$ satisfy conditions of Proposition \ref{map-graph}.
The existence of a mapping $h:T\to T'$ satisfying condition 5 of Proposition \ref{map-graph} implies that $Q_f(T)=Q_g(T')$ and
 $\mu_{T,f}\equiv\mu_{T',g}$.
\end{proof}

\section{The $\sigma\tau$-pizza invariant.}\label{the-invariant}

If two normally embedded H\"older triangles $T=T(\gamma_1,\gamma_2)$ and $T'=T(\gamma'_1,\gamma'_2)$ satisfying condition (\ref{tord-tord}) are not elementary with respect to the distance functions, then $T\cup T'$ may be not outer bi-Lipschitz equivalent to the union of $T$ and a graph of a function defined on $T$ (see Fig.~\ref{fig:non-elementary}).
In any case, a minimal pizza on $T$ associated with the function $f(x)=dist(x,T')$,
and a minimal pizza on $T'$ associated with the function $g(x')=dist(x',T)$, are outer Lipschitz invariants of the pair $(T,T')$.
The following example shows that two pairs $(T,T')$ and $(\tilde T,\tilde T')$ of normally embedded triangles satisfying condition (\ref{tord-tord})
may be not outer bi-Lipschitz equivalent even when the minimal pizzas on $T$ and $T'$ are equivalent to the minimal pizzas on $\tilde T$ and $\tilde T'$ respectively.

\begin{example}\label{ladder}\normalfont
The links of two normally embedded H\"older triangles $T=T(\gamma_1,\gamma_2)$ and $T'=T(\gamma'_1,\gamma'_2)$ are shown in Fig.~\ref{fig:one-to-one}.
Triangle $T$ is partitioned by the arcs $\lambda_1$, $\lambda_2$, $\lambda_3$, $\lambda_4$ into H\"older triangles $T_1=T(\gamma_1,\lambda_1)$, $T_2=T(\lambda_1,\lambda_2)$, $T_3=T(\lambda_2,\lambda_3)$, $T_4=T(\lambda_3,\lambda_4)$, $T_5=T(\lambda_4,\gamma_2)$ with exponents $\mu_2$, $q_2$, $\mu_1$, $q_2$, $\mu_2$, respectively,
and triangle $T'$ is partitioned by the arcs $\lambda'_1$, $\lambda'_2$, $\lambda'_3$, $\lambda'_4$ into H\"older triangles $T'_1=T(\gamma'_1,\lambda'_1)$, $T'_2=T(\lambda'_1,\lambda'_2)$, $T'_3=T(\lambda'_2,\lambda'_3)$, $T'_4=T(\lambda'_3,\lambda'_4)$, $T'_5=T(\lambda'_4,\gamma'_2)$ with exponents $\mu_2$, $q_2$, $\mu_1$, $q_2$, $\mu_2$, respectively, so that the following holds:\newline
$tord(\gamma,T')=q_2$ for any arc $\gamma\subset T_1$, $tord(\gamma,T')=q_1$ for any arc $\gamma\subset T_3$, $tord(\gamma,T')=q_2$ for any arc $\gamma\subset T_5$;
$tord(\gamma,T')=tord(\gamma,\lambda'_2)$ for any arc $\gamma\subset T_2$, $tord(\gamma,T')=tord(\gamma,\lambda'_3)$ for any arc $\gamma\subset T_4$;
$tord(\gamma',T)=q_2$ for any arc $\gamma'\subset T'_1$, $tord(\gamma',T)=q_1$ for any arc $\gamma'\subset T'_3$, $tord(\gamma',T)=q_2$ for any arc $\gamma'\subset T'_5$;
$tord(\gamma',T)=tord(\gamma',\lambda_2)$ for any arc $\gamma'\subset T'_2$, $tord(\gamma',T)=tord(\gamma',\lambda_3)$ for any arc $\gamma'\subset T'_4$.

In particular, $T$ and $T'$ satisfy condition (\ref{tord-tord}):
$$tord(\gamma_1,T')=tord(\gamma_1,\gamma'_1)=tord(\gamma'_1,T)=tord(\gamma_2,T')=tord(\gamma_2,\gamma'_2)=tord(\gamma'_2,T)=q_2.$$
Assuming $q_1>\mu_1\ge q_2>\mu_2$, the arcs $\lambda_1,\ldots,\lambda_4$ define a minimal pizza decomposition of $T$ associated with the function $f(x)=dist(x,T')$.

Although $T$ is not elementary with respect to $f(x)$, the union $T\cup T'$
is outer bi-Lipschitz equivalent to the union of $T$ and the graph of $f(x)$.
In particular, a minimal pizza on $T'$ associated with the function $g(x')=dist(x',T)$ is equivalent to a minimal pizza on $T$ associated
with the function $f(x)$.

H\"older triangles $\tilde T$ and $\tilde T'$ in Fig.~\ref{fig:one-to-two} are also normally embedded and satisfy condition (\ref{tord-tord}):
$$tord(\gamma_1,\tilde T')=tord(\gamma_1,\gamma'_1)=tord(\gamma'_1,\tilde T)=tord(\gamma_2,\tilde T')=tord(\gamma_2,\gamma'_2)=tord(\gamma'_2,\tilde T)=q_2.$$
Triangle $\tilde T$ is partitioned by the arcs $\lambda_1$, $\lambda_2$, $\lambda_3$, $\lambda_4$ into H\"older triangles $\tilde T_1=T(\gamma_1,\lambda_1)$, $\tilde T_2=T(\lambda_1,\lambda_2)$, $\tilde T_3=T(\lambda_2,\lambda_3)$, $\tilde T_4=T(\lambda_3,\lambda_4)$, $\tilde T_5=T(\lambda_4,\gamma_2)$,
and triangle $\tilde T'$ is partitioned by the arcs $\lambda'_1$, $\lambda'_2$, $\lambda'_3$, $\lambda'_4$ into H\"older triangles $\tilde T'_1=T(\gamma'_1,\lambda'_1)$, $\tilde T'_2=T(\lambda'_1,\lambda'_2)$, $\tilde T'_3=T(\lambda'_2,\lambda'_3)$, $\tilde T'_4=T(\lambda'_3,\lambda'_4)$, $\tilde T'_5=T(\lambda'_4,\gamma'_2)$.
Conditions satisfied by these triangles are the same as for those in Fig.~\ref{fig:one-to-one}, except
$tord(\gamma,\tilde T')=tord(\gamma,\lambda'_3)$ for any arc $\gamma\subset \tilde T_2$, $tord(\gamma,\tilde T')=tord(\gamma,\lambda'_2)$ for any arc $\gamma\subset \tilde T_4$; $tord(\gamma',\tilde T)=tord(\gamma',\lambda_3)$ for any arc $\gamma'\subset \tilde T'_2$, $tord(\gamma',\tilde T)=tord(\gamma',\lambda_2)$ for any arc $\gamma'\subset \tilde T'_4$.
Assuming $q_1>\mu_1\ge q_2>\mu_2$, the arcs $\lambda_1,\ldots,\lambda_4$ define a minimal pizza decomposition of $\tilde T$ associated with the function $\tilde f(x)=dist(x,\tilde T')$.

One can show that minimal pizzas on $T$ and $\tilde T$ are equivalent,
and a minimal pizza on $\tilde T'$ associated with the function $\tilde g(x')=dist(x',\tilde T)$ is equivalent to a minimal pizza on $\tilde T$ associated with the function $\tilde f(x)$. Thus minimal pizzas on $T'$ and $\tilde T'$ are also equivalent.

However, the union $\tilde T\cup\tilde T'$ is not outer bi-Lipschitz equivalent to the union of $\tilde T$ and the graph of $\tilde f(x)$.
In particular, $\tilde T\cup\tilde T'$ is not outer bi-Lipschitz equivalent to $T\cup T'$.
\end{example}

\begin{figure}
\centering
\includegraphics[width=5in]{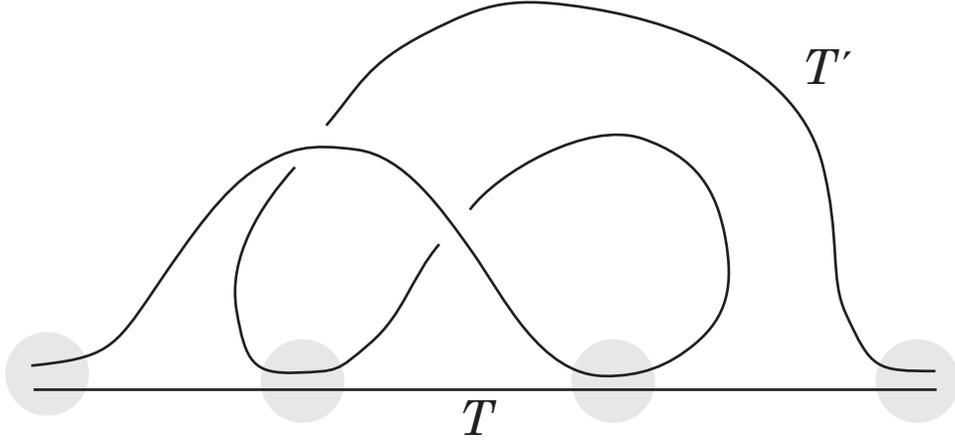}
\caption{Two normally embedded $\beta$-H\"older triangles, not elementary with respect to the distance functions.
Shaded disks indicate zones with the tangency order higher than $\beta$.}\label{fig:non-elementary}
\end{figure}

\begin{figure}
\centering
\includegraphics[width=5in]{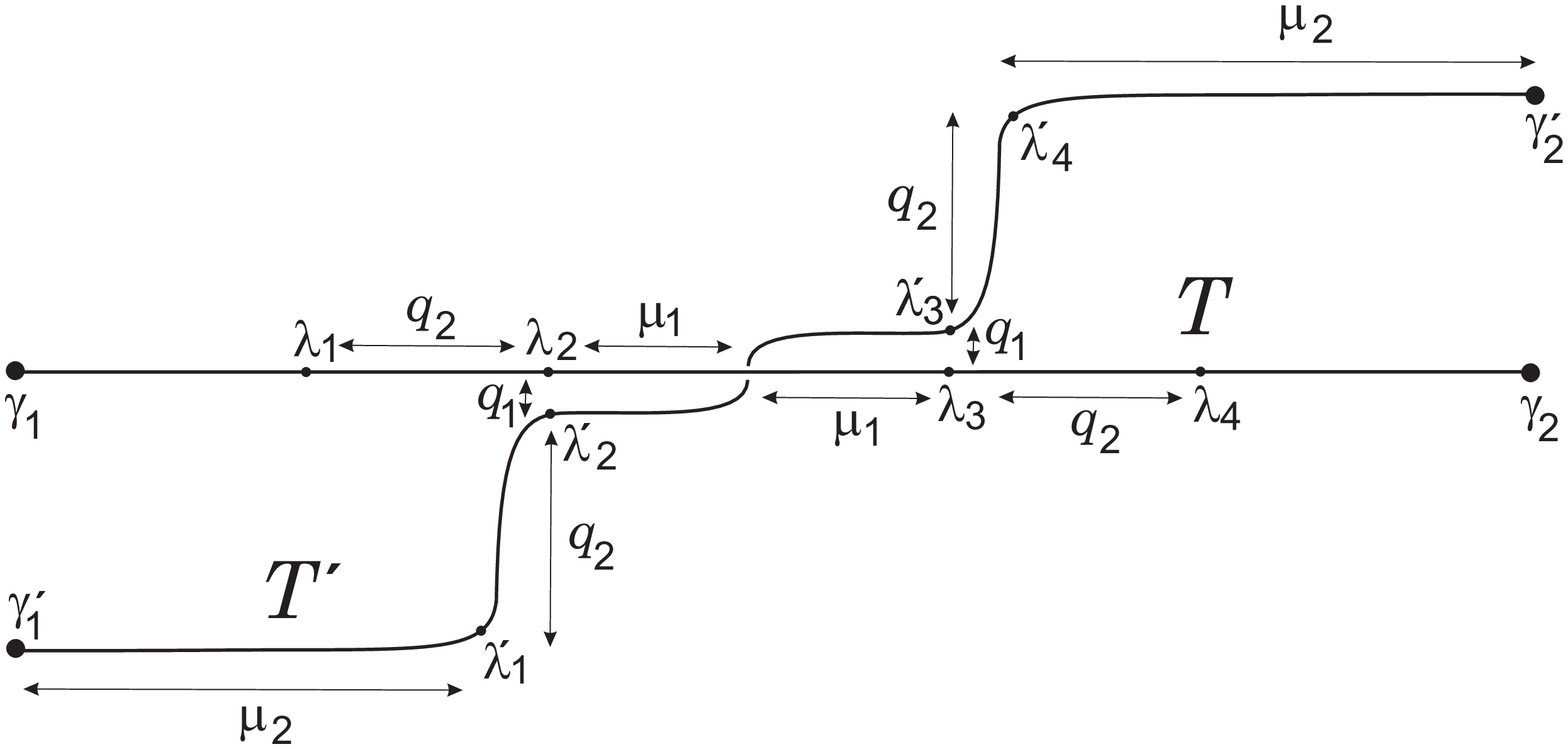}
\caption{Two normally embedded H\"older triangles $T$ and $T'$ in Example \ref{ladder}.}\label{fig:one-to-one}
\end{figure}

\bigskip
\begin{figure}
\centering
\includegraphics[width=5in]{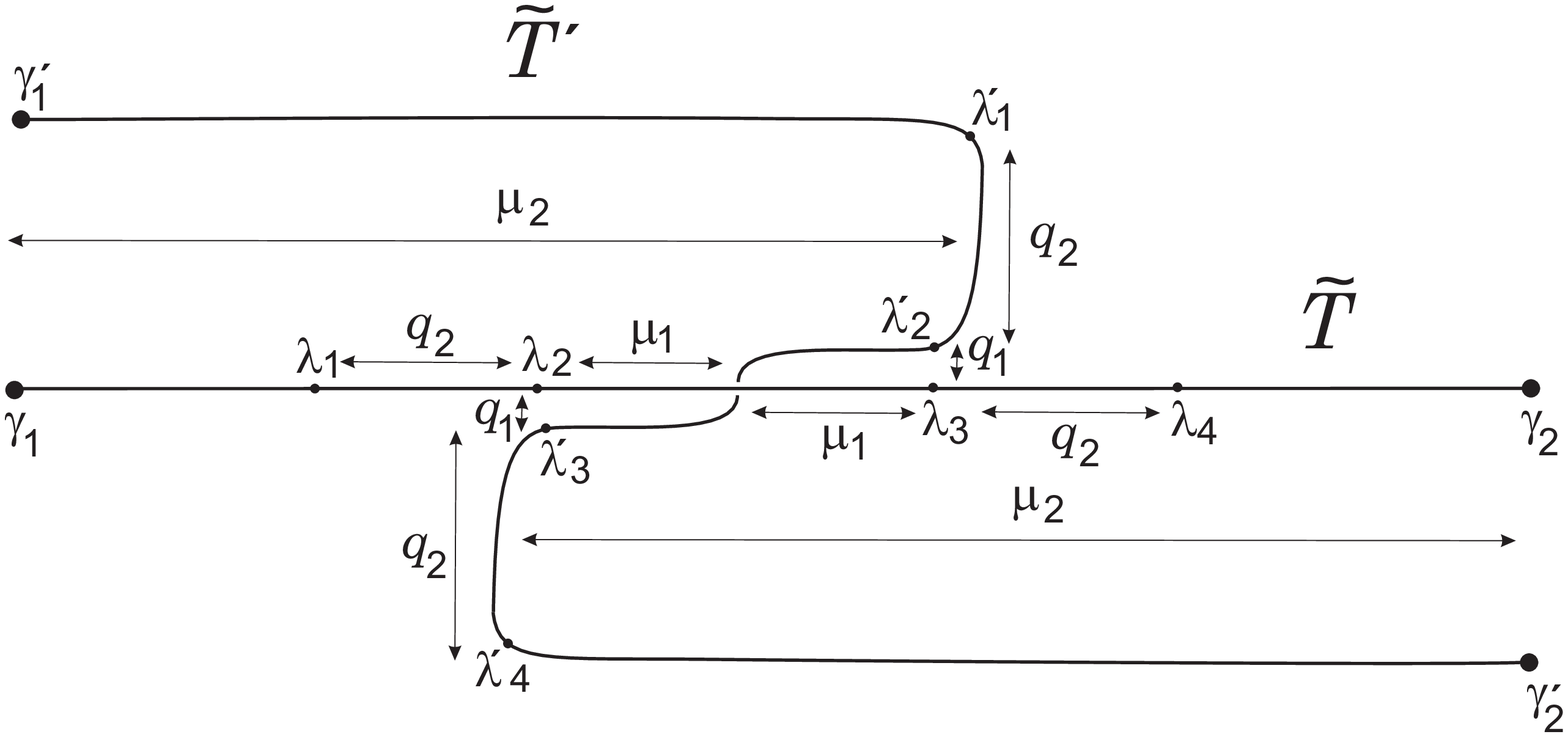}
\caption{Two normally embedded H\"older triangles $\tilde T$ and $\tilde T'$ in Example \ref{ladder}.}\label{fig:one-to-two}
\end{figure}

\begin{definition}\label{maxmin}\normalfont
Let $T=T(\gamma_1,\gamma_2)$ and $T'=T(\gamma'_1,\gamma'_2)$ be normally embedded $\beta$-H\"older triangles,
oriented from $\gamma_1$ to $\gamma_2$ and from $\gamma'_1$ to $\gamma'_2$ respectively, satisfying condition (\ref{tord-tord}).
Let $f(x)=dist(x,T')$ and $g(x')=dist(x',T)$ be the distance functions defined on $T$ and $T'$ respectively.
Let $D_\ell\subset V(T)$, for $\ell=0,\ldots,p$, be the pizza zones of a minimal pizza on $T$ associated with $f(x)$,
ordered according to the orientation of $T$, and let $q_\ell=tord(D_\ell,T')=ord_{\gamma}f$ for any arc $\gamma\in D_\ell$.
A zone $D_\ell$ is called a \emph{maximal exponent zone} for $f(x)$ (or simply a \emph{maximum zone}) if either $0<\ell<p$ and $q_\ell\ge\max(q_{\ell-1},q_{\ell+1})$, or $\ell=0$ and $\beta<q_0\ge q_1$, or $\ell=p$ and $\beta<q_p\ge q_{p-1}$.
If a zone $D_\ell$ is not a maximum zone, it is called a \emph{minimal exponent zone} for $f(x)$ (or simply a \emph{minimum zone})
if either $0<\ell<p$ and $q_\ell\le\min(q_{\ell-1},q_{\ell+1})$, or $\ell=0$ and $q_0\le q_1$, or $\ell=p$ and $q_p\le q_{p-1}$.
Maximum and minimum pizza zones $D'_{\ell'}\subset V(T')$ for a minimal pizza on $T'$ associated with $g(x')$
 are defined similarly, exchanging $T$ and $T'$.
\end{definition}

\begin{remark}\label{altmaxmin}\normalfont
Each of the singular pizza zones $D_0=\{\gamma_1\}$ and $D_p=\{\gamma_2\}$ is either a maximum or a minimum zone.
When $p=1$ and $q_0=q_1>\beta$, both $D_0$ and $D_1$ are maximum zones. When $p=1$ and $q_0=q_1\le\beta$, both $D_0$ and $D_1$ are minimum zones.
If $p>1$ and $0<\ell<p$, then $q_\ell>\min(q_{\ell-1},q_{\ell+1})$ if $D_\ell$ is a maximum zone, $q_\ell<\max(q_{\ell-1},q_{\ell+1})$ if $D_\ell$ is a minimum zone.
\end{remark}

\begin{proposition}\label{sigma}
Let $T=T(\gamma_1,\gamma_2)$ and $T'=T(\gamma'_1,\gamma'_2)$ be two normally embedded H\"older triangles,
oriented from $\gamma_1$ to $\gamma_2$ and from $\gamma'_1$ to $\gamma'_2$ respectively, satisfying condition (\ref{tord-tord}).
Let $\{M_i\}_{i=1}^m$ and $\{M'_{i'}\}_{i'=1}^{m'}$ be the maximum zones in $V(T)$ and $V(T')$
for the distance functions $f(x)=dist(x,T')$ and $g(x')=dist(x',T)$ respectively, ordered according to the orientations of $T$ and $T'$.
Let $\bar q_i=tord(M_i,T')$ and $\bar q'_{i'}=tord(M'_{i'},T)$.
Then $m'=m$, and there is a canonical one-to-one correspondence $i'=\sigma(i)$ between the zones $M_i$ and $M'_{i'}$,
such that $\mu(M'_{i'})=\mu(M_i)$ and $tord(M_i,M'_{i'})=\bar q_i=\bar q'_{i'}$.
If $\{\gamma_1\}=M_1$ is a maximum zone then $M'_1=\{\gamma'_1\}$ and $\sigma(1)=1$. If $\{\gamma_2\}=M_m$ is a maximum zone then
$M'_m=\{\gamma'_2\}$ and $\sigma(m)=m$.
\end{proposition}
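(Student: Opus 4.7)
The strategy is to realize each maximum zone as a canonical outer Lipschitz invariant of the pair $(T,T')$, pair maximum zones on opposite triangles via Corollary \ref{correspondance} (with the boundary condition (\ref{tord-tord}) handling singular boundary max zones), and finally check that the bijection preserves the linear order using Lemma \ref{oriented-zone}.

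First I would classify the maximum zones into two types: \emph{singular boundary maxima}, that is $M_i\in\{\{\gamma_1\},\{\gamma_2\}\}$ with $\bar q_i>\beta$, where $\mu(M_i)=\infty$; and \emph{non-singular maxima} $M_i=D_\ell$ (from the minimal pizza on $T$) with $\mu(M_i)<\bar q_i$. For the first type, condition (\ref{tord-tord}) forces $tord(\gamma_j,\gamma'_j)=tord(\gamma'_j,T)=\bar q_i>\beta$, so $\{\gamma'_j\}$ is itself a singular boundary pizza zone for $g$ and a maximum zone, providing the canonical partner. In the degenerate case $p=1$ with $\bar q_0=\bar q_1>\beta$, both boundary arcs of $T$ and of $T'$ are max zones paired in the obvious way.

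For the second type, Corollary \ref{correspondance} applied with $q=\bar q_i$ produces the canonically corresponding zone $M'_{\sigma(i)}:=\tau_{\bar q_i}(M_i)\subset V(T')$, a maximal $\bar q_i$-order zone for $g$ of order $\mu(M'_{\sigma(i)})=\mu(M_i)<\bar q_i$ with $tord(M_i,M'_{\sigma(i)})=\bar q_i$. To see that $M'_{\sigma(i)}$ is a maximum (rather than minimum) zone in the pizza on $T'$, I would use Proposition \ref{long-q-zone} to identify $M'_{\sigma(i)}$ with a depth zone $D'_{\ell'}$ for $g$, and observe that any arc $\theta'\subset T'$ immediately adjacent to $M'_{\sigma(i)}$ must have $tord(\theta',T)<\bar q_i$ (otherwise $\theta'$ would be forced into $M'_{\sigma(i)}$ by maximality of this $\bar q_i$-order zone), so that the pizza value $\bar q'_{\ell'}=\bar q_i$ strictly dominates the neighboring values. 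For the ordering $i'=\sigma(i)$, condition (\ref{tord-tord}) together with the chosen orientations pins the boundary correspondence $\gamma_1\leftrightarrow\gamma'_1$, $\gamma_2\leftrightarrow\gamma'_2$; Lemma \ref{oriented-zone} then propagates positive orientation to every pair $(M_i,M'_{\sigma(i)})$ of the second type, so that as we traverse $T$ from $\gamma_1$ to $\gamma_2$ the partners traverse $T'$ from $\gamma'_1$ to $\gamma'_2$ in the matching order.

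The main obstacle I anticipate is twofold. First, one must verify that an interior non-singular maximum pizza zone always satisfies $\mu(M_i)<\bar q_i$: if a degenerate pizza slice produced $\mu(M_i)=\bar q_i$ (as in principle permitted by the case $\mu_T(q)\equiv q$ of Lemma \ref{lem:pizzaslice}), Corollary \ref{correspondance} would not apply directly, and one must appeal to the minimality of the pizza to rule out such a zone being a strict local max, or else extend the correspondence using Proposition \ref{Prop:2.20-GS} directly. Second, surjectivity of $\sigma$ is obtained by symmetry: running the same construction with the roles of $T$ and $T'$ interchanged yields an inverse correspondence, and the uniqueness built into Corollary \ref{correspondance} guarantees that the two constructions are mutually inverse, proving $m'=m$. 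The boundary clauses of the proposition follow immediately from the type (i) analysis combined with (\ref{tord-tord}).
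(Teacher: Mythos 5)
Your route via Corollary \ref{correspondance} is genuinely different from the paper's, which instead picks an arc $\lambda'\subset T'$ realizing $tord(\lambda_i,\lambda')=q_i$ and shows directly (by a minimality-of-pizza argument adapted from the boundary case) that $\lambda'$ must lie in a pizza zone of $T'$ which is then a maximum zone. The obstacle you flag --- an interior maximum pizza zone $M_i=D_\ell$ with $\mu(M_i)=\bar q_i$, so that Corollary \ref{correspondance} has nothing to say --- is real and is \emph{not} ruled out by minimality: if the two pizza slices $T_\ell$ and $T_{\ell+1}$ adjacent to $D_\ell$ are both transversal ($\mu\equiv q$) with their width functions peaking at the common arc $\lambda_\ell$, then by Lemma \ref{depth} one gets $\nu_T(\lambda_\ell,f)=q_\ell$, so $\mu(D_\ell)=q_\ell$, and the set $L_{q_\ell}$ in Corollary \ref{correspondance} is empty. (A concrete model is $T=\{|y|\le x\}$ with $T'$ the graph of $f=x^2+|y|$: the interior pizza zone at $\{y=0\}$ is a maximum zone of order $2=q$.) Your fallbacks (minimality, a direct call to Proposition \ref{Prop:2.20-GS}) are not worked out and as stated do not close this hole, so as written the proof is incomplete exactly where you worried it might be. A second, unflagged subtlety in the same step: even when Corollary \ref{correspondance} does apply, it matches the \emph{width} zones $W_T(\lambda_\ell,f)$, whereas the pizza zone $D_\ell$ is a \emph{depth} zone and, by Lemma \ref{depth}, can be a proper subset of the width zone when $\mu'\ne\mu''$; you need the depth-preservation clause of Proposition \ref{long-q-zone} to restrict the correspondence to the $D_\ell$'s, and that step is not actually carried out.

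The last part of your argument is also incorrect: Lemma \ref{oriented-zone} records the \emph{local} orientation (positive or negative) of a single matched pair of zones, not the global position of the partner along $T'$, and it does not force the partners to appear ``in the matching order'' --- indeed, proving that would prove $\sigma=\mathrm{id}$, which is false. The entire reason the paper introduces $\sigma$ as a (generally nontrivial) permutation, and the content of Example \ref{ladder} and Fig.~\ref{fig:one-to-two}, is that $\sigma$ can permute the interior maximum zones. The proposition only pins $\sigma$ at the boundary arcs; your type-(i) analysis of those two arcs via (\ref{tord-tord}) is fine, but the claim of order-preservation for the interior maxima should be dropped.
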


\begin{proof}
The case $p=1$ follows from Lemma \ref{lem:pizzaslice}, thus we may assume $p>1$.

Let us choose any arcs $\lambda_i\in D_i$, so that $\{T_i=T(\lambda_{i-1},\lambda_i)\}$ is a minimal pizza on $T$ associated with the function $f(x)$.
Let $q_i=ord_{\lambda_i}f$ for $i=0,\ldots,p$, and let $\beta_i=tord(\lambda_{i-1},\lambda_i)$ be the exponent of a pizza slice $T_i$, for $i=1,\ldots,p$.

Consider first the case when $\{\gamma_1\}=\{\lambda_0\}$ is a maximum zone for $f(x)$.
Then $q_0=tord(\gamma_1,\gamma'_1)$ and $q_1=ord_{\lambda_1}f\le q_0$.
If $q_1>\beta_1$, it follows from Lemma \ref{lem:pizzaslice} that, for any arc $\lambda'\subset T'$ such that $tord(\lambda_1,\lambda')=q_1$,
H\"older triangles $T_1$ and $T'_1=T(\gamma'_1,\lambda')$ satisfy (\ref{tord-tord}) and conditions of Proposition \ref{map-graph}.
If $\gamma'_1$ is not a maximum zone for $g(x')$ then, for any arc $\lambda'_1$ such that $T'_1=T(\gamma'_1,\lambda'_1)$ is a pizza slice for a minimal pizza associated with $g(x')$, we have $q'_1=tord(\lambda'_1,T)>q_0$.
Let $\lambda\subset T$ be any arc such that $tord(\lambda,\lambda'_1)=q'_1$.
Then Lemma \ref{lem:pizzaslice} applied to $T'_1$ and $\bar T=T(\gamma_1,\lambda)$ implies that $\bar T$ is a pizza slice for $f(x)$.
Since $tord(\gamma,T')\le q_0$ for any arc $\gamma\subset T_1$, we have $T_1\subset\bar T$, a contradiction with $T_1$ being a pizza slice for a minimal pizza associated with $f(x)$.

Similarly, if $\{\gamma_2\}$ is a maximum zone for $f(x)$ then $\{\gamma'_2\}$ is a maximum zone for $g(x')$.

Suppose next that $p>1$ and $M_i$ is a maximum zone for $f(x)$, where $0<i<p$. Let $\lambda'\subset T'$ be any arc such that $tord(\lambda_i,\lambda')=q_i$. We are going to show that $\lambda'$ belongs to a maximum zone $D'$ for a minimal pizza on $T'$ associated with $g(x')$.

Note first that, if $\lambda'$ belongs to a pizza zone $D'$ for a minimal pizza on $T'$ associated with $g(x')$,
the same arguments as those for $\gamma_1$ and $\gamma'_1$ show that $D'$ is a maximum zone for $g(x')$.

Suppose that $\lambda'$ does not belong to a pizza zone. Let $D'_{j-1}$ and $D'_j$ be two adjacent pizza zones for
a minimal pizza on $T'$ associated with $g(x')$, $\lambda'_{j-1}\in D'_{j-1}$ and $\lambda'_j\in D'_j$, $\lambda'\subset T'_j=T(\lambda'_{j-1},\lambda'_j)$ and $\lambda'\notin D'_{j-1}\cup D'_j$.
The same arguments as those for $\gamma_1$ and $\gamma'_1$ show that $tord(\lambda'_{j-1},T)\le q_i$ and $tord(\lambda'_j,T)\le q_i$.
Since $T'_j$ is a pizza slice, at least one of these inequalities is an equality, say $tord(\lambda'_j,T)=q_i$.
Let $\mu'_j=\nu(\lambda'_j)\le q_i$ be the order of the pizza zone $D'_j$.
Since any arc $\gamma'\subset T'_j$ such that $tord(\gamma',\lambda'_j)\ge\nu(\lambda'_j)$ belongs to $D'_j$ and $\lambda'\notin D'_j$, we have $tord(\lambda',\lambda'_j)<\mu'_j\le q_i$. Thus $\bar T'=T(\lambda',\lambda'_j)$ is a $\bar\beta$-H\"older triangle, where $\bar\beta<q_i$,
such that $tord(\gamma',T)=q_i$ for any arc $\gamma'\subset \bar T'$.
\end{proof}

\begin{definition}\label{characteristic}
\emph{Let $T=T(\gamma_1,\gamma_2)$ and $T'=T(\gamma'_1,\gamma'_2)$ be two normally embedded H\"older triangles,
oriented from $\gamma_1$ to $\gamma_2$ and from $\gamma'_1$ to $\gamma'_2$ respectively, satisfying condition (\ref{tord-tord}).
Let $\{M_i\}_{i=1}^m$ and $\{M'_{i'}\}_{i'=1}^{m'}$ be the maximum zones in $V(T)$ and $V(T')$
for the functions $f(x)=dist(x,T')$ and $g(x')=dist(x',T)$ respectively, ordered according to the orientations of $T$ and $T'$.
According to Proposition \ref{sigma}, we have $m'=m$, and there is a canonical permutation $\sigma$ of the set $\{1,\ldots,m\}$,
the \emph{characteristic permutation} of the pair $T$ and $T'$,
such that $tord(M_i, M'_{\sigma(i)})=tord(M_i, T')=tord(M'_{\sigma(i)},T)$.}
\end{definition}

\begin{definition}\label{def:pizzaslicezone-transversal}\normalfont
Let $T=T(\gamma_1,\gamma_2)$ and $T'=T(\gamma'_1,\gamma'_2)$ be two normally embedded H\"older triangles,
oriented from $\gamma_1$ to $\gamma_2$ and from $\gamma'_1$ to $\gamma'_2$ respectively, satisfying condition (\ref{tord-tord}).
Let $D_\ell\subset V(T)$, for $\ell=0,\ldots,p$, be the pizza zones of a minimal pizza $\{T_\ell=T(\lambda_{\ell-1},\lambda_\ell)\}$ on $T$ associated with the distance function $f(x)=dist(x,T')$, ordered according to the orientation of $T$.
For $\ell=1,\ldots,p$, let $Y_\ell=D_{\ell-1}\cup D_\ell\cup V(T_\ell)$ be the maximal pizza slice zones in $V(T)$ associated with $f$ (see Corollary \ref{zone-slice}).
Let $Q_\ell=Q_f(Y_\ell)=[q_{\ell-1},q_\ell]$, where $q_\ell=ord_{\lambda_\ell} f$, and $\mu_\ell=\mu_{Y_\ell,f}:Q_\ell\to\F\cup\{\infty\}$ be the corresponding exponent intervals and affine width functions (see Definition \ref{def:width function}).
We say that a zone $Y_\ell$, and a pizza slice $T_\ell=T(\lambda_{\ell-1},\lambda_\ell)$ where $\lambda_{\ell-1}\in D_{\ell-1}$ and
$\lambda_\ell\in D_\ell$, is \emph{transversal} if $\mu_\ell(q)\equiv q$, and \emph{non-transversal} otherwise.
\end{definition}

\begin{proposition}\label{pizzaslice-oriented}
Let $T=T(\gamma_1,\gamma_2)$ and $T'=T(\gamma'_1,\gamma'_2)$ be two normally embedded H\"older triangles,
oriented from $\gamma_1$ to $\gamma_2$ and from $\gamma'_1$ to $\gamma'_2$ respectively, satisfying condition (\ref{tord-tord}).
Let $D_\ell$, $Y_\ell$, $Q_\ell=[q_{\ell-1},q_\ell]$ and $\mu_\ell$ be as in Definition \ref{def:pizzaslicezone-transversal}.
Let $D'_{\ell'}$, for $\ell'=0,\ldots,p'$, be the pizza zones of a minimal pizza
on $T'$ associated with $g(x')=dist(x',T)$, ordered according to the orientation of $T'$.
Let $Y'_{\ell'}\subset V(T')$, $Q'_{\ell'}=Q_g(Y'_{\ell'})=[q_{\ell'-1},q_{\ell'}]\subset\F\cup\{\infty\}$ and $\mu'_{\ell'}:Q'_{\ell'}\to\F\cup\{\infty\}$
be the corresponding maximal pizza slice zones, exponent intervals and affine width functions.
Then, for each index $\ell$ such that the pizza slice zone $Y_\ell$ is non-transversal, there is a unique index $\ell'=\tau(\ell)$ such that
$Q'_{\ell'}=Q_\ell$, $\mu'_{\ell'}\equiv\mu_\ell$ and one of the following two conditions holds:
\begin{multline}\label{tord-plus}
tord(D_\ell,T') = tord(D_\ell,D'_{\ell'}) = tord(D'_{\ell'},T), \\
tord(D_{\ell-1},T') = tord(D_{\ell-1},D'_{\ell'-1}) = tord(D'_{\ell'-1},T);
\end{multline}
\begin{multline}\label{tord-minus}
tord(D_\ell,T') = tord(D_\ell,D'_{\ell'-1}) = tord(D'_{\ell'-1},T), \\
tord(D_{\ell-1},T') = tord(D_{\ell-1},D'_{\ell'}) = tord(D'_{\ell'},T).
\end{multline}
\end{proposition}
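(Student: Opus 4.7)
The plan is to produce the index $\tau(\ell)$ by following the family of maximal $q$-order zones in $V(T)$ and $V(T')$ as $q$ ranges over the part of $Q_\ell$ on which $\mu_\ell(q)<q$, and to read off the boundary correspondence from the orientation supplied by Lemma \ref{oriented-zone}. Fix $\ell$ with $Y_\ell$ non-transversal, so the affine $\mu_\ell(q)=aq+b$ satisfies $\mu_\ell(q)<q$ on a sub-interval $Q_\ell^{\circ}\subset Q_\ell$ of positive length. For each $q\in Q_\ell^{\circ}$ and any arc $\gamma\in Y_\ell$ with $\mathrm{ord}_\gamma f=q$, the width zone $W_{Y_\ell}(\gamma,f)$ has order $\mu_\ell(q)<q$; since no other pizza slice of $T$ can be joined to $Y_\ell$ by a $q$-order zone of order $\le\mu_\ell(q)$, this is in fact a maximal $q$-order zone $Z_q\subset V(T)$ for $f$. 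Corollary \ref{correspondance} then supplies a unique matching maximal $q$-order zone $Z'_q\subset V(T')$ for $g$ with $\mu(Z'_q)=\mu_\ell(q)$ and $\mathrm{tord}(Z_q,Z'_q)=q$.

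Next I would localize $Z'_q$ inside a single pizza slice zone of $T'$. Each arc $\gamma'\in Z'_q$ has $\mathrm{ord}_{\gamma'}g=q$ and, by Lemma \ref{depth}, $\nu_{T'}(\gamma',g)\le\mu(Z'_q)<q$, so $\gamma'$ lies strictly inside some pizza slice of $T'$ and not on a pizza arc. The zone property of $Z'_q$ rules out jumping between different pizza slice interiors and produces a unique $\ell'(q)$ with $Z'_q\subset Y'_{\ell'(q)}$ and $q$ in the interior of $Q'_{\ell'(q)}$. That $\ell'(q)$ is independent of $q\in Q_\ell^{\circ}$ follows by nesting the pairs $(Z_q,Z'_q)$ and $(Z_{\tilde q},Z'_{\tilde q})$ inside a single pair of Hölder triangles as in Corollary \ref{long-zone} and invoking Remark \ref{rem:oriented}. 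Set $\tau(\ell)=\ell'$. Since both $\mu_\ell$ and $\mu'_{\ell'}$ are affine and coincide on the positive-length interval $Q_\ell^{\circ}$, they agree on the intersection of their domains; running the construction symmetrically, starting from the non-transversal pizza slice zone $Y'_{\ell'}$, produces back exactly $Y_\ell$ on the $T$-side, pinning down $Q'_{\ell'}=Q_\ell$, $\mu'_{\ell'}\equiv\mu_\ell$, and the uniqueness of $\tau(\ell)$.

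Finally, the pair $(Z_q,Z'_q)$ is either positively or negatively oriented by Definition \ref{def:zones-oriented}, with the orientation independent of $q\in Q_\ell^{\circ}$ by Remark \ref{rem:oriented} and Lemma \ref{oriented-zone}. Let $q$ approach the endpoints of $Q_\ell$. In the positively oriented case, $Z_q$ accumulates on $D_{\ell-1}$ (resp.\ $D_\ell$) exactly when $Z'_q$ accumulates on $D'_{\ell'-1}$ (resp.\ $D'_{\ell'}$), because positive orientation preserves the ordering of boundary arcs of the nested Hölder triangles of Corollary \ref{long-zone}; together with $\mathrm{tord}(Z_q,Z'_q)=q$ and the non-Archimedean property this yields (\ref{tord-plus}). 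In the negatively oriented case the ordering flips, giving (\ref{tord-minus}). The main obstacle I expect is the identification $Q'_{\ell'}=Q_\ell$, i.e.\ ruling out the scenario in which the correspondence subdivides a single pizza slice on one side into several on the other; this relies crucially on the minimality of the pizza decompositions together with the uniqueness of $\tau_q$ in Corollary \ref{correspondance}, and is what forces the symmetric argument in the middle paragraph.
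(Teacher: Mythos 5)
Your scaffolding matches the paper's: for each $q$ in the interior of $Q_\ell$ you use Proposition \ref{long-q-zone} and Corollary \ref{correspondance} to match the maximal $q$-order zone $Z_q\subset Y_\ell$ with a unique maximal $q$-order zone $Z'_q\subset V(T')$ of the same order, localize $Z'_q$ in a single pizza slice zone $Y'_{\ell'}$, and read off the sign from the orientation of the pairs $(Z_q,Z'_q)$. But there are two gaps that keep this from being a proof.

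First, the positive-length framing silently excludes the case $Q_\ell=\{q_\ell\}$ a single point. A non-transversal slice zone with constant $Q_\ell$ is not only allowed, it is the case the paper treats first (there $q_\ell>\mu_\ell$ by non-transversality). Your argument that ``two affine functions agreeing on a positive-length interval agree everywhere'' has nothing to work with here, and the assertion that the symmetric construction ``pins down $Q'_{\ell'}=Q_\ell$'' is exactly what remains to be proved. Second, and more seriously, even in the positive-length case you never actually rule out the scenario $Q'_{\ell'}\supsetneq Q_\ell$, i.e.\ the matching slice zone on the $T'$ side strictly containing $\bigcup_q Z'_q$. You flag this as ``the main obstacle'' and gesture at minimality plus the uniqueness in Corollary \ref{correspondance}, but neither of those by itself forbids the inclusion from being strict. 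The paper closes precisely this gap by invoking Theorem \ref{theorem:very-elementary}: given that $Y'_{\ell'}$ is a pizza slice zone (hence $\tilde T'$ is elementary with respect to $g$) and the pair $(\tilde T,\tilde T')$ satisfies (\ref{tord-tord}), Theorem \ref{theorem:very-elementary} forces $Q_f(\tilde T)=Q_g(\tilde T')$ and $\mu_{\tilde T,f}\equiv\mu_{\tilde T',g}$, which then contradicts minimality of the pizza on $T$ if $Z'\ne Y'_{\ell'}$. Your proposal never invokes Theorem \ref{theorem:very-elementary} or any substitute for it, so the argument as written does not establish $Q'_{\ell'}=Q_\ell$ or $\mu'_{\ell'}\equiv\mu_\ell$ in either case.
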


\begin{proof} Let $Y_\ell\subset V(T)$ be a non-transversal maximal pizza slice zone for a minimal pizza associated with $f$.
For each $q\in Q_\ell$ let $Z_q\subset Y_\ell$ be the maximal $q$-order zone for $f$.
If $Q_\ell=\{q_\ell\}$ is a point, then $q_\ell>\mu_\ell$ since $Y_\ell$ is non-transversal.
It follows from Proposition \ref{long-q-zone} and Corollary \ref{correspondance}
that there is a unique maximal $q_\ell$-order zone $Z'\subset V(T')$ for $g$, of order $\mu_\ell$,
containing all arcs $\gamma'\subset V(T')$ such that $tord(\gamma',Y_\ell)=ord_{\gamma'}g=q_\ell$.
Let us show that $Z'$ is a maximal pizza slice zone for $g$.
Let $Z'\subset Y'_{\ell'}$ where $Y'_{\ell'}$ is a maximal pizza slice zone for a minimal pizza associated with $g$, of order $\mu'_{\ell'}$.
If $Z'\ne Y'_{\ell'}$ then either $Q'_{\ell'}=\{q_\ell\}$ is a point but $\mu_\ell>\mu'_{\ell'}$ or $Q'_{\ell'}$ is not a point.

In the first case, there is a $\beta'$-H\"older triangle $\tilde T'=T(\tilde\gamma'_1,\tilde\gamma'_2)$ such that $\beta'<\mu_\ell$,
$V(\tilde T')\subset Y'_{\ell'}$ and $Z'\cap V(T')$ is a $\mu_\ell$-zone. Let $\tilde T=T(\tilde\gamma_1,\tilde\gamma_2)\subset T$
be a $\beta'$-H\"older triangle, where $\tilde\gamma_1$ and $\tilde\gamma_2$ are two arcs in $T$ such that $tord(\tilde\gamma_1,\tilde\gamma'_1)=tord(\tilde\gamma_2,\tilde\gamma'_2)=q_\ell$.
Since $Y'_{\ell'}$ is a pizza slice zone, $\tilde T'$ is elementary with respect to $g$, and the pair $(\tilde T,\tilde T')$ satisfies (\ref{tord-tord}).
It follows from Theorem \ref{theorem:very-elementary} applied to $\tilde T'$ that $\tilde T$ is elementary with respect to $f$, $Q_f(\tilde T)=\{q_\ell\}$ and $Z\cap V(\tilde T)$ is a $\mu_\ell$-zone.
Since $\beta'<\mu_\ell$, this contradicts the assumption that $Y_\ell$ is a minimal pizza slice zone.

The arguments for the second case, when $Q_\ell$ is a point but $Q'_{\ell'}$ is not a point,
are similar: one can find a H\"older triangle $\tilde T'\subset T'$ such that $Q_g(\tilde T')=Q'_{\ell'}$ is not a point, $\tilde T'$ is elementary with respect to $g$, and $V(\tilde T')\cap Z'$ is a $\mu_\ell$-zone.
Then there is a H\"older triangle $\tilde T\subset T$ such that the pair $(\tilde T,\tilde T')$ satisfies (\ref{tord-tord}).
Theorem \ref{theorem:very-elementary} applied to $\tilde T'$ implies that $Q_f(\tilde T)$ is not a point, while the width function of $\tilde T$ is affine, a contradiction with the assumption that $Y_\ell$ is a minimal pizza slice zone.

Suppose now that $Q_\ell=[q_{\ell-1},q_\ell]$ is not a point. Then Proposition \ref{long-q-zone} and Corollary \ref{correspondance}
applies to each $q$-order zone $Z_q\subset Y_\ell$ for $f$ when $q\in\dot Q_\ell=(q_{\ell-1},q_\ell)$, but may be not applicable when $q=q_{\ell-1}$ or $q=q_\ell$ if $\mu_\ell(q)=q$. For $q\in\dot Q_\ell$, let $Z'_q\subset V(T')$ be the $q$-order zone for $g$, of order $\mu_\ell(q)$, corresponding to $Z_q$.
Then $Z=\bigcup_{q\in\dot Q_\ell} Z_q$ is a pizza slice zone for $f$, and $Z'=\bigcup_{q\in\dot Q_\ell} Z'_q$ is a pizza slice zone for $g$.
Let $\subset Y'_{\ell'}\supset\dot Z'$ be the maximal pizza slice zone for a minimal pizza associated with $g$, of order $\mu'_{\ell'}$.
The same arguments as above show that $Q'_{\ell'}=Q_\ell$ and $\mu'_{\ell'}\equiv\mu_{\ell}$.

Note that the pairs of zones $(Z_q,Z'_q)$ are either all positively oriented or all negatively oriented (see Definition \ref{def:zones-oriented}).
Accordingly, either (\ref{tord-plus}) or (\ref{tord-minus}) holds for the pairs of maximal pizza slice zones $(Y_\ell,Y'_{\ell'})$.
\end{proof}

\begin{definition}\label{def:tau}\normalfont
Let $T=T(\gamma_1,\gamma_2)$ and $T'=T(\gamma'_1,\gamma'_2)$ be two normally embedded H\"older triangles,
oriented from $\gamma_1$ to $\gamma_2$ and from $\gamma'_1$ to $\gamma'_2$ respectively, satisfying condition (\ref{tord-tord}).
Let $\{T_\ell\}$ and $\{T'_{\ell'}\}$ be minimal pizzas on $T$ and $T'$ for the distance functions $f(x)=dist(x,T')$ and $g(x')=dist(x',T)$ respectively, ordered according to the orientations of $T$ and $T'$.
Then, according to Proposition \ref{pizzaslice-oriented}, there is a canonical one-to-one correspondence $\ell'=\tau(\ell)$ between the sets
of non-transversal pizza slices $T_\ell$ for a minimal pizza on $T$ associated with $f(x)=dist(x,T')$, ordered according to the orientation of $T$, and the set of non-transversal pizza slices $T'_{\ell'}$ for a minimal pizza on $T'$ associated with $g(x')=dist(x',T)$, ordered according to the orientation of $T'$.
This defines a \emph{characteristic correspondence} $\tau$ between the sets of non-transversal pizza slices of $T$ and $T'$.
In particular, these two sets have the same number of elements.
We say that a pair of non-transversal pizza slice zones $Y_\ell$ and $Y'_{\ell'}$ where $\ell'=\tau(\ell)$, and a pair of non-transversal pizza slices $T_\ell$ and $T'_{\ell'}$, is \emph{positively oriented} if (\ref{tord-plus}) holds and \emph{negatively oriented} otherwise (see Definition \ref{def:pizzaslice-oriented}).
Thus $\tau$ is a \emph{signed} correspondence, with the signs $+$ and $-$ assigned to the positively and negatively oriented pairs of non-transversal pizza slice zones.
\end{definition}

\begin{remark}\label{rem:tau}\normalfont
For each pair $(T_\ell,T'_{\ell'})$ of non-transversal pizza slices, where $\ell'=\tau(\ell)$, the signed correspondence $\tau$
defines a correspondence between the two pizza zones $D_{\ell-1}$ and $D_\ell$ and the two pizza zones $D'_{\ell'-1}$ and $D'_{\ell'}$,
in the same (resp., opposite) order if the pair is positively (resp., negatively) oriented.
This correspondence between a subset of pizza zones of $T$ and a subset of pizza zones of $T'$ may be not one-to-one:
a pizza zone of $T$ common to two non-transversal pizza slices may correspond to two different pizza zones of $T'$,
and two different pizza zones of $T$ may correspond to the same pizza zone of $T'$ (see Fig.~\ref{fig:oneloop}).
However, it is one-to-one on the set of those pizza zones which are also maximum zones (see Proposition \ref{sigma=tau}).
\end{remark}

\begin{figure}
\centering
\includegraphics[width=5.5in]{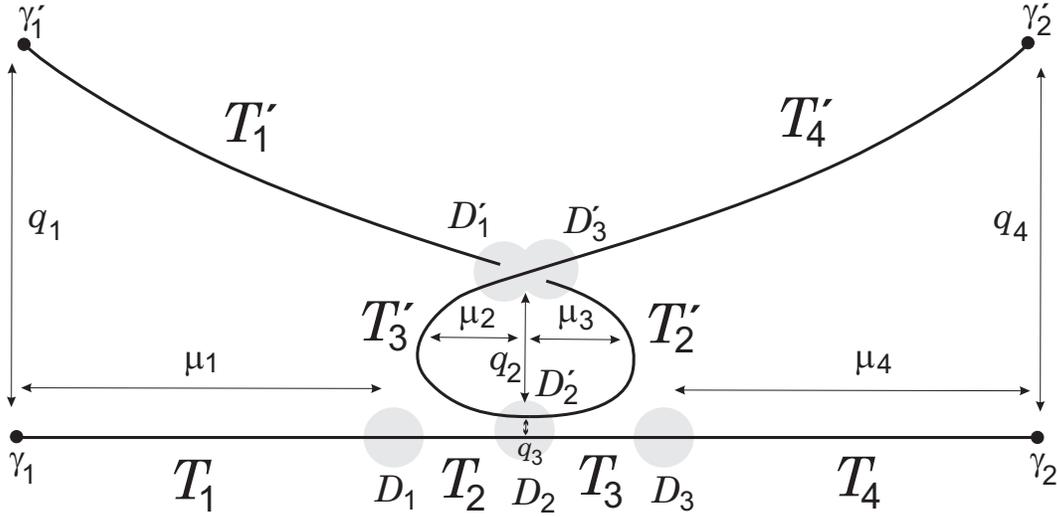}
\caption{Two normally embedded H\"older triangles in Remark \ref{rem:tau}.
Shaded disks indicate pizza zones of minimal pizzas on $T$ and $T'$.
Assuming $q_1=q_4>\mu_1=\mu_4$ and $q_2=\mu_2=\mu_3<q_3$, there are four non-transversal pairs of pizza slices: $(T_1,T'_1)$, $(T_2,T'_3)$, $(T_3,T'_2)$, $(T_4,T'_4)$.
The correspondence $\tau(1)$ maps $D_1$ to $D'_1$, while $\tau(2)$ maps $D_1$ to $D'_3$.}\label{fig:oneloop}
\end{figure}

\begin{proposition}\label{sigma=tau}
Let $T=T(\gamma_1,\gamma_2)$ and $T'=T(\gamma'_1,\gamma'_2)$ be normally embedded H\"older triangles,
oriented from $\gamma_1$ to $\gamma_2$ and from $\gamma'_1$ to $\gamma'_2$ respectively, satisfying condition (\ref{tord-tord}).
Let $\{T_\ell\}$ and $\{T'_{\ell'}\}$ be minimal pizzas on $T$ and $T'$ for the distance functions $f(x)=dist(x,T')$ and $g(x')=dist(x',T)$ respectively, ordered according to the orientations of $T$ and $T'$.
Let $(T_\ell,T'_{\ell'})$, where $\ell'=\tau(\ell)$, be a pair of non-transversal pizza slices
such that one of the pizza zones of $T_\ell$, say $D=D_\ell$, is a maximum zone $M_i\subset V(T)$. Then the corresponding pizza zone $D'$ of $T'_{\ell'}$ (either $D'=D'_{\ell'}$ for a positively oriented pair $(T_\ell,T'_{\ell'})$ or $D'=D'_{\ell'-1}$ for a negatively oriented pair) is a maximum zone $M'_{i'}\subset V(T')$, where $i'=\sigma(i)$.
\end{proposition}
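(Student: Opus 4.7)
The plan is to identify $D'$ (the $\tau$-correspondent from Proposition \ref{pizzaslice-oriented}) with $M'_{\sigma(i)}$ (the $\sigma$-correspondent from Proposition \ref{sigma}). Both are pizza zones of the minimal pizza on $T'$ associated with $g$, and by Proposition \ref{MP} distinct pizza zones of a minimal pizza are pairwise disjoint. Hence it suffices to exhibit a single arc $\lambda' \in D' \cap M'_{\sigma(i)}$, which forces the coincidence $D' = M'_{\sigma(i)}$.

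In the boundary case, where $D$ is singular, we have $D = \{\gamma_1\}$ or $D = \{\gamma_2\}$. Condition (\ref{tord-tord}) combined with Proposition \ref{pizzaslice-oriented} (formulas (\ref{tord-plus})/(\ref{tord-minus})) forces $D' = \{\gamma'_1\}$ or $D' = \{\gamma'_2\}$ respectively, and the final assertion of Proposition \ref{sigma} identifies these with $M'_{\sigma(1)}$ and $M'_{\sigma(m)}$. In the interior case, set $q = q_\ell$, the common value of $f$ on $D$. By (\ref{tord-plus}) or (\ref{tord-minus}) we have $tord(D, D') = q$, so for any $\lambda \in D$ we can select $\lambda' \in D'$ with $tord(\lambda, \lambda') = q$. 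Since $\lambda \in D = M_i$ and $tord(\lambda, \lambda') = q = \bar q_i$, the argument in the proof of Proposition \ref{sigma} (the two-case analysis based on whether $\lambda'$ lies in a pizza zone of $T'$ or strictly in the interior of some pizza slice $T'_j$) applies to place $\lambda'$ in some maximum zone $M' \subset V(T')$. By the canonical tangency characterization of $\sigma$ given in Proposition \ref{sigma}, the only maximum zone of $V(T')$ achieving tangency $\bar q_i$ with $M_i$ is $M'_{\sigma(i)}$, so $M' = M'_{\sigma(i)}$. Hence $\lambda' \in D' \cap M'_{\sigma(i)}$, and disjointness of pizza zones yields $D' = M'_{\sigma(i)}$.

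The main obstacle is the invocation of the proof of Proposition \ref{sigma} to place the chosen arc $\lambda'$ in a maximum pizza zone rather than in the interior of a pizza slice or in a minimum zone. Ruling out the former requires the non-Archimedean tangency estimates produced in the second case of that proof (using that $tord(\lambda', T) = q$ together with the positions of the bordering arcs $\lambda'_{j-1}, \lambda'_j$ of a pizza slice $T'_j$ containing $\lambda'$); ruling out the latter uses that a maximum zone characterization via tangency to $M_i$ is determined by $\bar q_i$ alone, so any pizza zone meeting this tangency condition must lie in the canonical bijection of Proposition \ref{sigma}. A minor technicality is the edge case in which the affine width satisfies $\mu_\ell(q_\ell) = q_\ell$, causing $\mu(D)$ to reach $q$; this is handled by choosing $\lambda \in D$ so that $\nu_D(\lambda, f) < q$, which is possible because the non-singular zone $D$ is a closed perfect zone of order strictly below $q$ on generic arcs.
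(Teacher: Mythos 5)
Your approach is genuinely different from the paper's and, in outline, arguably cleaner. The paper splits into cases according to whether the \emph{other} maximal pizza slice zone adjacent to $D_\ell$, namely $Y_{\ell+1}$, is transversal: when it is non-transversal the paper applies Proposition~\ref{pizzaslice-oriented} to both adjacent zones, and when it is transversal the paper needs Lemma~\ref{lem:tord} together with Theorem~\ref{theorem:very-elementary} to manufacture auxiliary Hölder triangles. Your argument sidesteps this case analysis entirely: you only need Proposition~\ref{pizzaslice-oriented} for the one slice $T_\ell$ you are given to be non-transversal, then you exhibit a single arc in $D'\cap M'_{\sigma(i)}$ and invoke disjointness of pizza zones (Proposition~\ref{MP}). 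That buys you a more uniform argument. You could simplify further: since by construction $\lambda'$ already lies in the pizza zone $D'$, you do not need the full two-case analysis of the proof of Proposition~\ref{sigma} at all --- only the first case (``if $\lambda'$ belongs to a pizza zone then that pizza zone is a maximum zone'') is relevant, and the second case (``$\lambda'$ strictly interior to a slice'') is vacuous here.

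There are two points that need to be tightened. First, from $tord(D,D')=q$ you assert that \emph{for any} $\lambda\in D$ there is $\lambda'\in D'$ with $tord(\lambda,\lambda')=q$; the tangency order of two zones is a supremum, and this stronger per-arc statement needs a justification (Proposition~\ref{long-q-zone} gives it when $\nu_{Z}(\lambda,f)<q$ for the ambient maximal $q$-order zone $Z\supset D$, which in turn requires $\mu(Z)<q$). Second, the ``minor technicality'' you mention --- the case $\mu_\ell(q_\ell)=q_\ell$, which can make $D$ a singular pizza zone with $\nu_\ell=\infty$ and the surrounding maximal $q_\ell$-order zone possibly of order $\ge q_\ell$ --- is precisely the case where your fix (``choose $\lambda$ so that $\nu_D(\lambda,f)<q$'') is unavailable because $D=\{\lambda_\ell\}$ offers no choice. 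This is the regime where the paper's Lemma~\ref{lem:tord}/Theorem~\ref{theorem:very-elementary} argument is doing real work, and your sketch does not replace it. Finally, the uniqueness of the maximum zone of $V(T')$ at tangency $\bar q_i$ with $M_i$ is stated but not argued; the paper relies on the same uniqueness implicitly, so this is not a defect peculiar to your proof, but it should be acknowledged as being inherited from the canonicity claim in Proposition~\ref{sigma} rather than being self-evident.
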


\begin{proof} If $D$ is a boundary arc of $T$ then the statement follows from Proposition \ref{sigma},
since $D'$ is also a boundary arc of $T'$ and a (singular) maximum zone.

If $D=D_\ell$ is not a boundary arc, and both maximal pizza slice zones $Y_\ell$ and $Y_{\ell+1}$ containing $D_\ell$ are non-transversal, then Proposition \ref{pizzaslice-oriented} implies that the corresponding zones in $V(T')$ are either $Y'_{\ell'}$ and $Y'_{\ell'+1}$ (if $(Y_\ell,Y'_{\ell'})$ is a positively oriented pair) or  $Y'_{\ell'}$ and $Y'_{\ell'-1}$ (if $(Y_\ell,Y'_{\ell'})$ is a negatively oriented pair). In both cases, Proposition \ref{pizzaslice-oriented} implies that $D'$ is a maximum zone such that $tord(D,D')=tord(D,T')=tord(D',T)=q_\ell$, thus $i'=\sigma(i)$.

Otherwise, if $Y_\ell$ is a non-transversal zone but $Y_{\ell+1}$ is transversal, Lemma \ref{lem:tord} implies that there are two $\tilde\beta$-H\"older triangles $\tilde T=T(\tilde\gamma_1,\tilde\gamma_2)\subset T_{\ell+1}$ and $\tilde T'=T(\tilde\gamma'_1,\tilde\gamma'_2)\subset T'$ satisfying (\ref{tord-tord}), where $\tilde\beta<q_\ell$,
such that $\tilde T'\cap T'_\ell=\{\tilde\gamma'_1\}$, $\tilde\gamma_1\in D$, $\tilde\gamma'_1\in D'$ and $tord(\tilde\gamma_1,\tilde\gamma'_1)=q_\ell$.
Theorem \ref{theorem:very-elementary} applied to $\tilde T$ and $\tilde T'$ implies that $D'$
is a maximum zone such that $tord(D,D')=tord(D,T')=tord(D',T)=q_\ell$, thus $i'=\sigma(i)$.
\end{proof}

\begin{proposition}\label{signed-Permutation}
Let $T=T(\gamma_1,\gamma_2)$ and $T'=T(\gamma'_1,\gamma'_2)$ be two normally embedded H\"older triangles,
oriented from $\gamma_1$ to $\gamma_2$ and from $\gamma'_1$ to $\gamma'_2$ respectively, satisfying condition (\ref{tord-tord}).
Then the sign assigned to each pair $(T_\ell,T'_{\ell'})$ of non-transversal pizza slices such that $\ell'=\tau(\ell)$
is completely determined by the minimal pizzas $\{T_\ell\}$ and $\{T'_{\ell'}\}$, the characteristic permutation $\sigma$ and the
characteristic correspondence $\tau$.
\end{proposition}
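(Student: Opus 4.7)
The plan is to decompose the argument according to whether the interval $Q_\ell$ has two distinct endpoints or is a single point. By Proposition \ref{pizzaslice-oriented}, we already have $Q_\ell = Q'_{\ell'}$ and $\mu_\ell \equiv \mu'_{\ell'}$, so the only unknown in the sign is the pairing between the two endpoints of $Q'_{\ell'}$ and those of $Q_\ell$.

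I would start by unpacking the algebraic content of (\ref{tord-plus}) and (\ref{tord-minus}). Because $tord(D_\ell,T') = q_\ell$ and $tord(D'_{\ell'},T) = q_{\ell'}$, the positive-sign condition forces $q_\ell = q_{\ell'}$, and the negative-sign condition forces $q_\ell = q_{\ell'-1}$. Hence if $q_{\ell-1} \ne q_\ell$ the two endpoints of $Q_\ell$, and of $Q'_{\ell'}$, are distinguished numerically, and the sign is recovered directly from the pizzas and $\tau$ by checking which endpoint of $Q'_{\ell'}$ equals $q_\ell$. This handles all non-transversal pairs whose exponent interval is nondegenerate, and it uses neither $\sigma$ nor any finer invariant.

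The delicate case is $Q_\ell = \{q\}$, where all four $q$-values coincide and the $q$-matching becomes uninformative. Here I would invoke $\sigma$ and Proposition \ref{sigma=tau}: if either $D_{\ell-1}$ or $D_\ell$ is a maximum zone $M_i$, the canonical correspondence identifies its image in $V(T')$ as the unique maximum zone $M'_{\sigma(i)}$; this max zone is a pizza zone adjacent to $T'_{\ell'}$ and hence equal to $D'_{\ell'-1}$ or to $D'_{\ell'}$, and the identification fixes the sign. When neither endpoint of $T_\ell$ is a maximum zone, both are minimum zones, and the flanking pizza slices $T_{\ell-1}$, $T_{\ell+1}$ carry nondegenerate exponent intervals and (if non-transversal) have signs determined by the previous paragraph. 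I would then propagate the orientation through the valley $Y_\ell$ using these signs together with the pairing of the maximum zones surrounding the valley via $\sigma$.

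The principal obstacle is this last subcase. Remark \ref{rem:tau} shows that a single pizza zone may correspond to different pizza zones of $T'$ through different pairs, so a naive propagation can fail. What must be exploited is the coherence of the orientation inside a single maximal $q$-order zone guaranteed by Lemma \ref{oriented-zone}: the oriented pair $(Y_\ell, Y'_{\ell'})$ carries one globally consistent sign, and the combinatorial task is to show that this sign is forced by the already-determined signs of the neighboring non-transversal pairs together with the action of $\sigma$ on the max zones at the boundary of the valley. Verifying that this combination is always strong enough to rule out the opposite sign is where I would expect the proof to require the most care.
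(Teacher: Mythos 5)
Your decomposition into cases is the same as the paper's, and for the first two subcases the argument agrees: when $Q_\ell$ is nondegenerate the sign is read off from which endpoint of $Q'_{\ell'}$ matches $q_\ell$ (equivalently, from where the non-constant width functions $\mu_\ell\equiv\mu'_{\ell'}$ attain their maximum), and when $Q_\ell$ is a point and one of $D_{\ell-1},D_\ell$ is a maximum zone $M_i$, the sign is determined by whether $M_{\sigma(i)}$ lands on $D'_{\ell'-1}$ or on $D'_{\ell'}$. Your observation that the adjacent slices $T_{\ell-1},T_{\ell+1}$ have nondegenerate exponent intervals is also correct (two adjacent plateau slices at the same height would merge, contradicting minimality of the pizza).

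Where your write-up stops short is exactly the case you flag as the principal obstacle, $Q_\ell=\{q_\ell\}$ with both $D_{\ell-1}$ and $D_\ell$ minimum zones. Your plan is to propagate orientation from the signs of the neighboring non-transversal slices and from $\sigma$ on the "surrounding" max zones, but you don't pin down why $\sigma$ of the max zone nearest to $Y_\ell$ on one side must land adjacent to $Y'_{\ell'}$ rather than somewhere far away in $T'$. That is the one genuine ingredient you are missing. The paper's resolution is short: since $q_\ell>\mu_\ell$, the two maximum pizza zones of $V(T)$ closest to $Y_\ell$ on either side are mapped by $\sigma$ to the two maximum pizza zones of $V(T')$ closest to $Y'_{\ell'}$, in the same order if the sign is $+$ and in the opposite order if the sign is $-$, because one can only pass from one side of $Y_\ell$ to the other through a part of $T'$ where $q\le\mu_\ell$. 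This "circumnavigation" constraint, which follows from the zone correspondence of Proposition \ref{long-q-zone} applied to $Y_\ell$ and $Y'_{\ell'}$, is what forces the nearest flanking max zones of $Y_\ell$ and $Y'_{\ell'}$ to be $\sigma$-paired, and hence determines the sign from the combinatorial data alone. Adding this observation would close the gap in your sketch; note also that the fallback you propose (propagating from signed neighboring slices) is not by itself sufficient, since the flanking slices may all be transversal and thus carry no sign.
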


\begin{proof}
Let $Y_\ell\subset V(T)$ and $Y'_{\ell'}\subset V(T')$ be two non-transversal pizza slice zones such that $\ell'=\tau(\ell)$.
According to Definition \ref{def:tau}, the pair $(Y_\ell, Y'_{\ell'})$ is positively oriented if (\ref{tord-plus}) holds
and negatively oriented if (\ref{tord-minus}) holds. If $Q_\ell=[q_\ell,q_{\ell+1}]$ is not a point then $q_\ell\ne q_{\ell+1}$, thus
$Q'_{\ell'}=[q'_{\ell'},q'_{\ell'+1}]$ is also not a point, and
the pair is positive when $q_\ell=q'_{\ell'}$ and negative otherwise. If $Q_\ell=\{q_\ell\}$ is a point then $\mu_\ell<q_\ell$,
and each of the pizza zones $D_\ell$ and $D_{\ell+1}$ is either a maximum or a minimum zone.
If, say, $D_\ell=M_i$ is a maximum zone, then the pair is positive when $D'_{\ell'}=M_{\sigma(i)}$ and negative otherwise.

The case when each of them contains a boundary arc is trivial, so we may assume that they are interior zones. If $Q_\ell=Q_{\ell'}$ is not a point then the sign is uniquely determined by the maxima of non-constant affine functions $\mu_\ell(q)\equiv\mu'_{\ell'}(q)$. If $Q_\ell=Q'_{\ell'}=\{q_\ell\}>\mu_\ell$ is a point, then the pizza zones $D_{\ell-1}$ and $D_\ell$ correspond to the pizza zones $D'_{\ell'-1}$ and $D'_{\ell'}$ in the same order if the sign is positive, and in the opposite order if the sign is negative. Note that each of these zones is either a maximum or a minimum zone, since on one side of each of them $q$ is constant. The correspondence between the pizza slice zones sends a maximum pizza zone in $V(T)$ to a maximum pizza zone in $V(T')$, and a minimum pizza zone in $V(T)$ to a minimum pizza zone in $V(T')$. If one of the pizza zones in $V(T)$ is a maximum zone and another is a minimum zone, then the same is true for the pizza zones in $V(T')$, and the correspondence is uniquely defined. If both pizza zones are maximum zones then the correspondence is defined by $\sigma$. If both pizza zones in $V(T)$ are minimum zones, since $q_\ell>\mu_\ell$, the two maximum pizza zones in $V(T)$ closest to $Y_\ell$ are mapped by $\sigma$ to  the two maximum pizza zones in $V(T')$ closest to $Y'_{\ell'}$ in the same order if the sign is positive and in the opposite order if the sign is negative: one can only get from one side of $Y_\ell$ to another side through a part of $T'$ where $q\le\mu_\ell$.
\end{proof}

\begin{definition}\label{sigma-pizza}
\emph{Let $T=T(\gamma_1,\gamma_2)$ and $T'=T(\gamma'_1,\gamma'_2)$ be two normally embedded H\"older triangles,
oriented from $\gamma_1$ to $\gamma_2$ and from $\gamma'_1$ to $\gamma'_2$ respectively, satisfying condition (\ref{tord-tord}).
Let $\{T_\ell\}$ and $\{T'_{\ell'}\}$ be minimal pizzas on $T$ and $T'$ for the distance functions $f(x)=dist(x,T')$ and $g(x')=dist(x',T)$ respectively, ordered according to the orientations of $T$ and $T'$.
A $\sigma\tau$-\emph{pizza} on $T\cup T'$ is a triplet consisting of the pair of minimal pizzas $\{T_\ell\}$ and $\{T'_{\ell'}\}$, the characteristic  permutation $\sigma$ of the maximum pizza zones in $V(T)$ and $V(T')$, and the characteristic correspondence $\tau$ of the non-transversal pizza slices of $T$ and $T'$.
Two $\sigma\tau$-pizzas $(\{T_\ell\},\,\{T'_{\ell'}\},\,\sigma_T,\,\tau_T)$ on $T\cup T'$ and $(\{S_\ell\},\,\{S'_{\ell'}\},\,\sigma_S,\,\tau_S)$ on $S\cup S'$ are \emph{combinatorially equivalent} if the pairs $(\{T_\ell\},\,\{T'_{\ell'}\})$ and $(\{S_\ell\},\,\{S'_{\ell'}\})$ are combinatorially equivalent, $\sigma_T=\sigma_S$ and $\tau_T=\tau_S$.}
\end{definition}

\begin{theorem}\label{invariant} Let $(T,T')$ and $(S,S')$ be two oriented pairs of normally embedded
H\"older triangles satisfying condition (\ref{tord-tord}).
If there is an orientation-preserving outer bi-Lipschitz homeomorphism $H:T\cup T'\to S\cup S'$ such that $H(T)=S$ and $H(T')=S'$, then the $\sigma\tau$-pizzas of the pairs $(T,T')$ and $(S,S')$ are combinatorially equivalent.
\end{theorem}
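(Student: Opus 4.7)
The plan is to reduce the theorem to combining the invariance of minimal pizzas under inner bi-Lipschitz maps (Theorem \ref{pizza-theorem}) with the observation that every piece of combinatorial data entering the $\sigma\tau$-pizza is defined purely in terms of tangency orders between arcs and of the orientations of $T$ and $T'$, both of which are preserved by an orientation-preserving outer bi-Lipschitz homeomorphism $H$.

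First I would note that $H|_T: T\to S$ and $H|_{T'}: T'\to S'$ are outer bi-Lipschitz, hence, since $T,T',S,S'$ are normally embedded, also inner bi-Lipschitz. Since $H$ distorts outer distances by at most a bi-Lipschitz constant, for any arcs $\gamma\subset T$ and $\lambda\subset T'$ one has $tord(\gamma,\lambda)=tord(H(\gamma),H(\lambda))$. In particular $ord_\gamma f_T=tord(\gamma,T')=tord(H(\gamma),S')=ord_{H(\gamma)}f_S$, where $f_T=dist(\cdot,T')$ and $f_S=dist(\cdot,S')$, and similarly for $g_{T'}$ and $g_{S'}$. By Theorem \ref{pizza-theorem}, a minimal pizza on $T$ associated with $f_T$ is equivalent to a minimal pizza on $S$ associated with $f_S$, and similarly on the primed triangles. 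This already establishes the combinatorial equivalence of the underlying pairs of minimal pizzas $(\{T_\ell\},\{T'_{\ell'}\})$ and $(\{S_\ell\},\{S'_{\ell'}\})$.

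Next I would use Remark \ref{width-depth} to see that $H|_T$ carries depth and width zones of $f_T$ to those of $f_S$, and Proposition \ref{MP} to deduce that the canonical ordered sequences of pizza zones in $V(T)$ and $V(S)$ correspond under $H$; similarly on $T'$ and $S'$. Since maximum pizza zones are characterised entirely by the combinatorics of the values $q_\ell=tord(D_\ell,T')$ along this oriented sequence together with the endpoint conditions of Definition \ref{maxmin}, $H$ sends maximum zones to maximum zones in the correct order. Because the characteristic permutation $\sigma$ is determined by the conditions $tord(M_i,M'_{\sigma(i)})=tord(M_i,T')=tord(M'_{\sigma(i)},T)$ and all three tangency orders are $H$-invariant, I obtain $\sigma_T=\sigma_S$. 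The same argument, applied to maximal pizza slice zones and the transversality condition of Definition \ref{def:pizzaslicezone-transversal}, yields a bijection between the non-transversal pizza slices of $T$ and those of $S$ that intertwines the unsigned correspondence $\tau$.

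Finally, for the signs attached to $\tau$, I would invoke Definition \ref{def:tau}: the sign of a pair $(T_\ell,T'_{\ell'})$ with $\ell'=\tau(\ell)$ is positive when (\ref{tord-plus}) holds and negative when (\ref{tord-minus}) holds, and the distinction between these two is controlled by the induced orientations of the two bounding pizza zones of $T_\ell$ relative to those of $T'_{\ell'}$. Since $H$ is orientation-preserving on each of $T$ and $T'$ separately, positive pairs go to positive pairs and negative to negative, so $\tau_T=\tau_S$ as signed correspondences. I expect the main obstacle to be careful orientation bookkeeping in this last step: one must verify that the sign is truly an intrinsic invariant of the oriented pair of triangles (rather than an artifact of the choices involved in writing down a minimal pizza), so that Proposition \ref{signed-Permutation} can be applied to conclude that $\sigma_T=\sigma_S$ together with the unsigned version of $\tau_T=\tau_S$ already forces the signed equality. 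Once this is settled, the three equalities combine to give the combinatorial equivalence demanded by Definition \ref{sigma-pizza}.
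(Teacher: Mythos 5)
Your proposal is correct and follows essentially the same route as the paper: invariance of tangency orders under the outer bi-Lipschitz map $H$, Theorem \ref{pizza-theorem} for equivalence of the underlying minimal pizzas, preservation of maximum zones and of the tangency orders defining $\sigma$, preservation of non-transversal pizza slice zones and width functions defining $\tau$, and Proposition \ref{signed-Permutation} to conclude that the signed version of $\tau$ is also preserved. Your tentative direct orientation argument for the signs is unnecessary once Proposition \ref{signed-Permutation} is invoked, which is exactly how the paper handles that step.
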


\begin{proof}
Let $f_T(x)=dist(x,T')$, $g_T(x')=dist(x',T)$, $f_S(y)=dist(y,S')$ and $g_S(y')=dist(y',S)$ be the distance functions
defined on $T,\,T',\,S$ and $S'$ respectively.
Let $M_i\subset V(T)$ and $M'_{i'}\subset V(T')$ be the maximum zones for the pair $(T,T')$,
and let $N_i\subset V(S)$ and $N'_{i'}\subset V(S')$ be the maximum zones for the pair $(S,S')$.

Since $H$ is an outer bi-Lipschitz homeomorphism, $f_T$ is Lipschitz contact equivalent to $f_S$, and $g_T$ is Lipschitz contact equivalent to $g_S$. Theorem \ref{pizza-theorem} implies that the corresponding pairs of minimal pizzas $(\{T_\ell\},\,\{T'_{\ell'}\})$ and $(\{S_\ell\},\,\{S'_{\ell'}\})$
are combinatorially equivalent.
Accordingly, $H$ maps each maximum zone $M_i$ to the maximum zone $N_i$, and each maximum zone $M'_{i'}$ to the maximum zone $N'_{i'}$.
A pair of maximum zones $(M_i, M'_{i'})$, where $i'=\sigma_T(i)$, is mapped to the pair of maximum zones $(N_i, N'_{i'})$ preserving the order of contact
between these zones. This implies that $i'=\sigma_S(i)$, thus the permutations $\sigma_T$ and $\sigma_S$ are equal.

Moreover, since $H$ preserves the tangency orders between arcs, it maps each maximal pizza slice zone $Y_\ell$ of a minimal pizza on $T$
associated with $f_T$ to the maximal pizza slice zone $Z_\ell$ of a minimal pizza on $S$ associated with $f_S$,
and each maximal pizza slice zone $Y'_{\ell'}$ of a minimal pizza on $T'$
associated with $g_T$ to the maximal pizza slice zone $Z'_{\ell'}$ of a minimal pizza on $S'$ associated with $g_S$,
with the corresponding width functions preserved.
Accordingly, if $(T_\ell,T'_{\ell'})$ is a non-transversal pair of pizza slices of minimal pizzas associated with $f_T$ and $g_T$,
where $\ell'=\tau_T(\ell)$,
then $(H(T_\ell),\,H(T'_{\ell'}))$ is a non-transversal pair of pizza slices of minimal pizzas associated with $f_S$ and $g_S$,
such that $V(H(T_\ell))\subset Z_\ell$ and $V(H(T'_{\ell'}))\subset Z'_{\ell'}$. This implies that $\ell'=\tau_S(\ell)$,
thus the correspondences $\tau_T$ and $\tau_S$ are equal.
Proposition \ref{signed-Permutation} implies that $\tau_T$ and $\tau_S$ are equal also as signed correspondences.
\end{proof}

The following conjecture states that, conversely, two pairs of normally embedded H\"older triangles satisfying condition (\ref{tord-tord})
with the same $\sigma\tau$-pizza invariant are outer bi-Lipschitz equivalent, thus the $\sigma\tau$-pizza is a complete combinatorial
invariant of an outer bi-Lipschitz equivalence class of pairs of normally embedded H\"older triangles.

\begin{conjecture}
 Let $(T,T')$ and $(S,S')$ be two ordered oriented pairs of normally embedded H\"older triangles satisfying condition (\ref{tord-tord}).
 If the $\sigma\tau$-pizza of the pair $(T,T')$ is combinatorially equivalent to the $\sigma\tau$-pizza of the pair $(S,S')$,
then there is an orientation-preserving outer bi-Lipschitz homeomorphism $H:T\cup T'\to S\cup S'$ such that $H(T)=S$ and $H(T')=S'$.
\end{conjecture}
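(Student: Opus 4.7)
The plan is to construct an outer bi-Lipschitz homeomorphism $H:T\cup T'\to S\cup S'$ by decomposing both pairs along their minimal pizzas and reducing each local piece to the elementary case of Theorem \ref{theorem:very-elementary}. First, combinatorial equivalence of the minimal pizzas together with Theorem \ref{pizza-theorem} supplies inner bi-Lipschitz homeomorphisms $h:T\to S$ and $h':T'\to S'$ that preserve the pizza structure of $f_T$ and $g_T$ respectively, i.e., carry pizza zones, maximal pizza slice zones, and width functions to their counterparts. Using the equality of characteristic permutations $\sigma_T=\sigma_S$, the pair $(h,h')$ can be chosen so that each pair of maximum zones $(M_i,M'_{\sigma(i)})$ in $(T,T')$ is sent to the pair $(N_i,N'_{\sigma(i)})$ in $(S,S')$; similarly, using the signed equality $\tau_T=\tau_S$, each non-transversal pizza slice pair $(T_\ell,T'_{\tau(\ell)})$ is mapped to $(S_\ell,S'_{\tau(\ell)})$ with matching orientation.

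Next, I build an outer bi-Lipschitz map locally on each pair of corresponding pieces. For a non-transversal pair of pizza slices $(T_\ell,T'_{\tau(\ell)})$, the sign of $\tau(\ell)$ identifies via (\ref{tord-plus})--(\ref{tord-minus}) the boundary-arc correspondence that makes the pair satisfy condition (\ref{tord-tord}); since $T_\ell$ is a pizza slice, it is elementary with respect to $f_T|_{T_\ell}$, so Theorem \ref{theorem:very-elementary} produces a homeomorphism $\phi_\ell:T_\ell\cup T'_{\tau(\ell)}\to T_\ell\cup\Gamma_\ell$ bi-Lipschitz in the outer metric, where $\Gamma_\ell$ is the graph of $f_T|_{T_\ell}$. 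An analogous $\psi_\ell$ exists on the $S$-side, and then $\psi_\ell^{-1}\circ \widehat{h|_{T_\ell}}\circ\phi_\ell$, where $\widehat{h|_{T_\ell}}$ is the natural lift of $h|_{T_\ell}$ to graphs, yields an outer bi-Lipschitz homeomorphism $H_\ell:T_\ell\cup T'_{\tau(\ell)}\to S_\ell\cup S'_{\tau(\ell)}$ extending $h|_{T_\ell}$. For a transversal pizza slice $T_\ell$ the width function satisfies $\mu_\ell(q)\equiv q$, so the part of $T'$ paired with $T_\ell$ lies in zones already handled by the adjacent non-transversal slices, and Proposition \ref{Prop:2.20-GS} produces the missing piece of $H$ on $T_\ell$ directly.

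Finally, the local maps $H_\ell$ must be glued along arcs lying in shared pizza zones. The combinatorial equivalence of the pizzas matches pizza zones of $T$ with those of $S$ canonically, and Proposition \ref{sigma=tau} together with Remark \ref{rem:tau} ensures that the induced correspondence on $V(T')$ is consistent on maximum zones, which carry the crucial gluing information between adjacent slices. The main obstacle I foresee is precisely this gluing step: when a pizza zone of $V(T)$ is shared between a transversal and a non-transversal slice, or when two distinct pizza zones of $V(T)$ are sent by $\tau$ to a single pizza zone of $V(T')$ (as in Fig.~\ref{fig:oneloop}), the two local homeomorphisms constructed on adjacent slices must be modified near the common boundary so as to agree on a full neighborhood. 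This will require an interpolation argument along each shared pizza zone, which is a closed perfect zone by Lemma \ref{depth}, so Lemma \ref{perfect-automorphism} supplies the ambient inner bi-Lipschitz automorphisms needed for the interpolation. Establishing that all such interpolations can be performed simultaneously and consistently, and verifying that the resulting glued map is globally bi-Lipschitz with respect to the outer metric, is where I expect the bulk of the remaining technical work to lie.
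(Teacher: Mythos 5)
The statement you are addressing is a \emph{conjecture} in the paper, not a theorem: the authors explicitly write that ``the proof needs some additional work,'' and no argument for it appears in the text. There is therefore no proof to compare your proposal against; the question is only whether your outline, if completed, would constitute one.

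Your strategy assembles the right ingredients — Theorem~\ref{pizza-theorem} to match minimal pizzas, Theorem~\ref{theorem:very-elementary} to reduce elementary pieces to graphs, and the invariants $\sigma$, $\tau$ to align the local correspondences — and you are candid that the gluing step is unresolved. That honesty is warranted: what you have is an outline, not a proof, and two points remain substantively undeveloped beyond the gap you acknowledge. First, the transversal slices. When $\mu_\ell(q)\equiv q$ the zone correspondence of Proposition~\ref{long-q-zone} (which requires $\mu(Z)<q$) gives no canonical identification with zones of $T'$, so ``the part of $T'$ paired with $T_\ell$ lies in zones already handled by the adjacent non-transversal slices'' is an assertion rather than an argument. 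One would have to show that the portion of $T'$ between the images of consecutive non-transversal slice zones (in the order $\tau$ prescribes, which is a permutation and need not be monotone) matches up exactly, and that Proposition~\ref{Prop:2.20-GS} applies to the resulting sub-pair — which needs a uniform lower bound $\alpha>\beta$ on contact order that is not automatic in the transversal case. Second, the gluing is not a purely local matter. As Remark~\ref{rem:tau} and Figure~\ref{fig:oneloop} show, a single pizza zone of $T$ can be assigned by $\tau$ to two distinct pizza zones of $T'$ (and conversely), so the local homeomorphisms on adjacent non-transversal slices may impose conflicting boundary conditions on the $T'$-side; the perfect-zone interpolation of Lemma~\ref{perfect-automorphism} lets you move arcs within a depth zone, but you must show the discrepancies are confined to those zones and that the composite remains \emph{outer} (not merely inner) bi-Lipschitz, i.e., that the interpolation does not distort the distance between $T$ and $T'$ beyond a bounded factor. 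Until these points are resolved, the proposal is a plausible plan of attack, consistent with the paper's stated expectation that additional work is needed, rather than a proof.
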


\end{document}